\renewcommand{\@biblabel}[1]{[#1]\hfill}
\let\NAT@parse\undefined
\pgfplotsset{compat=1.7}
\newcommand*{\rom}[1]{\expandafter\@slowromancap\romannumeral #1@}
\newtheorem{ass}{Assumption}
\newtheorem{lem}{Lemma}
\newtheorem{theorem}{Theorem}
\newtheorem{rem}{Remark}
\DeclareMathOperator*{\argmin}{arg\,min}
\let\oldTheorem\theorem
\renewcommand{\theorem}{\oldTheorem\normalfont}
\let\oldLemma\lem
\renewcommand{\lem}{\oldLemma\normalfont}
\let\oldCorollary\corollary
\renewcommand{\corollary}{\oldCorollary\normalfont}
\let\oldRemark\rem
\renewcommand{\rem}{\oldRemark\normalfont}
\let\oldAssumption\ass
\renewcommand{\ass}{\oldAssumption\normalfont}
\newcommand{\margin}[1]{\marginpar{\color{red}\tiny\ttfamily#1}}
\renewcommand{\margin}[1]{\marginpar{\color{red}\tiny\ttfamily}}
\title{\LARGE \textbf
Anytime Proximity Moving Horizon Estimation: \\Stability and Regret 
\footnote{This article is a slightly modified version of \cite{gharbi2020anytime}.}
}
\newcommand\rev[1]{{\color{black}{#1}}}
\begin{document}

\date{}
\author[1]{Meriem Gharbi}
\author[2]{Bahman Gharesifard}
\author[1]{Christian Ebenbauer}
\affil[1]{Chair of Intelligent Control Systems, RWTH Aachen University, Germany \protect\\ \texttt{\small $\lbrace$meriem.gharbi,christian.ebenbauer$\rbrace$@ic.rwth-aachen.de}}
\affil[2]{Electrical \& Computer Engineering Department, University of California, Los Angeles, USA \protect\\ \texttt{\small gharesifard@ucla.edu} \protect\\[2em]}

\maketitle

\begin{abstract}
\textbf{Abstract.} In this paper, we address the efficient implementation of moving horizon state estimation of constrained discrete-time  linear systems. We propose a novel iteration scheme which employs a proximity-based formulation of the underlying optimization algorithm and reduces computational effort by performing only a limited number of  optimization iterations each time a new measurement is received.  We outline conditions under which global exponential stability of the underlying estimation errors is ensured.  Performance guarantees  of the iteration scheme in terms of regret upper bounds are also established.  A combined result shows that both exponential stability and a sublinear regret which can be rendered smaller by increasing the number of optimization iterations can be guaranteed. The stability and regret results of the proposed estimator are showcased through numerical simulations.
\end{abstract}

\section{Introduction}
Moving horizon estimation (MHE) is an optimization-based state estimation approach that computes an estimate of the state of a dynamical system by using a finite number of the most recent measurements. More specifically, a suitable optimization problem is solved to compute the optimal estimate at each time instant and the horizon of measurements is shifted forward in time whenever a new measurement becomes available. Various MHE formulations have been proposed and investigated for stability and are by now well-established in state estimation area~\cite{rao2001constrained,bookDiehl,alessandri2003receding,sui2014linear,rao2003constrained,haseltine2005critical,rawlings2012optimization}. Practical issues related to the online solution of MHE has drawn special attention since the underlying optimization problem has to be solved online at each time instant. In order to overcome this computational burden, fast optimization strategies based on interior-point methods~\cite{jorgensen2004numerical,haverbeke2009structure} are proposed, however, with no theoretical guarantees. In~\cite{alessandri2008moving},  approximation schemes are considered, in which suboptimal solutions for minimizing quadratic cost functions with a given accuracy  are allowed and upper bounds on the estimation errors are derived under observability assumptions. However, no  optimization algorithm is specified.
 A similar convergence analysis is carried in \cite{zavala2010stability} for the MHE algorithm presented in~\cite{zavala2008fast}, where a nominal background problem is solved based on predicted future measurements and when the true measurement arrives, the actual state is computed using a fast online correction step. To show that the generated  estimation errors remain bounded, the associated approximate cost  and resulting suboptimality are taken into account in the analysis.  Particularly interesting are works which explicitly consider the dynamics of the optimization algorithm in the convergence analysis \cite{alessandri2017fast,alessandri2020fast}. 
 In~\cite{alessandri2017fast}, a fast MHE implementation is achieved by performing single or multiple iterations of gradient or Newton methods to minimize least-squares cost functions.  For linear systems, global exponential stability of the estimation errors is shown based on an explicit representation of the error dynamics. However, the required  observability assumption restricts the choice of the horizon length  and implies that it has to be greater than the state dimension.  Moreover, variants of the so-called real-time iteration scheme~\cite{kuhl2011real} which performs a single  Gauss-Newton iteration per time instant are proposed. 
 The local convergence results derived in~\cite{wynn2014convergence}  are established for the unconstrained case, i.e. no inequality constraints are considered, and hold under the assumptions of observability and a sufficiently small initial estimation  error.  
 Real-time implementations of MHE are also successfully carried out in real-world applications, such as structural vibration applications~\cite{abdollahpouri2017real}, induction machines~\cite{favato2019fast} and industrial separation processes~\cite{kupper2009efficient}.  However, theoretical studies that consider both stability as well as performance 
of MHE schemes under rather mild assumptions are to the best of our knowledge rarely addressed in the literature.

\textbf{Statement of contributions.} In this work, we present a novel MHE iteration scheme for constrained linear discrete-time systems, which is based on
the idea of proximity MHE (pMHE), recently introduced in~\cite{gharbi2018proximity,gharbi2020anIteration}. 
The pMHE framework exploits the advantages of a stabilizing a priori estimate from which stability can provably be inherited for any horizon length while allowing for a flexible design via rather general convex stage costs. \rev{In \cite{gharbi2018proximity}, stability is investigated under the assumption that a solution of the optimization problem is available at each time instant. In \cite{gharbi2020anIteration}, an unconstrained MHE problem in which the inequality constraints are incorporated into the cost function by means of so-called relaxed barrier functions is considered. Although only a limited number of iterations are executed at each time instant, the derived stability conditions tailored to this relaxed barrier function based formulation are rather conservative.}  The contribution  of this paper is fourfold. First, we present a pMHE iteration scheme where at each time instant, a limited number of optimization iterations are carried out and a state estimate is delivered in real-time. The underlying optimization algorithm consists of a proximal point algorithm~\cite{beck2003mirror} and is warm-started by a stabilizing a priori estimate constructed based on the Luenberger observer. Second, we establish global exponential stability  of the underlying estimation errors under minimal assumptions and by means of a Lyapunov analysis. In particular, the iteration scheme can be considered as an \textit{anytime} algorithm in which stability  is guaranteed   after any number of optimization algorithm iterations, including the case of a single iteration per time instant. \rev{Third, in contrast to the pMHE scheme in \cite{gharbi2018proximity}, the a priori estimate is only used to warm-start the proposed algorithm. Nevertheless, stability is inherited from it, despite its stabilizing effect is fading away with each iteration.} Forth, we study the performance of the pMHE iteration scheme by using the notion of regret, which is widely used in the field of online convex optimization to characterize performance~\cite{hazan2008adaptive,hazan2016introduction,hall2013dynamical}, and adapting it to our setting. More specifically, we define the regret as the difference of the accumulated costs generated by the iteration scheme relative to a comparator sequence and show that this regret can be upper bounded. Furthermore, we prove that, for any given comparator sequence, this bound can be rendered smaller by increasing the number of optimization iterations,  and that a constant regret bound can be derived for the special case of exponentially stable comparator sequences.  Overall, we present a novel anytime pMHE iteration scheme that is designed based on rather general convex stage cost functions, ensures stability after each iteration as well as for any horizon length,  and for which performance guarantees are provided and characterized in terms of rigorously derived  regret bounds.

\textbf{Organization.} The paper is  organized as follows. The constrained MHE problem for discrete-time linear systems is stated in Section \ref{sec: setup}. The proposed pMHE iteration scheme is described in details in Section \ref{sec: iteration scheme} and its stability properties are established in Section \ref{sec: stability}. In Section \ref{sec: regret}, the focus is on the performance properties of the iteration scheme which are reflected by the derived regret upper bounds. A simulation example that illustrates both the stability and performance properties is presented in Section \ref{sec: sim}. Finally, conclusions are drawn in Section \ref{sec: conclusion}.
\\
~\\
  \textit{Notation:} Let $\mathbb{N}_{+}$ denote the set of positive natural numbers, $\mathbb{R}_{+}$ and $\mathbb{R}_{+\!\!+}$  the sets of nonnegative real and positive real numbers, respectively, and $\mathbb{S}_{+}^n$ and $\mathbb{S}_{+\!\!+}^n$ the sets of symmetric positive semi-definite and positive-definite matrices of dimension $n \in \mathbb{N}_{+}$, respectively. For a vector $v\in \mathbb{R}^n$, let  $\left\Vert v \right\Vert_P \coloneqq \sqrt{v^{\top}P\,v}$  for any $P \in \mathbb{S}_{+}^n $.  Moreover, let $\mathbf{0} \coloneqq \begin{bmatrix} 0 & \cdots & 0 \end{bmatrix}^\top$. 
 \section{Problem setup and preliminaries}
 \label{sec: setup}
We consider the following discrete-time linear time-invariant (LTI) system 
 \begin{subequations}
 \label{system}
\begin{align}
x_{k+1}&=A\, x_k + B\, u_k  , \\ 
y_k &= C\,  x_k,
	\end{align}
\end{subequations} 
where $x_k  \in \mathbb{R}^n$ denotes the state vector, $u_k \in \mathbb{R}^m $ the input vector, and $y_k  \in \mathbb{R}^p$ the measurement vector.  We assume that the pair $(A,C)$ is detectable  and that the state satisfies polytopic constraints  
\begin{align}
x_k \in \mathcal{X}\coloneqq\left\lbrace x \in \mathbb{R}^n: C_{\text{x}} \, x \leq d_{\text{x}} \right\rbrace 
\end{align}
where $ C_{\text{x}} \in \mathbb{R}^{q_{\text{x}} \times n}$ and $d_{\text{x}} \in \mathbb{R}^{q_{\text{x}}}$ with $q_{\text{x}} \in \mathbb{N}_+$. We aim to compute an estimate of the state $x_k$ based on a moving horizon estimation scheme. More specifically, at each time instant $k$,  given the last $N$ measurements $\lbrace y_{k-N}, \cdots, y_{k-1} \rbrace$ and inputs $\lbrace u_{k-N}, \cdots, u_{k-1} \rbrace$, our goal is to find a solution  to the following optimization problem 
 \begin{subequations}
  \label{miniProx}
\begin{align}
&\min\limits_{\substack{\hat{x}_{k-N} ,\\ \hat{\mathbf{v}}, \hat{\mathbf{w}}  }} \quad \enskip \sum_{i=k-N}^{k-1} r\left(  \hat{v}_i \right) + q\left(\hat{w}_i \right) \label{N>1Unc} \\
&\enskip\text{s.t. } \hspace{0.75cm} \hat{x}_{i} = A\, \hat{x}_{i-1}+ B\, u_{i-1} +\hat{w}_{i-1},  \label{dnamics}\\
&\hspace{1.45cm} y_i=   C\, \hat{x}_i + \hat{v}_i, \label{dnamicsv} \\
&\hspace{1.45cm} \hat{x}_{i} \in \mathcal{X}, \hspace{1.5cm}  i= k-N, \cdots, k-1, \label{constraints}
\end{align}
\end{subequations} 
where $\hat{\mathbf{v}} =\lbrace     \hat{v}_{k-N}, \cdots, \hat{v}_{k-1}\rbrace $ and $\hat{\mathbf{w}} =\lbrace     \hat{w}_{k-N}, \cdots, \hat{w}_{k-1}\rbrace $ denote the output residual and the  model residual sequences over the estimation horizon with length $N \in \mathbb{N}_+$.  In~\eqref{N>1Unc}, the stage cost $r : \mathbb{R}^p \rightarrow \mathbb{R}$ is a 
convex function  which penalizes the output residual $  \hat{v}_i \in \mathbb{R}^p $, and the stage cost $q : \mathbb{R}^n \rightarrow \mathbb{R}$ is a convex function which penalizes the model residual $\hat{w}_i \in \mathbb{R}^n$. By using the system dynamics~\eqref{dnamics} and \eqref{dnamicsv}, we can  express each  output residual $  \hat{v}_i $ in terms of the remaining decision variables $\lbrace \hat{x}_{k-N}, \hat{\mathbf{w}}  \rbrace$, which we collect in the vector 
\begin{equation}\label{eq:z-def}
\hat{\mathbf{z}}_k \coloneqq \begin{bmatrix}
	 \hat{x}_{k-N} \\  \hat{w}_{k-N}\\[-0.3em]\vdots  \\[-0.3em]\hat{w}_{k-1}    \end{bmatrix} \in \mathbb{R}^{(N+1)n}
\end{equation}
and use it to reformulate problem  (\ref{miniProx})  as
\begin{subequations}
\label{condensed}
\begin{align}
\label{N>1UncCom}
&\min_{\hat{\mathbf{z}}_k}  \qquad f_k\left( \hat{\mathbf{z}}_k\right)   
\\& \enskip \text{s.t. }  \qquad \hat{\mathbf{z}}_k  \in \mathcal{S}_k  \label{constraintsCompact}.
\end{align}
\end{subequations}
Here,  the convex function $f_k: \mathbb{R}^{(N+1)n} \rightarrow \mathbb{R}$ denotes the sum of stage costs  and the convex set $ \mathcal{S}_k \subset  \mathbb{R}^{(N+1)n} $ represents the (stacked) state constraints given by 
\begin{align}
\label{stackedConstr}
 \mathcal{S}_k = \left\lbrace \mathbf{z} = \begin{bmatrix}x \\ \mathbf{w} \end{bmatrix}, x \in \mathbb{R}^n, \mathbf{w}\in \mathbb{R}^{Nn}:  G\, x + F\,   \mathbf{w} \leq E_k\right\rbrace.   
\end{align}
 Note that $ \mathcal{S}_k$ is time-dependent due to the changing input sequence $\lbrace u_{k-N}, \cdots, u_{k-1} \rbrace$ that enters $E_k$ over time. The matrices $G$, $F$ and the vector $E_k$ as well as more details on the reformulation of the estimation problem (\ref{miniProx}) to (\ref{condensed}) can be found in Appendix  \ref{app:condensed}.
Within the proximity-based formulation, as introduced in~\cite{gharbi2018proximity,gharbi2019proximity}
and related to \cite{sui2014linear}, we solve a regularized form of   (\ref{condensed})  in which we add to the cost function~\eqref{N>1UncCom}  a proximity measure to a stabilizing a priori estimate, which we refer to as $\bar{\mathbf{z}}_{k} \in  \mathbb{R}^{(N+1)n}$.  A corresponding pseudo-code is given in Algorithm \ref{alg: optimal pMHE}.
\begin{algorithm} 
\caption{pMHE according to \cite{gharbi2018proximity}}
\label{alg: optimal pMHE}
\begin{algorithmic}[1]
\State  \textbf{Initialize:}   Choose  $ \hat{x}_0  $ and set $\bar{\mathbf{z}}_0  =\hat{x}_0$ 
  \For {$k=1,2, \cdots$  } \vspace{0.2cm} 
	\State   \vspace{-0.67cm}   \begin{align} 
  \label{condensedpMHE}
 \hspace{-1.7cm} \mathbf{\hat{z}}^*_{k}=\argmin_{\quad\, \hat{\mathbf{z}}_k\in  \mathcal{S}_k}\left\lbrace f_k\left( \hat{\mathbf{z}}_k \right)   + D_\psi \left( \hat{\mathbf{z}}_k, \bar{\mathbf{z}}_{k} \right)  \right\rbrace   
   \end{align}
		\State obtain $\hat{x}_k$ according to (\ref{StateEstOpt}) 
		\State  $\mathbf{\bar{z}}_{k+1} =\Phi_k \left( \mathbf{\hat{z}}^*_{k} \right)$  
		\EndFor
\end{algorithmic}
\end{algorithm}
~\\In (\ref{condensedpMHE}), the overall cost function is strictly convex and     
\[
D_\psi : \mathbb{R}^{(N+1)n} \times \mathbb{R}^{(N+1)n} \rightarrow \mathbb{R} 
\]
denotes the Bregman distance induced from a continuously differentiable and strongly convex function $\psi : \mathbb{R}^{(N+1)n}   \rightarrow \mathbb{R} $ as 
 \begin{align}
 \label{BregmanDef}
D_\psi(\mathbf{z}_1,\mathbf{z}_2)= \psi(\mathbf{z}_1) - \psi (\mathbf{z}_2)  -  (\mathbf{z}_1-\mathbf{z}_2)^\top \nabla \psi(\mathbf{z}_2).
 \end{align}
 More detail on Bregman distances as well as some of their central properties can be found in Appendix \ref{app: Bregman}. Based on the resulting pMHE solution $ \mathbf{\hat{z}}^*_{k}$, the state estimate $\hat{x}_k$ is obtained via a forward prediction of the dynamics (\ref{dnamics}):
 \begin{align}
		\label{StateEstOpt}
\hat{x}_k =A^N\, \hat{x}^{*}_{k-N}+ \!\!\sum_{j=k-N} ^ {k-1}\! \! A^{k-1-j}\, \left( B\,   u_j + \hat{w}^{*}_{j} \right),
\end{align}
and the stabilizing a priori estimate $\bar{\mathbf{z}}_{k+1}$ is computed using the operator $\Phi_k : \mathbb{R}^{(N+1)n} \rightarrow \mathbb{R}^{(N+1)n}$.   While performance of  pMHE can be enforced with rather general convex stage costs $r$ and $q$, stability  can be ensured for any horizon length $N \geq1$  with an appropriate choice of the a priori estimate operator $\Phi_k$ and the
Bregman distance \cite{gharbi2018proximity}. Furthermore, among many interesting properties, Bregman distances can adapt to the problem at hand and act as a barrier for the constraint set, in particular with so-called relaxed barrier functions \cite{gharbi2020anIteration}.

In the following section, we present an iteration scheme to pMHE, in which,  rather than finding the pMHE solution at each time instant $k$, we reduce the computation time by executing only a finite number of optimization iterations of a gradient type algorithm.

\section{Anytime pMHE Algorithm}
\label{sec: iteration scheme}
In this section, we propose a novel pMHE iteration scheme in which, at each time instant $k$, problem (\ref{condensed}) is approximately solved by executing  a fixed number $\mathrm{it}(k) \in \mathbb{N}_+$ of optimization algorithm iterations. In more details, at each time $k$, a suitable warm start $\mathbf{\hat{z}}^{0}_{k}$ is generated from a stabilizing a priori estimate $\mathbf{\bar{z}}_k$ and an iterative optimization update is  carried out, from which the sequence   $\big\lbrace  \mathbf{\hat{z}}^{i}_{k}   \big\rbrace $ with $i=1,\cdots,\mathrm{it}(k)$ is obtained. The steps of the scheme are given in Algorithm~\ref{alg: eta changes} and illustrated in Figure \ref{Fig: iteration scheme}.
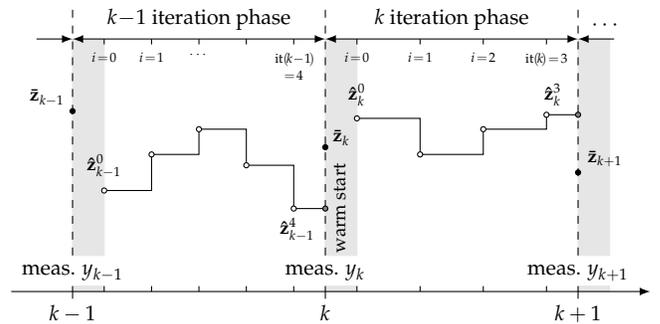
\begin{figure}[ht]
  \centering
\begin{tikzpicture}[scale=0.48,domain=-2:8.5]


\fill[draw=gray!20,color=gray!20] (0,2) rectangle (0.875, 8) node[pos=0.22, xshift=1mm ,font=\scriptsize ,rotate=90,
] {\color{black}warm start};
\fill[draw=gray!20,color=gray!20] (0,1) rectangle (0.875, 1.15);
\fill[draw=gray!20,color=gray!20] (-7,2) rectangle (-6.125, 8) node[pos=0.22, xshift=1mm ,font=\scriptsize ,rotate=90,
] {};
\fill[draw=gray!20,color=gray!20] (-7,1) rectangle (-6.125, 1.15);
\fill[draw=gray!20,color=gray!20] (7,2) rectangle (7.875, 8) node[pos=0.22, xshift=1mm ,font=\scriptsize ,rotate=90,
] {};
\fill[draw=gray!20,color=gray!20] (7,1) rectangle (7.875, 1.15);

\draw[->,latex-latex] (0,8) --   node[above] {\footnotesize{$k$  iteration phase}}  (7,8);
\draw[->,latex-latex] (-7,8) --   node[above] {\footnotesize{$k\!-\!1$ iteration phase}}  (0,8);
\draw[->,latex- ] (7,8) --   node[above] {\footnotesize{$\quad \cdots$}}  (8,8);
\draw[->,-latex] (-8,8) --   node[above] {}  (-7,8);

\draw[->,-latex] (-8.7,1) -- (9,1);
\foreach \x in {-7,0,7}
    \draw (\x,1.15) -- (\x,0.8 );
\foreach \x in {0.875, 2.625, 4.375, 6.125}
   { \draw (\x,1.15) -- (\x,1 );
    \draw (\x,7.85) -- (\x,8 ) ; } ;     
    \foreach \x in {-6.125, -4.8125, -3.5, -2.1875,-0.875}
   { \draw (\x,1.15) -- (\x,1 );
    \draw (\x,7.85) -- (\x,8 ) ; } ; 
\draw (-7,0.9) node[anchor=north,font=\footnotesize] {$k-1$};
\draw (-7,2.0 ) node[anchor=north,font=\footnotesize] {meas. $y_{k-1}$};
\draw (0,0.9) node[anchor=north,font=\footnotesize] {$k$};
\draw (0,2.0 ) node[anchor=north,font=\footnotesize] {meas. $y_k$};
\draw (7,0.9) node[anchor=north,font=\footnotesize] {$k+1$};
\draw (7,2.0 ) node[anchor=north,font=\footnotesize] {meas. $y_{k+1}$};

\draw (0.875,7.9) node[anchor=north,font=\tiny] {$i\!=\!0$};
\draw (2.625,7.9) node[anchor=north,font=\tiny ] {$i\!=\!1$};
\draw ( 4.375,7.9) node[anchor=north,font=\tiny ] {$i\!=\!2$};
 \draw (6.125,7.9) node[anchor=north,font=\tiny ] {$\mathrm{it}(\!k\!)\!=\!3$};
 \draw (-6.125,7.9) node[anchor=north,font=\tiny] {$i\!=\!0$};
\draw (-4.8125,7.9) node[anchor=north,font=\tiny ] {$i\!=\!1$};
\draw (-0.875,7.9) node[anchor=north,font=\tiny ] {$\mathrm{it}(\!k\!-\!1\!)\!$};
\draw (-0.875,7.4) node[anchor=north,font=\tiny ] {$=\!4$};
\draw (-3.5,7.9) node[anchor=north,font=\tiny ] {$\cdots$};

\draw[-,dashed] (-7,2) -- (-7,9)    ;
\draw[-,dashed] (0,2) -- (0,9)    ;
\draw[-,dashed] (7,2) -- (7,9)    ;

\draw[color=black ] plot [ mark=*, mark options={fill=white},mark repeat=2 ] coordinates { 
(0.875,5.8) (2.625,5.8) (2.625,4.8) (4.375,4.8) (4.375,5.5) (6.125,5.5) (6.125,5.9)  (7,5.9)   };
\draw[color=black ] plot [ mark=*, mark options={fill=gray}  ] coordinates { 
 (7,5.9)    };
 \draw[color=black ] plot [ mark=*, mark options={fill=black}  ] coordinates { 
 (7,4.3)    };
\draw[color=black] (0.875,5.9) node[anchor=south,font=\scriptsize] {$\mathbf{\hat{z}}_k^0$};
\draw[color=black] (6.3,5.9) node[anchor=south,font=\scriptsize] {$\mathbf{\hat{z}}_k^{3}$};
\draw[color=black] (7.75,4.2) node[anchor=south,font=\scriptsize] {$\mathbf{\bar{z}}_{k+1} $};

\draw[color=black ] plot [ mark=*, mark options={fill=white},mark repeat=2 ] coordinates { 
(-6.125,3.8) (-4.8125,3.8) (-4.8125,4.8) (-3.5,4.8) (-3.5,5.5) (-2.1875,5.5) (-2.1875,4.5)  (-0.875,4.5) (-0.875,3.3) (0,3.3)   };
 \draw[color=black ] plot [ mark=*, mark options={fill=gray}  ] coordinates { 
 (0,3.3)    };
 \draw[color=black ] plot [ mark=*, mark options={fill=black}  ] coordinates { 
 (0,5)     };
  \draw[color=black ] plot [ mark=*, mark options={fill=black}  ] coordinates { 
  (-7, 6)    };
\draw[color=black] (-6.125,3.9) node[anchor=south,font=\scriptsize] {$\mathbf{\hat{z}}_{k-1}^0$};
\draw[color=black] (-0.78,3.3) node[anchor=north,font=\scriptsize] {$\mathbf{\hat{z}}_{k-1}^{4}$};
\draw[color=black] (0.45,4.8) node[anchor=south,font=\scriptsize] {$\mathbf{\bar{z}}_{k} $};
\draw[color=black] (-7.7,5.9) node[anchor=south,font=\scriptsize] {$\mathbf{\bar{z}}_{k-1} $};

\end{tikzpicture}
\caption{Illustration of the steps of the pMHE iteration scheme at the time instants $k-1$ and $k$, with the corresponding $\mathrm{it}(k-~1)=4$  and $\mathrm{it}(k)=3$ optimization iterations. }
\label{Fig: iteration scheme}
\end{figure}
~\\Before we explain the proposed  algorithm in more detail, and for the sake of clarity, let us first introduce  some notations. The index $k$ denotes the time instant in which we receive a new measurement and $\mathrm{it}(k)$ is the number of iterations of the optimization algorithm between time instants $k$ and $k+1$. Moreover, we introduce
\begin{align}
\label{notation}
\hspace{-0.4cm}\mathbf{\hat{z}}^{ i}_{k} \coloneqq \begin{bmatrix}
				 \hat{x}^{ i}_{k-N} \\[0.2em]  \hat{w}_{k-N}^{ i} \\\vdots  \\[0.2em]  \hat{w}_{k-1}^{ i}    \end{bmatrix}, \enskip
 \mathbf{\bar{z}}_k \coloneqq \begin{bmatrix}
 \bar{x}_{k-N}\\[0.2em]  \bar{w}_{k-N}\\\vdots  \\[0.2em]  \bar{w}_{k-1}
 \end{bmatrix}, \enskip
     \mathbf{z}_k \coloneqq \begin{bmatrix}
				 x_{k-N}  \\[0.21em] 0 \\\vdots  \\[0.21em]  0  \end{bmatrix}.
 \end{align} 
  With $\mathbf{\hat{z}}^{ i}_{k} $, we denote the $i$-th iterate of the optimization algorithm at time $k$.
With $ \mathbf{\bar{z}}_k $, we refer to the a priori estimate at time $k$ and with   $ \mathbf{z}_k$ 	 to the true state $x_{k-N}$ with true model residual sequence    $  \lbrace  0  , \cdots , 0  \rbrace $. 

Upon arrival of a new measurement at time $k$, the optimization algorithm is initialized based on the a priori estimate~$\mathbf{\bar{z}}_k$.  In particular, we compute the warm start $\mathbf{\hat{z}}^{0}_{k}$ as the Bregman projection  of $\mathbf{\bar{z}}_k$ onto the constraint set $ \mathcal{S}_k$ as formulated in line~3 of Algorithm \ref{alg: eta changes}.
Then, a fixed number $\mathrm{it}(k)$ of optimization iterations is performed via (\ref{optAdap}), generating $\big\lbrace  \mathbf{\hat{z}}^{1}_{k}, \cdots,\mathbf{\hat{z}}^{\mathrm{it}(k)}_{k} \big\rbrace $. Here, $\eta_k^i >0$ denotes the step size employed at the $i$-th iteration at time $k$. From this sequence of iterates, an arbitrary iterate  $\mathbf{\hat{z}}^{\mathrm{j}(k)}_{k} $, $\mathrm{j}(k) \in \lbrace 0,\cdots,\mathrm{it}(k)\rbrace$ can be chosen, based on which the state estimate $\hat{x}_k$ is obtained using 
\begin{align}
		\label{StateEst}
\hat{x}_k =A^N\, \hat{x}^{\mathrm{j}(k)}_{k-N}+ \!\!\sum_{j=k-N} ^ {k-1}\! \! A^{k-1-j}\, \left( B\,  u_j+ \hat{w}^{\mathrm{j}(k)}_{j} \right)
\end{align}
for $k>N$ (see Remark \ref{rem: indices} for the case where $0< k \leq N$). Moreover, the a priori estimate $ \mathbf{\bar{z}}_{k+1} $ for the next time instant is computed through the operator $\Phi_k : \mathbb{R}^{(N+1)n} \rightarrow \mathbb{R}^{(N+1)n}$ which will be defined in~\eqref{Adap: warm}.  As mentioned above, the basic idea of the  pMHE framework is to use the Bregman distance~$D_\psi$ as a proximity measure to a  stabilizing a priori estimate in order to inherit its  stability properties. Since the Luenberger observer  appears as a simple candidate for constructing the a priori estimates,  we require that the operator $\Phi_k$ incorporates its dynamics as follows:
			    \begin{align}
			     \label{Adap: warm}
\Phi_k \left( \mathbf{\hat{z}}^{ \mathrm{j}(k)}_{k} \right) \! \coloneqq  \!  \begin{bmatrix}
   \! A\,  \hat{x}_{k-N}^{ \mathrm{j}(k)} +\!B\, u_{k-N}+\!L \left(y_{k-N} - C  \hat{x}_{k-N}^{ \mathrm{j}(k)} \right)     \\\mathbf{0}
     \end{bmatrix}\!,
       \end{align}	
where $\mathbf{0}\in \mathbb{R}^{Nn}$. Here, the observer gain $L$ is chosen such that all the eigenvalues of $A-LC$ are strictly within the unit circle. 
 \begin{algorithm} [t]
\caption{Anytime pMHE}
\label{alg: eta changes}
\begin{algorithmic}[1]
\State  \textbf{Initialize:}   Choose  $ \hat{x}_0  $ and set $ \bar{\mathbf{z}}_0  =\hat{x}_0$ 
  \For {$k=1,2, \cdots$  }
    \State  $ \mathbf{\hat{z}}_k^0= \argmin\limits_{\quad\enskip\mathbf{z}\in  \mathcal{S}_k} \quad  D_\psi (\mathbf{z},\mathbf{\bar{z}}_k)$  warm start
      \For {$i= 0,\dots, \mathrm{it}(k)-1$ } optimizer update
      \begin{align}
  \label{optAdap}
\hspace*{-0.2cm}  \mathbf{\hat{z}}^{i+1}_{k}=\argmin_{\quad\enskip\mathbf{z}\in  \mathcal{S}_k}\left\lbrace  \eta_k^i\,    \nabla f_k\left(  \mathbf{\hat{z}}^{i}_{k} \right)^\top \mathbf{z}+ D_\psi (\mathbf{z},  \mathbf{\hat{z}}^{i}_{k}) \right\rbrace   
   \end{align}
		\EndFor
		\State for some $\mathrm{j}(k) \in \lbrace 0,\cdots,\mathrm{it}(k)\rbrace$ obtain $\hat{x}_k$ from (\ref{StateEst})
		\State  $\mathbf{\bar{z}}_{k+1} =\Phi_k \left( \mathbf{\hat{z}}^{\mathrm{j}(k)}_{k} \right)$  
		\EndFor
\end{algorithmic}
\end{algorithm}
In the following, we compare Algorithm~\ref{alg: eta changes} with our earlier formulation of pMHE, given in  Algorithm~\ref{alg: optimal pMHE}.  
 Observe that, while a solution of the optimization problem  (\ref{condensedpMHE}) is computed,  step~4 in Algorithm \ref{alg: eta changes} employs the so-called  mirror descent algorithm~\cite{beck2003mirror} that iterates~\eqref{optAdap}  until  a given number of iterations $\mathrm{it}(k)$ is achieved. For $D_\psi \left( \mathbf{z}_1, \mathbf{z}_2 \right) = \frac{1}{2} \left\Vert \mathbf{z}_1 -  \mathbf{z}_2 \right\Vert^2$ and $ \mathcal{S}_k=\mathbb{R}^{(N+1)n}$, the optimizer update step (\ref{optAdap}) corresponds to an iteration step of the classical gradient descent algorithm and hence step 4 
 can be executed very quickly. 
In the constrained case, (\ref{optAdap}) can be regarded as a generalization of the projected gradient algorithm \cite{beck2003mirror}.  For this reason, we can view Algorithm  \ref{alg: eta changes} as a real-time version of the pMHE scheme given  in  Algorithm~\ref{alg: optimal pMHE}. \rev{Note also that choosing the so-called Kullback-Leiber divergence  as Bregman distance yields the efficient entropic descent algorithm if the constraint set is given by the unit simplex  \cite{beck2003mirror}.} An appealing feature of  Algorithm \ref{alg: eta changes} is that, depending on the available computation time between two subsequent time instants $k$ and $k+1$, the user can specify a maximum number of iterations $\mathrm{it}(k)$ after which   the optimization algorithm at time $k$ has to return a solution. \\
Another key difference between the two algorithms is that Algorithm \ref{alg: optimal pMHE}  is biased by the stabilizing a priori estimate $\bar{\mathbf{z}}_k$, while this bias  is fading away  in  Algorithm \ref{alg: eta changes}. In other words, the a priori estimate constructed based on the Luenberger observer (\ref{Adap: warm}) has less impact at each optimization iteration, which improves the performance of the pMHE iteration scheme.  
This is due to the fact that $\bar{\mathbf{z}}_k$
might  degenerate  performance in Algorithm \ref{alg: optimal pMHE}, since
the  solution lies in proximity to the a priori estimate.
In Algorithm \ref{alg: eta changes}, however, the Luenberger observer enters
only in the warm start. From this point of view,
it is quite surprising that, even though the effect of this stabilizing ingredient is fading away, stability is provably preserved, as we will show in the subsequent section. Thus, this "implicit stabilizing regularization" 
approach of the a priori estimate 
is in contrast to the explicit stabilizing regularization proposed in \cite{sui2014linear,gharbi2018proximity} (see also \cite{belabbas2020implicit}).
Moreover, the proposed MHE algorithm possesses the anytime property.
  The \emph{anytime} property refers to the fact that the algorithm will yield stable estimation errors after any number of  optimization algorithm iterations. This is similar in spirit to anytime model predictive control (MPC) algorithms, which compute stabilizing  control inputs after any  optimization iteration \cite{feller2017stabilizing,feller2018sparsity}.
\begin{rem} \label{rem: indices} For $k \leq N$, we can employ the steps of Algorithm \ref{alg: eta changes} by setting all the negative indices to zero. More specifically,   
$  \mathbf{\hat{z}}^{ i}_{k} =\begin{bmatrix} (\hat{x}^{ i}_{0})^\top &  (\hat{w}_{0}^{ i})^\top & \hdots  & (\hat{w}_{k-1}^{ i})^\top    \end{bmatrix}^\top$ and  $\mathbf{\bar{z}}_{k}  =\begin{bmatrix} \hat{x}_{0}^\top &  0 & \hdots  & 0 \end{bmatrix}^\top$ where $\mathbf{\hat{z}}^{ i}_{k},\mathbf{\bar{z}}_{k}   \in \mathbf{R}^{(k+1)n} $. In order to compute the state estimate, \eqref{StateEst} has to be explicitly modified to
 \begin{align}
\hat{x}_k =A^k\, \hat{x}^{ \mathrm{j}(k)}_{0}+  \sum_{j=0} ^ {k-1}  A^{k-1-j}\,\left( B\,   u_j + \hat{w}^{ \mathrm{j}(k)}_{j} \right).
\end{align}
 \end{rem}
We impose the following assumptions. 
\begin{ass}[Properties of $ \mathcal{S}_k$] \label{ass: S closed}
 The  set $ \mathcal{S}_k$ of constraints is closed and convex with nonempty interior.
\end{ass}
  \begin{ass}[Convexity of $f_k$] \label{ass: f convex}
The sum of stage costs  $f_k$ is continuously differentiable,  convex for all $k> 0$, and achieves its minimum at $\mathbf{z}_k$. 
   \end{ass}
     \begin{ass}[Strong smoothness of $f_k$] \label{ass: f strngSmooth}
The sum of stage costs  $f_k$ is strongly smooth with constant $L_f>0 $:
\begin{align}
 f_k(\mathbf{z}_2) \leq  f_k(\mathbf{z}_1)  +\nabla f_k(\mathbf{z}_1)^\top   ( \mathbf{z}_2 -\mathbf{z}_1)  + \frac{L_f}{2} \Vert  \mathbf{z}_1 - \mathbf{z}_2  \Vert^2 
\end{align}
for all $\mathbf{z}_1, \mathbf{z}_2 \in \mathbb{R}^{(N+1)n}  $ and $k>0$.
   \end{ass}
 \begin{ass} [Strong convexity and smoothness of $D_\psi$] \label{ass: Bregman}  The function $\psi$ is continuously differentiable, strongly convex  with constant $\sigma>0$ and strongly smooth with constant  $\gamma>0$, which implies the following for the Bregman distance  
 \begin{align}
 \frac{\sigma}{2 } \Vert  \mathbf{z}_1 - \mathbf{z}_2  \Vert^2  \leq   D_\psi (\mathbf{z}_1, \mathbf{z}_2 ) \leq  \frac{\gamma}{2} \Vert  \mathbf{z}_1 - \mathbf{z}_2  \Vert^2 
 \end{align}
 for all $\mathbf{z}_1, \mathbf{z}_2 \in \mathbb{R}^{(N+1)n}  $.
   \end{ass}
In Assumption \ref{ass: f convex}, we can ensure that $f_k$ achieves its minimum at $\mathbf{z}_k$ by designing the stage costs $r(\cdot)$ and $q(\cdot)$ such that their corresponding minimum is achieved at zero. \rev{Requiring strong smoothness of $f_k$ in Assumption \ref{ass: f strngSmooth} is	rather customary in the analysis of first-order optimization algorithms \cite{mokhtari2016online} and will prove central in the subsequent theoretical studies of the pMHE iteration scheme. Notice that Assumption \ref{ass: Bregman} restricts the  class of employed  Bregman distances to functions which can be quadratically lower and upper bounded. Obviously, this includes the important special case of quadratic distances which are widely used as prior weighting in the MHE literature in order to ensure stability of the estimation error. In addition,we can use any Bregman distance $D_{\psi}$ constructed based on the function  $\psi(\mathbf{z})= \frac{1}{2}\Vert \mathbf{z} \Vert_{P}^2 + B(\mathbf{z})$, where $B:\mathbb{R}^{(N+1)n} \rightarrow \mathbb{R}$ is a convex and strongly smooth function, i.e., a convex function whose gradient is Lipschitz continuous. }

\section{Stability Analysis}
\label{sec: stability}
In this section, we analyze the stability properties of the proposed pMHE iteration scheme (Algorithm \ref{alg: eta changes}). 
More specifically, we derive sufficient conditions on the Bregman distance~$D_\psi$ and on the step sizes $\eta_k^i$  for the global exponential stability (GES) of the estimation error 
\begin{align}
\label{EstimError}
 e_{k-N} \coloneqq x_{k-N} - \hat{x}_{k-N}^{j}  
\end{align}
for any $j \in \{0,...,\mathrm{it}(k)\}.$
 The following key result establishes the stability properties of the pMHE iteration scheme.
 \begin{theorem}
\label{prop: Stabadap}
 Consider  Algorithm \ref{alg: eta changes}    and suppose that Assumptions~\ref{ass: S closed}-\ref{ass: Bregman} hold. If we choose the Bregman distance $D_\psi$ such that
 \begin{align}
 \label{DiffDBoundadap}
D_\psi\left(\Phi_k(\mathbf{z}), \Phi_k(\mathbf{\hat{z}})\right) - D_\psi(\mathbf{z},\mathbf{\hat{z}})  \leq  - c \, \Vert \mathbf{z} - \mathbf{\hat{z}}   \Vert^2,
\end{align}
is satisfied for all $ \mathbf{z},\mathbf{\hat{z}} \in \mathbb{R}^{(N+1)n}$, where $ \Phi_k(\cdot) $ is defined in~\eqref{Adap: warm}, and if the step size at the $i$-th iteration and time instant $k$ satisfies 
\begin{align}
\label{stepSizeadap}
\eta_k^i \leq \frac{ \sigma }{L_f},
\end{align}
then  the estimation error \eqref{EstimError} is GES.
\end{theorem}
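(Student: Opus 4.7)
My strategy is a Lyapunov argument with candidate $V_k \coloneqq D_\psi(\mathbf{z}_k,\mathbf{\bar{z}}_k)$: I will show that $V_k$ contracts geometrically from one time instant to the next, and then translate this into an exponential bound on $\|e_{k-N}\|$ via the strong convexity in Assumption~\ref{ass: Bregman} together with the fact that $e_{k-N}$ is the first block of $\mathbf{z}_k-\mathbf{\hat{z}}^{\mathrm{j}(k)}_k$. Two intermediate monotonicity facts will be needed to close the loop: the warm start in line~3 of Algorithm~\ref{alg: eta changes} does not enlarge the Lyapunov value, and neither does any of the inner optimization iterations.

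For the warm start, since $\mathbf{\hat{z}}^0_k$ is the Bregman projection of $\mathbf{\bar{z}}_k$ onto the closed convex set $\mathcal{S}_k$ (Assumption~\ref{ass: S closed}) and the true $\mathbf{z}_k$ lies in $\mathcal{S}_k$, the generalized Pythagorean inequality for Bregman projections gives $D_\psi(\mathbf{z}_k,\mathbf{\hat{z}}^0_k)\le V_k$. For the inner iterations, I will combine the first-order optimality condition of \eqref{optAdap} with the three-point identity for Bregman distances to obtain
\[
\eta_k^i \langle \nabla f_k(\mathbf{\hat{z}}^{i}_k),\mathbf{\hat{z}}^{i+1}_k-\mathbf{z}_k\rangle \le D_\psi(\mathbf{z}_k,\mathbf{\hat{z}}^{i}_k) - D_\psi(\mathbf{z}_k,\mathbf{\hat{z}}^{i+1}_k) - D_\psi(\mathbf{\hat{z}}^{i+1}_k,\mathbf{\hat{z}}^{i}_k).
\]
Splitting the inner product as $\langle\nabla f_k(\mathbf{\hat{z}}^{i}_k),\mathbf{\hat{z}}^{i}_k-\mathbf{z}_k\rangle+\langle\nabla f_k(\mathbf{\hat{z}}^{i}_k),\mathbf{\hat{z}}^{i+1}_k-\mathbf{\hat{z}}^{i}_k\rangle$, invoking convexity of $f_k$ on the first term and strong smoothness on the second, and then absorbing the overshoot $\tfrac{\eta_k^i L_f}{2}\|\mathbf{\hat{z}}^{i+1}_k-\mathbf{\hat{z}}^{i}_k\|^2$ into $D_\psi(\mathbf{\hat{z}}^{i+1}_k,\mathbf{\hat{z}}^{i}_k)\ge\tfrac{\sigma}{2}\|\mathbf{\hat{z}}^{i+1}_k-\mathbf{\hat{z}}^{i}_k\|^2$ via the step-size constraint \eqref{stepSizeadap}, reduces the bound to $\eta_k^i\bigl(f_k(\mathbf{\hat{z}}^{i+1}_k)-f_k(\mathbf{z}_k)\bigr)\le D_\psi(\mathbf{z}_k,\mathbf{\hat{z}}^{i}_k)-D_\psi(\mathbf{z}_k,\mathbf{\hat{z}}^{i+1}_k)$. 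Because Assumption~\ref{ass: f convex} makes the left-hand side nonnegative, the Bregman distance to $\mathbf{z}_k$ is non-increasing along the iterations, hence $D_\psi(\mathbf{z}_k,\mathbf{\hat{z}}^{\mathrm{j}(k)}_k)\le V_k$ for every admissible index $\mathrm{j}(k)$.

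To close the loop across time, I will exploit the crucial identity $\Phi_k(\mathbf{z}_k)=\mathbf{z}_{k+1}$, which holds because $y_{k-N}=Cx_{k-N}$ makes the Luenberger innovation in \eqref{Adap: warm} vanish when its argument is the true state. Applying hypothesis \eqref{DiffDBoundadap} with $\mathbf{z}=\mathbf{z}_k$, $\mathbf{\hat{z}}=\mathbf{\hat{z}}^{\mathrm{j}(k)}_k$ and combining it with the upper bound $\|\mathbf{z}_k-\mathbf{\hat{z}}^{\mathrm{j}(k)}_k\|^2\ge \tfrac{2}{\gamma}D_\psi(\mathbf{z}_k,\mathbf{\hat{z}}^{\mathrm{j}(k)}_k)$ from the strong-smoothness side of Assumption~\ref{ass: Bregman} gives $V_{k+1}\le(1-\tfrac{2c}{\gamma})D_\psi(\mathbf{z}_k,\mathbf{\hat{z}}^{\mathrm{j}(k)}_k)\le(1-\tfrac{2c}{\gamma})V_k$, so $V_k$ decays geometrically. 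Finally $\|e_{k-N}\|^2\le\|\mathbf{z}_k-\mathbf{\hat{z}}^{\mathrm{j}(k)}_k\|^2\le\tfrac{2}{\sigma}V_k$ yields GES. I expect the delicate step to be the cancellation inside the inner-iteration analysis: the threshold $\sigma/L_f$ in \eqref{stepSizeadap} is exactly what is needed to dominate the smoothness residual $\tfrac{\eta_k^i L_f}{2}\|\mathbf{\hat{z}}^{i+1}_k-\mathbf{\hat{z}}^{i}_k\|^2$ by the Bregman remainder, so the monotonicity along iterates is tight and any larger step would jeopardize the anytime stability claim.
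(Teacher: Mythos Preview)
Your argument is correct and runs on the same rails as the paper's proof: the inner-iteration monotonicity you derive is exactly the content of the paper's Lemma~\ref{lem: Stabadap} (optimality condition of \eqref{optAdap} + three-point identity + strong smoothness of $f_k$ + strong convexity of $D_\psi$, with \eqref{stepSizeadap} making the residual nonpositive), the warm-start step is the paper's use of Lemma~\ref{lem:bregmanProj}, and the time-update step hinges on the same observation $\Phi_k(\mathbf{z}_k)=\mathbf{z}_{k+1}$ combined with hypothesis \eqref{DiffDBoundadap}. The one genuine difference is your choice of Lyapunov candidate: the paper tracks $D_\psi(\mathbf{z}_k,\mathbf{\hat{z}}^{\mathrm{j}(k)}_k)$ and shows $\Delta V\le -c\|\mathbf{z}_k-\mathbf{\hat{z}}^{\mathrm{j}(k)}_k\|^2$, whereas you track $V_k=D_\psi(\mathbf{z}_k,\mathbf{\bar{z}}_k)$ and sandwich $D_\psi(\mathbf{z}_k,\mathbf{\hat{z}}^{j}_k)\le V_k$ for every $j$. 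Both yield the same contraction factor $1-\tfrac{2c}{\gamma}$. Your choice has the minor advantage that the anytime claim (GES for \emph{every} $j\in\{0,\dots,\mathrm{it}(k)\}$) falls out immediately, since $V_k$ does not depend on which iterate is selected at time $k$; the paper instead first proves GES for $j=\mathrm{j}(k)$ and then runs a short second argument to extend to arbitrary $j$.
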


To prove this theorem, we require the following result.
  \begin{lem}
  \label{lem: Stabadap}
   Consider  Algorithm \ref{alg: eta changes}  and suppose Assumptions  \ref{ass: S closed}-\ref{ass: Bregman} hold true.   Then, for two consecutive iterates  $\mathbf{\hat{z}}_{k}^{i}$ and $\mathbf{\hat{z}}_{k}^{i+1}$
   at any time instant $k>0$, we obtain  
           \begin{align}
        \label{result: Stabadap}
D_\psi \big(\mathbf{z}_k, \mathbf{\hat{z}}_{k}^{i+1}  \big) & \leq D_\psi \big(\mathbf{z}_k,\mathbf{\hat{z}}_{k}^i\big) +      \frac{1}{2}  \big(\eta_k^i\, L_f -  \sigma \big) \left\Vert \mathbf{\hat{z}}_{k}^{i+1}   - \mathbf{\hat{z}}_{k}^i \right\Vert^2,
   \end{align} 
  where $i\in \{0,...,\mathrm{it}(k)\}$ and $\mathbf{z}_k $ is defined in \eqref{notation}. Moreover,  we have that for any $j\in\{0,...,\mathrm{it}(k)\}$
  \begin{align}
  \label{result: StabadapiT}  
D_\psi\big(\mathbf{z}_k, \mathbf{\hat{z}}_{k}^{j}\big) &\leq D_\psi\big(\mathbf{z}_k,  \mathbf{\bar{z}}_k \big)   \\& + \frac{1 }{2}  \sum_{i=0}^{j-1} \big(\eta_k^{i} \,L_f - \sigma  \big)\left\Vert  \mathbf{\hat{z}}_{k}^{i+1} -\mathbf{\hat{z}}_{k}^{i}   \right\Vert^2. \nonumber 
\end{align}
    \end{lem}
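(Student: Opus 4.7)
The plan is to carry out the now-standard mirror descent analysis for each iteration, and then telescope. The key tools are the first-order optimality condition for the update \eqref{optAdap}, the three-point identity for Bregman distances, and the Pythagorean property of Bregman projections (all recalled in Appendix~B).

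\textbf{Step 1 (proving \eqref{result: Stabadap}).} Since $\mathbf{\hat{z}}_k^{i+1}$ is the minimizer of a strictly convex function over the closed convex set $\mathcal{S}_k$, the first-order optimality condition gives, for every $\mathbf{z}\in\mathcal{S}_k$,
\begin{align*}
\bigl\langle \eta_k^i\nabla f_k(\mathbf{\hat{z}}_k^i) + \nabla\psi(\mathbf{\hat{z}}_k^{i+1}) - \nabla\psi(\mathbf{\hat{z}}_k^i),\; \mathbf{z}-\mathbf{\hat{z}}_k^{i+1}\bigr\rangle \geq 0.
\end{align*}
Note that $\mathbf{z}_k\in\mathcal{S}_k$, because the true state $x_{k-N}$ lies in $\mathcal{X}$ and the model residuals are zero. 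Setting $\mathbf{z}=\mathbf{z}_k$ and applying the three-point Bregman identity with $(a,b,c)=(\mathbf{z}_k,\mathbf{\hat{z}}_k^{i+1},\mathbf{\hat{z}}_k^i)$ rearranges the inner product involving $\nabla\psi$ into Bregman distances, yielding
\begin{align*}
D_\psi(\mathbf{z}_k,\mathbf{\hat{z}}_k^{i+1}) - D_\psi(\mathbf{z}_k,\mathbf{\hat{z}}_k^i) \leq \eta_k^i\nabla f_k(\mathbf{\hat{z}}_k^i)^\top(\mathbf{z}_k-\mathbf{\hat{z}}_k^{i+1}) - D_\psi(\mathbf{\hat{z}}_k^{i+1},\mathbf{\hat{z}}_k^i).
\end{align*}

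\textbf{Step 2 (bounding the gradient term).} Split $\mathbf{z}_k-\mathbf{\hat{z}}_k^{i+1} = (\mathbf{z}_k-\mathbf{\hat{z}}_k^i)+(\mathbf{\hat{z}}_k^i-\mathbf{\hat{z}}_k^{i+1})$. Convexity of $f_k$ together with Assumption~\ref{ass: f convex} (which puts the minimum of $f_k$ at $\mathbf{z}_k$) gives $\nabla f_k(\mathbf{\hat{z}}_k^i)^\top(\mathbf{z}_k-\mathbf{\hat{z}}_k^i)\leq f_k(\mathbf{z}_k)-f_k(\mathbf{\hat{z}}_k^i)$, while strong smoothness (Assumption~\ref{ass: f strngSmooth}) gives
\begin{align*}
\nabla f_k(\mathbf{\hat{z}}_k^i)^\top(\mathbf{\hat{z}}_k^i-\mathbf{\hat{z}}_k^{i+1}) \leq f_k(\mathbf{\hat{z}}_k^i) - f_k(\mathbf{\hat{z}}_k^{i+1}) + \tfrac{L_f}{2}\|\mathbf{\hat{z}}_k^{i+1}-\mathbf{\hat{z}}_k^i\|^2.
\end{align*}
Adding these and using $f_k(\mathbf{z}_k)\leq f_k(\mathbf{\hat{z}}_k^{i+1})$ (again because $\mathbf{z}_k$ is a minimizer) collapses the function-value telescope and leaves
\begin{align*}
\eta_k^i\nabla f_k(\mathbf{\hat{z}}_k^i)^\top(\mathbf{z}_k-\mathbf{\hat{z}}_k^{i+1}) \leq \tfrac{\eta_k^i L_f}{2}\|\mathbf{\hat{z}}_k^{i+1}-\mathbf{\hat{z}}_k^i\|^2.
\end{align*}
Combining with the lower bound $D_\psi(\mathbf{\hat{z}}_k^{i+1},\mathbf{\hat{z}}_k^i)\geq \frac{\sigma}{2}\|\mathbf{\hat{z}}_k^{i+1}-\mathbf{\hat{z}}_k^i\|^2$ from Assumption~\ref{ass: Bregman} produces exactly \eqref{result: Stabadap}.

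\textbf{Step 3 (proving \eqref{result: StabadapiT}).} Telescoping \eqref{result: Stabadap} from $i=0$ to $i=j-1$ yields the required sum plus a residual $D_\psi(\mathbf{z}_k,\mathbf{\hat{z}}_k^0)$. It remains to replace this by $D_\psi(\mathbf{z}_k,\mathbf{\bar{z}}_k)$. Since $\mathbf{\hat{z}}_k^0$ is by definition the Bregman projection of $\mathbf{\bar{z}}_k$ onto $\mathcal{S}_k$ (line~3 of Algorithm~\ref{alg: eta changes}), the generalized Pythagorean inequality for Bregman projections states that for any $\mathbf{y}\in\mathcal{S}_k$, $D_\psi(\mathbf{y},\mathbf{\hat{z}}_k^0)+D_\psi(\mathbf{\hat{z}}_k^0,\mathbf{\bar{z}}_k)\leq D_\psi(\mathbf{y},\mathbf{\bar{z}}_k)$. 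Applied to $\mathbf{y}=\mathbf{z}_k$, this gives $D_\psi(\mathbf{z}_k,\mathbf{\hat{z}}_k^0)\leq D_\psi(\mathbf{z}_k,\mathbf{\bar{z}}_k)$, and \eqref{result: StabadapiT} follows.

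\textbf{Main obstacle.} The only nontrivial step is Step~2: one must recognize that the gradient inner product should be split so that the ``good'' convexity direction $\mathbf{z}_k-\mathbf{\hat{z}}_k^i$ is handled by convexity/minimality of $f_k$, while the ``bad'' step direction $\mathbf{\hat{z}}_k^i-\mathbf{\hat{z}}_k^{i+1}$ is paid for by strong smoothness. The function-value terms telescope and cancel thanks to $\mathbf{z}_k$ being the minimizer of $f_k$, which is exactly what makes the estimate independent of the suboptimality gap and leaves the clean coefficient $\eta_k^i L_f-\sigma$.
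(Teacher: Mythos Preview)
Your proof is correct and follows essentially the same route as the paper's: both use the first-order optimality condition for \eqref{optAdap}, the three-point identity \eqref{threePoint}, convexity and strong smoothness of $f_k$, strong convexity of $\psi$, minimality of $f_k$ at $\mathbf{z}_k$, and the Bregman projection inequality \eqref{bregmanProj}. The only cosmetic difference is the order of operations---the paper first derives the general bound \eqref{LipschitzForRegret} for any $\mathbf{z}\in\mathcal{S}_k$ and then specializes to $\mathbf{z}_k$, whereas you set $\mathbf{z}=\mathbf{z}_k$ from the start---and your labeling of $(a,b,c)$ in the three-point identity is off (the correct assignment is $a=\mathbf{\hat{z}}_k^{i+1}$, $b=\mathbf{\hat{z}}_k^i$, $c=\mathbf{z}_k$), though the resulting inequality you write down is correct.
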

 The proof of Lemma~\ref{lem: Stabadap} can be found in  Appendix \ref{app:stabilityLemma}. We are now in a position to prove the stability result for the proposed pMHE scheme.

\begin{proof}[Proof of Theorem~\ref{prop: Stabadap}]
We first prove GES of the estimation error (\ref{EstimError}) with $j=\mathrm{j}(k)$. Let $V$ be a candidate Lyapunov function chosen as the Bregman distance  in (\ref{optAdap}), i.e.,
\begin{align}
 V\big(  \mathbf{z}_{k}, \mathbf{\hat{z}}^{\mathrm{j}(k)}_{k}\big)  = D_\psi\big( \mathbf{z}_{k},\mathbf{\hat{z}}^{\mathrm{j}(k)}_{k}\big).
\end{align}
Here, $\mathbf{z}_k$ denotes the true state with zero model residual as defined in (\ref{notation}) and $\mathbf{\hat{z}}^{\mathrm{j}(k)}_{k}$ the  selected  pMHE iterate   at time instant $k$. In the following, we show that $V $ satisfies the following conditions
\begin{subequations}
\begin{align}
\label{firstV}
\alpha_1 \big\Vert \mathbf{z}_{k}-\mathbf{\hat{z}}^{ \mathrm{j}(k)}_{k} \big\Vert^2 \leq  V\big(\mathbf{z}_{k},\mathbf{\hat{z}}^{ \mathrm{j}(k)}_{k}\big) \leq \alpha_2 \big\Vert \mathbf{z}_{k}-\mathbf{\hat{z}}^{\mathrm{j}(k)}_{k} \big\Vert^2 
\end{align}
and 
\begin{align}
\label{secondV}
\Delta V &\coloneqq V\big( \mathbf{z}_{k+1},\mathbf{\hat{z}}^{ \mathrm{j}(k+1)}_{k+1}\big) - V\big(  \mathbf{z}_{k},\mathbf{\hat{z}}^{ \mathrm{j}(k)}_{k}\big)   \\& \leq -\alpha_3 \big\Vert  \mathbf{z}_{k}-\mathbf{\hat{z}}^{\mathrm{j}(k)}_{k} \big\Vert^2  \nonumber
\end{align}
\end{subequations}
for some positive constants $\alpha_1$, $\alpha_2$ and $\alpha_3$. Note that, in view of (\ref{notation}), the error generated by the pMHE iteration scheme at time $k$ is given by 
\begin{align}
\mathbf{z}_{k}-\mathbf{\hat{z}}^{\mathrm{j}(k)}_{k}= \begin{bmatrix}
\vspace{0.1cm} x_{k-N}  \\[0.24em] \mathbf{0}_{[Nn]}  
\end{bmatrix} -  \begin{bmatrix}
 \hat{x}_{k-N}^{\mathrm{j}(k)}   \\[0.2em]  \mathbf{\hat{w}}^{ \mathrm{j}(k)}_{k}  
\end{bmatrix} =  \begin{bmatrix} e_{k-N}  \\[0.2em] -\mathbf{\hat{w}}^{ \mathrm{j}(k)}_{k} \end{bmatrix},
\end{align}
where $ \mathbf{\hat{w}}^{ \mathrm{j}(k)}_{k} \coloneqq \begin{bmatrix} \big(\hat{w}_{k-N}^{\mathrm{j}(k)}\big)^\top & \cdots & \big(\hat{w}_{k-1}^{\mathrm{j}(k)}\big)^\top   \end{bmatrix}^\top$.
By Assumption \ref{ass: Bregman},  (\ref{firstV}) follows with $\alpha_1 = \frac{\sigma}{2}$ and  $\alpha_2 = \frac{\gamma}{2}$. Furthermore, by (\ref{result: StabadapiT}) in Lemma \ref{lem: Stabadap}, we have
 \begin{align}
 \label{Vdiff1}
 \Delta V &=  D_\psi\big( \mathbf{z}_{k+1},\mathbf{\hat{z}}^{ \mathrm{j}(k+1)}_{k+1}\big)  - D_\psi\big( \mathbf{z}_{k},\mathbf{\hat{z}}^{ \mathrm{j}(k)}_{k}\big)  \\ 
& \leq D_\psi(\mathbf{z}_{k+1},  \mathbf{\bar{z}}_{k+1} )   - D_\psi\big( \mathbf{z}_{k},\mathbf{\hat{z}}^{ \mathrm{j}(k)}_{k}\big) \nonumber\\& \quad+ \frac{1 }{2}  \sum_{i=0}^{\mathrm{j}(k+1)-1} \big(\eta_{k+1}^{i} \,L_f - \sigma  \big)\left\Vert  \mathbf{\hat{z}}_{k+1}^{i+1} -\mathbf{\hat{z}}_{k+1}^{i}   \right\Vert^2. \nonumber    
\end{align}
The condition on the step sizes given in (\ref{stepSizeadap}) implies that $
\eta_{k+1}^{i} \,L_f - \sigma \leq 0$ and hence,
\begin{align}
\label{VdiffsumNeg}
\frac{1 }{2}  \sum_{i=0}^{\mathrm{j}(k+1)-1} \big(\eta_{k+1}^{i} \,L_f - \sigma  \big)\left\Vert   \mathbf{\hat{z}}_{k+1}^{i+1} -\mathbf{\hat{z}}_{k+1}^{i} \right\Vert^2 \leq 0.
\end{align}
Moreover, since $ \mathbf{\bar{z}}_{k+1} = \Phi_k\big( \mathbf{\hat{z}}^{ \mathrm{j}(k)}_{k}\big)$ and  
	\begin{align}
\Phi_k \left(\mathbf{z}_{k} \right) \! &=  \!  \begin{bmatrix}
   \! A\,  x_{k-N} +\!B\, u_{k-N}+\!L \left(y_{k-N} - C  x_{k-N}  \right)     \\\mathbf{0}_{[Nn]}  
     \end{bmatrix}\nonumber \\ 
     &=  \!  \begin{bmatrix}
   \! A\,  x_{k-N} +\!B\, u_{k-N}     \\\mathbf{0}_{[Nn]}  
     \end{bmatrix} = \mathbf{z}_{k+1} 
       \end{align}
in view of (\ref{Adap: warm}), we have 
 \begin{align}
 \label{diff21}
\Delta V 
&\leq    D_\psi(\mathbf{z}_{k+1},  \mathbf{\bar{z}}_{k+1} )   - D_\psi\big( \mathbf{z}_{k},\mathbf{\hat{z}}^{ \mathrm{j}(k)}_{k}\big)  \\
&= D_\psi\big(  \Phi_k\big(\mathbf{z}_{k} \big), \Phi_k\big( \mathbf{\hat{z}}^{ \mathrm{j}(k)}_{k}\big)\big)-D_\psi\big( \mathbf{z}_{k},\mathbf{\hat{z}}^{ \mathrm{j}(k)}_{k}\big)   \nonumber.
\end{align}

 Given that the Bregman distance satisfies  (\ref{DiffDBoundadap}), we obtain \begin{align}
 \label{Vdiff2}
\Delta V 
&\leq - c \,\big\Vert \mathbf{z}_{k} -  \mathbf{\hat{z}}^{ \mathrm{j}(k)}_{k} \big\Vert^2  \\
& =  - c \, \big\Vert  e_{k-N} \big\Vert^2  -c\, \big\Vert  \mathbf{\hat{w}}^{ \mathrm{j}(k)}_{k} \big\Vert^2\nonumber.
\end{align}
Hence, the candidate Lyapunov function satisfies (\ref{secondV}) with $\alpha_3 = c$ and the estimation error (\ref{EstimError}) with $j=\mathrm{j}(k)$ is GES. In the following, we show that GES holds also for any $j~\in~\{0,...,\mathrm{it}(k)\}$. Based on the previous Lyapunov analysis, we have that 
 \begin{align}
 & D_\psi \big( \mathbf{z}_{k} ,\mathbf{\hat{z}}^{\mathrm{j}(k )}_{k} \big)  - D_\psi\big( \mathbf{z}_{k-1},\mathbf{\hat{z}}^{ \mathrm{j}(k-1)}_{k-1} \big) \\
&\quad\leq   -  c \Vert \mathbf{z}_{k-1} - \mathbf{\hat{z}}^{ \mathrm{j}(k-1)}_{k-1} \Vert^2  \leq  -\frac{2c}{\gamma} D_\psi\big( \mathbf{z}_{k-1},\mathbf{\hat{z}}^{ \mathrm{j}(k-1)}_{k-1}\big) , \nonumber
\end{align} 
where the last inequality holds by the strong smoothness of the Bregman distance.
By defining $\beta_e \coloneqq 1 - \frac{2c}{\gamma}$, 
\begin{align}
0 \leq  D_\psi \big( \mathbf{z}_{k} ,\mathbf{\hat{z}}^{\mathrm{j}(k )}_{k} \big)  \leq \beta_e \, D_\psi\big( \mathbf{z}_{k-1},\mathbf{\hat{z}}^{ \mathrm{j}(k-1)}_{k-1}\big),
\end{align}
 where $\beta_e \in [0,1)$ since $D_\psi$ is nonnegative and $\frac{2c}{\gamma}>0$. Hence
\begin{align}
\label{toreplace}
 D_\psi\big(    \mathbf{z}_{k},\mathbf{\hat{z}}^{ \mathrm{j}(k)}_{k}  \big) \leq \beta_e^{k} \, D_\psi\big( \mathbf{z}_{0},\mathbf{\bar{z}}_0\big).
\end{align}
We consider the difference 
$   D_\psi\big( \mathbf{z}_{k+1},\mathbf{\hat{z}}^{ j}_{k+1}\big)  - D_\psi\big( \mathbf{z}_{k},\mathbf{\hat{z}}^{ \mathrm{j}(k)}_{k}\big)
$
for any $j\in\{0,...,\mathrm{it}(k+1)\}$. By (\ref{result: StabadapiT}), we have  
  \begin{align}
 &D_\psi\big( \mathbf{z}_{k+1},\mathbf{\hat{z}}^{ j}_{k+1}\big)  - D_\psi\big( \mathbf{z}_{k},\mathbf{\hat{z}}^{ \mathrm{j}(k)}_{k}\big) \\ &\quad\leq  D_\psi\big( \mathbf{z}_{k+1},\mathbf{\bar{z}}_{k+1}\big)  - D_\psi\big( \mathbf{z}_{k},\mathbf{\hat{z}}^{ \mathrm{j}(k)}_{k}\big)\nonumber \\
 &\quad\leq - c \,\big\Vert \mathbf{z}_{k} -  \mathbf{\hat{z}}^{ \mathrm{j}(k)}_{k} \big\Vert^2 \nonumber,
\end{align}
where the last inequality holds in view of (\ref{diff21}) and (\ref{Vdiff2}). Hence, by using $\beta_e$ again, we have for any $j\in\{0,...,\mathrm{it}(k+1)\}$
\begin{align}
  D_\psi \big( \mathbf{z}_{k+1} ,\mathbf{\hat{z}}^{j}_{k+1} \big)  \leq \beta_e \, D_\psi\big( \mathbf{z}_{k},\mathbf{\hat{z}}^{ \mathrm{j}(k)}_{k}\big).
\end{align} 
By (\ref{toreplace}), we obtain  $\forall k>0$
\begin{align}
  D_\psi \big( \mathbf{z}_{k+1} ,\mathbf{\hat{z}}^{j}_{k+1} \big)  \leq \beta_e^{k+1} \, D_\psi\big( \mathbf{z}_{0},\mathbf{\bar{z}}_0\big).
\end{align}
By the strong smoothness and convexity of $D_\psi$, we therefore get the following GES property of the estimation error 
\begin{align}
\label{GESproperty1}
  \big\Vert \mathbf{z}_k - \mathbf{\hat{z}}_{k}^{j} \big\Vert^2   &\leq    \frac{\gamma}{\sigma}\beta_e^{k} \,  \big\Vert \mathbf{z}_{0}-\mathbf{\bar{z}}_0\big\Vert^2.
\end{align}
\end{proof}
The theorem implies that stability of the estimation error is guaranteed 
for any iterate and is
independent  of which iterate $\mathbf{\hat{z}}^{ \mathrm{j}(k)}_{k}$ is picked from the sequence  $\big\lbrace  \mathbf{\hat{z}}^{0}_{k}, \cdots,\mathbf{\hat{z}}^{\mathrm{it}(k)}_{k} \big\rbrace $
in Step 7 in Algorithm \ref{alg: eta changes}, as well as of the number of iterations $\mathrm{it}(k)$. Hence, the algorithm generates convergent estimates after each optimizer update step and can be therefore considered as an  anytime  MHE algorithm.   Moreover, selecting $\eta_k^i = \frac{\sigma}{L_f}$, for all $i$ and $k$, is sufficient for ensuring GES. \rev{In addition, this guarantee holds independently of the choice of the horizon length $N\in \mathbb{N}_+$ and for any convex stage cost satisfying Assumptions \ref{ass: f convex} and \ref{ass: f strngSmooth}. This includes for instance quadratic functions and the Huber penalty function,  which allows to handle the important case of measurement outliers.} 

 It is worth pointing out that detectability of the pair $(A,C)$ implies that we can find  suitable choices of the Bregman distance $D_\psi$  that fulfill condition (\ref{DiffDBoundadap}). In particular, let
\begin{align}
\label{quadBregm}
D_\psi(\mathbf{z}_1,\mathbf{z}_2)  = \frac{1}{2}  \Vert x_1 -x_2 \Vert^2_P + \frac{1}{2}  \Vert \mathbf{w}_1 -\mathbf{w}_2 \Vert^2_W
\end{align}
with $\mathbf{z}_1= \begin{bmatrix}
 x_1^\top & \mathbf{w}_1^\top  \end{bmatrix}^\top$,  $\mathbf{z}_2=\begin{bmatrix}
 x_2^\top & \mathbf{w}_2^\top  \end{bmatrix}^\top$, $x_1, x_2 \in \mathbb{R}^n$, $\mathbf{w}_1, \mathbf{w}_2 \in \mathbb{R}^{Nn}$, and $P \in \mathbb{S}_{+\!\!+}^n$, $W \in \mathbb{S}_{+\!\!+}^{Nn}$. Using (\ref{Adap: warm}) and a simple algebraic manipulation, we have that
 \begin{align}
 \label{Bregman difference }
 &D_\psi(\Phi_k(\mathbf{z}_1), \Phi_k(\mathbf{z}_2)) - D_\psi(\mathbf{z}_1,\mathbf{z}_2) \\ & \qquad=     \frac{1}{2} \left\Vert (A-LC) (x_1 - x_2)  \right\Vert^2_P - \frac{1}{2} \left\Vert x_1 - x_2   \right\Vert^2_P \nonumber \\ &\qquad \qquad+ \frac{1}{2} \left\Vert \mathbf{0} \right\Vert^2_W - \frac{1}{2} \left\Vert \mathbf{w}_1 - \mathbf{w}_2 \right\Vert^2_W \nonumber.
 \end{align}
 Hence, satisfying (\ref{DiffDBoundadap}) amounts to designing  the weight matrix $P \in \mathbb{S}_{+\!\!+}^n$ such that the linear matrix inequality  (LMI) 
\begin{align}
\label{LMIBregman}
(A-LC )^ \top P \, (A-LC) -P \prec - Q  
\end{align} 
holds for some $Q \in \mathbb{S}_{+\!\!+}^n$. This is because when (\ref{LMIBregman}) holds,~(\ref{Bregman difference }) yields 
 \begin{align}
 &D_\psi(\Phi_k(\mathbf{z}_1), \Phi_k(\mathbf{z}_2)) - D_\psi(\mathbf{z}_1,\mathbf{z}_2) \\ & \qquad\leq  - \frac{\lambda_{\min}(Q)}{2} \left\Vert x_1 - x_2   \right\Vert^2 - \frac{\lambda_{\min}(W)}{2} \left\Vert \mathbf{w}_1 - \mathbf{w}_2 \right\Vert^2 \nonumber \\
 & \qquad \leq -c \, \Vert \mathbf{z}_1 - \mathbf{z}_2 \Vert^2  \nonumber
 \end{align}
 where $c = \frac{1}{2} \min\lbrace \lambda_{\min}(Q), \lambda_{\min}(W) \rbrace $.

\section{Regret Analysis}
\label{sec: regret}
 In this section, we study the performance of the proposed anytime pMHE iteration scheme. Recall the  performance  criterion  of the original estimation problem (\ref{condensed}), which   is  to minimize at each time instant $k$ the sum of stage costs  $f_k$.   In order to characterize the overall performance of  Algorithm~\ref{alg: eta changes},  \rev{we investigate the accumulation of losses $f_k$ over 	the considered simulation time $T\in \mathbb{N}_+$ given by}
\begin{align} 
\label{accCost}
\sum_{k=1}^{T}\quad \min_{0 \leq i \leq \mathrm{it}(k) } f_k (   \mathbf{\hat{z}}^{i}_{k} ) . 
\end{align}
Note that the $\min$ operator in (\ref{accCost}) follows from the fact that the generated sequence  of iterates  $\lbrace \mathbf{\hat{z}}^{0}_{k},\cdots,  \mathbf{\hat{z}}^{ \mathrm{it}(k)}_{k}  \rbrace$ does not  necessarily  produce $f_k(\mathbf{\hat{z}}^{0}_{k}) \geq  \cdots \geq f_k(\mathbf{\hat{z}}^{ \mathrm{it}(k)}_{k}) $. \rev{Hence, given $\lbrace \mathbf{\hat{z}}^{0}_{k},\cdots,  \mathbf{\hat{z}}^{ \mathrm{it}(k)}_{k}  \rbrace$, we have to choose a suitable  $\mathbf{\hat{z}}^{i}_{k}$ whose function value is then used in the performance analysis. Following the literature on mirror descent algorithms \cite{beck2003mirror},} we select the iterate  with the minimal cost as our estimate, i.e., $\mathbf{\hat{z}}_k^{i_o(k)}$ with $i_o(k)=\mathrm{arg} \min_{0 \leq i \leq \mathrm{it}(k) } f_k (   \mathbf{\hat{z}}^{i}_{k} )$. \rev{One advantage of this selection is that it allows us to adapt many tools used in the convergence proof of the mirror descent algorithm to the regret analysis.} Further, we choose $\mathrm{j}(k)=\mathrm{it}(k)$ in Algorithm~\ref{alg: eta changes}. 
Any other choice is in principle possible, but one has to adapt the subsequent analysis accordingly. \\
Our goal is to ensure that (\ref{accCost}) is not much larger than the total loss  $\sum_{k=1}^{T}  \,  f_k (   \mathbf{z}_{k}^c )$ incurred by any comparator sequence $  \left\lbrace  \mathbf{z}_{1}^c,   \mathbf{z}_{2}^c, \dots,   \mathbf{z}_{T}^c \right\rbrace$ satisfying $\mathbf{z}_{k}^c \in \mathcal{S}_k$. In other words, we aim to obtain a  low \textit{regret}, which we define as
\begin{align}
\label{regretNew}
R(T) & \coloneqq \sum_{k=1}^{T} \, \min_{0 \leq i \leq \mathrm{it}(k) } f_k (   \mathbf{\hat{z}}^{i}_{k} ) - \sum_{k=1}^{T}  \,  f_k (   \mathbf{z}_{k}^c ) .
\end{align}
By computing an upper bound for the regret, \rev{we can design suitable step sizes that yield a sublinear regret, i.e., the  regret bound $\mathcal{O}(\sqrt{T})$. This is a meaningful regret bound and well-known in the context of online convex  optimization since it implies that the average regret $ R(T)/T$ tends to zero for $T \rightarrow \infty$ and hence that the proposed algorithm performs well, on average as well as the comparator \cite{mokhtari2016online}. This property of the algorithm is especially desirable when the regret is used to} evaluate how well the pMHE iteration scheme performs compared to an estimation scheme that knows the optimal solutions  $  \left\lbrace  \mathbf{z}_{1}^c,   \mathbf{z}_{2}^c, \dots,   \mathbf{z}_{T}^c \right\rbrace$. Hence, we measure the real-time regret of our algorithm that carries out only finitely many optimization iterations
(due to limited hardware resources and/or minimum required sampling rate)
relative to a comparator algorithm that gets instantaneously an optimal solution from some oracle. 
 \subsection{Regret with respect to arbitrary comparator sequences}
In this section, we  establish  bounds on the regret generated by Algorithm~\ref{alg: eta changes}.  Similar to \cite{hall2013dynamical}, we derive regret bounds that  depend on the variation of the comparator sequence with respect to the dynamics $\Phi_k$ defined in (\ref{Adap: warm}):
\begin{align}
  C_{T}(\mathbf{z}_{1}^c,\cdots, \mathbf{z}_{T}^c)  &\coloneqq \sum_{k=1}^{T} \left\Vert \mathbf{z}_{k+1}^c - \Phi_k\left(\mathbf{z}_{k}^c \right) \right\Vert. \label{compSeq}
\end{align}
Moreover, we define the following notations:
\begin{align*}
G_f &\coloneqq\max_{\mathbf{z} \in \mathcal{S}_k, k > 0} \,\Vert \nabla  f_k(\mathbf{z} ) \Vert \\
 M_1 &\coloneqq   \max_{ \mathbf{z} \in  \mathcal{S}_k, k > 0} \left\Vert \nabla  \psi(\mathbf{z}) \right\Vert,  \quad \,  M_2  \coloneqq   \max_{ \mathbf{z} \in  \mathcal{S}_k, k > 0} \left\Vert \nabla  \psi(\Phi(\mathbf{z})) \right\Vert    \\ 
  M&\coloneqq   M_1 + M_2, \hspace{1.35cm}     D_{\max}  \coloneqq \max_{\mathbf{z}_1,\mathbf{z}_2 \in  \mathcal{S}_k, k > 0} D_\psi(\mathbf{z}_1,\mathbf{z}_2) ,
\end{align*}
where we assume that the maximum in each definition is well-defined. Our first main result is stated next.
\begin{theorem}
\label{theorem_iTadap}
Consider  Algorithm \ref{alg: eta changes} with $  \mathrm{j}(k)= \mathrm{it}(k) $ and any comparator sequence $  \left\lbrace  \mathbf{z}_{1}^c,   \mathbf{z}_{2}^c, \dots,   \mathbf{z}_{T}^c \right\rbrace $ with $\mathbf{z}_{k}^c \in \mathcal{S}_k$.  Let Assumptions  \ref{ass: S closed}, \ref{ass: f convex}   and \ref{ass: Bregman} hold true.  If we choose the Bregman distance $D_\psi$ such that
 \begin{align}
 \label{BregDistanceNonneg}
D_\psi(\Phi_k(\mathbf{z}), \Phi_k(\mathbf{\hat{z}})) - D_\psi(\mathbf{z},\mathbf{\hat{z}})    \leq 0 
\end{align} 
and employ  non-increasing sequences 
\begin{align}
\label{thm: non-incr sequences}
\sum_{i=0}^{\mathrm{it}(k+1)-1}\eta_{k+1}^{i} \leq \sum_{i=0}^{\mathrm{it}(k)-1}\eta_k^{i},
\end{align}
then  Algorithm \ref{alg: eta changes} gives the following regret bound
\begin{align}
\label{regretGenBound}
R(T)  
 \leq & \frac{D_{max}}{\sum_{i=0}^{\mathrm{it}(T)-1}\eta_{T}^{i}} + \frac{   G_f^2 }{2  \sigma  }\sum_{k=1}^{T}    \frac{\sum_{i=0}^{\mathrm{it}(k)-1}    (\eta_k^{i})^2  }{\sum_{i=0}^{\mathrm{it}(k)-1}\eta_k^{i}} \\ & \qquad      + \frac{M  }{\sum_{i=0}^{\mathrm{it}(T)-1}\eta_T^{i}} \sum_{k=1}^{T} \left\Vert   \mathbf{z}_{k+1}^c - \Phi_k\left(\mathbf{z}_{k}^c \right) \right\Vert \nonumber .
\end{align}
\end{theorem}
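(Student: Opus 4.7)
The plan is to adapt a standard mirror descent regret analysis to our setting with a time-varying constraint set, a dynamic comparator, and the specific warm-starting mechanism of Algorithm \ref{alg: eta changes}. The proof naturally splits into a per-time-step bound controlling the inner iterations, followed by a telescoping argument across $k$ that exploits hypotheses \eqref{BregDistanceNonneg} and \eqref{thm: non-incr sequences} together with the Bregman-projection character of the warm start.

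First, I would derive the per-time bound. For the optimization update \eqref{optAdap}, the first-order optimality condition combined with the three-point identity for Bregman distances yields, for any $\mathbf{u}\in\mathcal{S}_k$,
\begin{equation*}
\eta_k^i \nabla f_k(\mathbf{\hat{z}}_k^i)^\top(\mathbf{\hat{z}}_k^{i+1} - \mathbf{u}) \leq D_\psi(\mathbf{u},\mathbf{\hat{z}}_k^i) - D_\psi(\mathbf{u},\mathbf{\hat{z}}_k^{i+1}) - D_\psi(\mathbf{\hat{z}}_k^{i+1},\mathbf{\hat{z}}_k^i).
\end{equation*}
Adding and subtracting $\eta_k^i\nabla f_k(\mathbf{\hat{z}}_k^i)^\top\mathbf{\hat{z}}_k^i$, invoking convexity of $f_k$ (Assumption \ref{ass: f convex}), Cauchy--Schwarz with the gradient bound $G_f$, Young's inequality, and the strong convexity of $\psi$ (Assumption \ref{ass: Bregman}) to absorb the negative $D_\psi(\mathbf{\hat{z}}_k^{i+1},\mathbf{\hat{z}}_k^i)$ term, I arrive at
\begin{equation*}
\eta_k^i\bigl(f_k(\mathbf{\hat{z}}_k^i) - f_k(\mathbf{u})\bigr) \leq \frac{(\eta_k^i)^2 G_f^2}{2\sigma} + D_\psi(\mathbf{u},\mathbf{\hat{z}}_k^i) - D_\psi(\mathbf{u},\mathbf{\hat{z}}_k^{i+1}).
\end{equation*}
Summing over $i=0,\ldots,\mathrm{it}(k)-1$ telescopes the Bregman terms, and the weighted-average lower bound $\min_i f_k(\mathbf{\hat{z}}_k^i) \leq \bigl(\sum_i\eta_k^i\bigr)^{-1}\sum_i\eta_k^i f_k(\mathbf{\hat{z}}_k^i)$ together with $\mathbf{u}=\mathbf{z}_k^c$ yields, writing $H_k\coloneqq\sum_{i=0}^{\mathrm{it}(k)-1}\eta_k^i$ and $Q_k\coloneqq\sum_{i=0}^{\mathrm{it}(k)-1}(\eta_k^i)^2$,
\begin{equation*}
\min_i f_k(\mathbf{\hat{z}}_k^i) - f_k(\mathbf{z}_k^c) \leq \frac{G_f^2 Q_k}{2\sigma H_k} + \frac{1}{H_k}\bigl[D_\psi(\mathbf{z}_k^c,\mathbf{\hat{z}}_k^0) - D_\psi(\mathbf{z}_k^c,\mathbf{\hat{z}}_k^{\mathrm{it}(k)})\bigr].
\end{equation*}

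The main obstacle is the cross-time link: bounding $D_\psi(\mathbf{z}_k^c,\mathbf{\hat{z}}_k^0)$ in terms of $D_\psi(\mathbf{z}_{k-1}^c,\mathbf{\hat{z}}_{k-1}^{\mathrm{it}(k-1)})$ modulo the comparator drift. I would proceed in three sub-steps: (i) since $\mathbf{\hat{z}}_k^0$ is the Bregman projection of $\bar{\mathbf{z}}_k$ onto $\mathcal{S}_k$ and $\mathbf{z}_k^c\in\mathcal{S}_k$, the generalized Pythagorean inequality gives $D_\psi(\mathbf{z}_k^c,\mathbf{\hat{z}}_k^0)\leq D_\psi(\mathbf{z}_k^c,\bar{\mathbf{z}}_k)$; (ii) split $D_\psi(\mathbf{z}_k^c,\bar{\mathbf{z}}_k) = \bigl[D_\psi(\mathbf{z}_k^c,\bar{\mathbf{z}}_k) - D_\psi(\Phi_{k-1}(\mathbf{z}_{k-1}^c),\bar{\mathbf{z}}_k)\bigr] + D_\psi(\Phi_{k-1}(\mathbf{z}_{k-1}^c),\bar{\mathbf{z}}_k)$; (iii) expand the bracketed difference via \eqref{BregmanDef} to the identity $\psi(\mathbf{z}_k^c) - \psi(\Phi_{k-1}(\mathbf{z}_{k-1}^c)) - \nabla\psi(\bar{\mathbf{z}}_k)^\top(\mathbf{z}_k^c - \Phi_{k-1}(\mathbf{z}_{k-1}^c))$, upper-bound the leading difference by convexity of $\psi$ using the subgradient inequality at $\mathbf{z}_k^c$, and apply Cauchy--Schwarz with the definitions of $M_1,M_2$ (noting $\bar{\mathbf{z}}_k = \Phi_{k-1}(\mathbf{\hat{z}}_{k-1}^{\mathrm{it}(k-1)})$ has $\mathbf{\hat{z}}_{k-1}^{\mathrm{it}(k-1)}\in\mathcal{S}_{k-1}$) to obtain $M\lVert\mathbf{z}_k^c - \Phi_{k-1}(\mathbf{z}_{k-1}^c)\rVert$. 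The remaining term $D_\psi(\Phi_{k-1}(\mathbf{z}_{k-1}^c),\Phi_{k-1}(\mathbf{\hat{z}}_{k-1}^{\mathrm{it}(k-1)}))$ is controlled by the non-expansion hypothesis \eqref{BregDistanceNonneg}. Altogether, $D_\psi(\mathbf{z}_k^c,\mathbf{\hat{z}}_k^0) \leq D_\psi(\mathbf{z}_{k-1}^c,\mathbf{\hat{z}}_{k-1}^{\mathrm{it}(k-1)}) + M\lVert\mathbf{z}_k^c - \Phi_{k-1}(\mathbf{z}_{k-1}^c)\rVert$ for $k\geq 2$, with $D_\psi(\mathbf{z}_1^c,\mathbf{\hat{z}}_1^0)\leq D_{\max}$.

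The final step is an Abel-type summation of $\sum_{k=1}^{T} H_k^{-1}\bigl[D_\psi(\mathbf{z}_k^c,\mathbf{\hat{z}}_k^0) - D_\psi(\mathbf{z}_k^c,\mathbf{\hat{z}}_k^{\mathrm{it}(k)})\bigr]$. Substituting the inequality above into the $D_\psi(\mathbf{z}_k^c,\mathbf{\hat{z}}_k^0)$ terms and regrouping, each interior Bregman term $D_\psi(\mathbf{z}_k^c,\mathbf{\hat{z}}_k^{\mathrm{it}(k)})$ acquires a coefficient $1/H_{k+1} - 1/H_k$, which is nonnegative by hypothesis \eqref{thm: non-incr sequences}; bounding these terms uniformly by $D_{\max}$ makes the telescope collapse to $D_{\max}/H_T$. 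Using $1/H_k\leq 1/H_T$ on the comparator-drift residuals yields $(M/H_T)\sum_{k=1}^{T}\lVert\mathbf{z}_{k+1}^c - \Phi_k(\mathbf{z}_k^c)\rVert$. Combining with the inner-iteration contribution $\frac{G_f^2}{2\sigma}\sum_{k=1}^{T} Q_k/H_k$ produces exactly \eqref{regretGenBound}. I expect step (iii) above to be the delicate point, because it is the only place where convexity of $\psi$ (rather than the Bregman structure itself) is needed, and where the precise definitions of $M_1,M_2$ become essential.
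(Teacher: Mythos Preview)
Your proposal is correct and follows essentially the same route as the paper: the per-iteration mirror-descent inequality, the Bregman-projection bound for the warm start, the nonexpansion hypothesis \eqref{BregDistanceNonneg}, the convexity-of-$\psi$ estimate yielding the $M$-term, and the Abel summation exploiting \eqref{thm: non-incr sequences} are exactly the ingredients in the paper's Lemma~\ref{regret_betweeniT} and the proof of Theorem~\ref{theorem_iTadap}. The only difference is organizational: the paper folds the cross-time link into the lemma so that the telescoping sum runs directly over $D_\psi(\mathbf{z}_k^c,\hat{\mathbf{z}}_k^0)-D_\psi(\mathbf{z}_{k+1}^c,\hat{\mathbf{z}}_{k+1}^0)$, whereas you keep the inner telescope $D_\psi(\mathbf{z}_k^c,\hat{\mathbf{z}}_k^0)-D_\psi(\mathbf{z}_k^c,\hat{\mathbf{z}}_k^{\mathrm{it}(k)})$ and apply the cross-time inequality afterwards; both arrangements lead to the same bound.
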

The proof of this result relies on the next lemma. 
\begin{lem}
\label{regret_betweeniT}
Consider  Algorithm \ref{alg: eta changes} with  $  \mathrm{j}(k)= \mathrm{it}(k) $  and any comparator sequence  $  \left\lbrace  \mathbf{z}_{1}^c,   \mathbf{z}_{2}^c, \dots,   \mathbf{z}_{T}^c \right\rbrace $  with $\mathbf{z}_{k}^c \in \mathcal{S}_k$.  Suppose Assumptions~\ref{ass: S closed},~\ref{ass: f convex} and~\ref{ass: Bregman} hold.  Then for a given iteration step~$i$ and a time instant $k>0$, we have that
\begin{align}
\label{result: regret_betweeniT}
  \eta_k^i &\left(f_k(\mathbf{\hat{z}}_{k}^i) - f_k( \mathbf{z}_{k}^c) \right) \\ &\leq  D_\psi( \mathbf{z}_{k}^c,\mathbf{\hat{z}}_{k}^{i}) - D_\psi(\mathbf{z}_{k}^c, \mathbf{\hat{z}}_{k}^{i+1}) +   \frac{ ( \eta_k^i)^2}{2 \sigma} \Vert \nabla f_k\left(\mathbf{\hat{z}}_{k}^{i}\right) \Vert^2. \nonumber 
\end{align}
Moreover, if  we choose the Bregman distance $D_\psi$ such that
 \begin{align}
 \label{Dcontadap}
D_\psi(\Phi_k(\mathbf{z}), \Phi_k(\mathbf{\hat{z}})) - D_\psi(\mathbf{z},\mathbf{\hat{z}})    \leq 0 ,
\end{align} 
then 
 \begin{align}
 \label{result: regret_iTasap}
    \min_{0 \leq i \leq \mathrm{it}(k) } &f_k (  \mathbf{\hat{z}}_k^{i} )  -  f_k (\mathbf{z}_{k}^c )    \\& \enskip \leq \frac{1}{\sum_{i=0}^{\mathrm{it}(k)-1}\eta_k^{i}} \Big( D_\psi \left(  \mathbf{z}_{k}^c , \mathbf{\hat{z}}_k^0 \right) - D_\psi \left( \mathbf{z}_{k+1}^c ,   \mathbf{\hat{z}}_{k+1}^0\right)  \nonumber \\&\quad   +  \frac{G_f^2}{2 \sigma}  \sum_{i=0}^{\mathrm{it}(k)-1}   (\eta_k^{i})^2  +         M \Vert \mathbf{z}_{k+1}^c - \Phi_k\left(\mathbf{z}_{k}^c  \right) \Vert   \Big) \nonumber.
  \end{align}
\end{lem}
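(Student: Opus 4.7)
The two parts are handled separately, with the first feeding into the second.

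For the per-iteration bound \eqref{result: regret_betweeniT}, the approach is the standard mirror-descent analysis. I would first invoke the first-order optimality condition of the update \eqref{optAdap}: since $\hat{\mathbf{z}}_k^{i+1}$ minimizes $\eta_k^i \nabla f_k(\hat{\mathbf{z}}_k^i)^\top \mathbf{z} + D_\psi(\mathbf{z},\hat{\mathbf{z}}_k^i)$ over $\mathcal{S}_k$, for every $\mathbf{z}_k^c\in\mathcal{S}_k$ one has
\[
\bigl(\eta_k^i \nabla f_k(\hat{\mathbf{z}}_k^i) + \nabla\psi(\hat{\mathbf{z}}_k^{i+1}) - \nabla\psi(\hat{\mathbf{z}}_k^i)\bigr)^\top (\mathbf{z}_k^c - \hat{\mathbf{z}}_k^{i+1}) \geq 0.
\]
Combining this with the three-point identity $D_\psi(\mathbf{z}_k^c,\hat{\mathbf{z}}_k^i) - D_\psi(\mathbf{z}_k^c,\hat{\mathbf{z}}_k^{i+1}) - D_\psi(\hat{\mathbf{z}}_k^{i+1},\hat{\mathbf{z}}_k^i) = (\nabla\psi(\hat{\mathbf{z}}_k^{i+1}) - \nabla\psi(\hat{\mathbf{z}}_k^i))^\top(\hat{\mathbf{z}}_k^{i+1} - \mathbf{z}_k^c)$ yields a bound on $\eta_k^i \nabla f_k(\hat{\mathbf{z}}_k^i)^\top(\hat{\mathbf{z}}_k^{i+1} - \mathbf{z}_k^c)$. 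Convexity of $f_k$ (Assumption~\ref{ass: f convex}) gives $f_k(\hat{\mathbf{z}}_k^i) - f_k(\mathbf{z}_k^c) \leq \nabla f_k(\hat{\mathbf{z}}_k^i)^\top(\hat{\mathbf{z}}_k^i - \mathbf{z}_k^c)$; splitting this as $(\hat{\mathbf{z}}_k^i - \hat{\mathbf{z}}_k^{i+1}) + (\hat{\mathbf{z}}_k^{i+1} - \mathbf{z}_k^c)$ and absorbing the remainder via Young's inequality together with $D_\psi(\hat{\mathbf{z}}_k^{i+1},\hat{\mathbf{z}}_k^i) \geq \tfrac{\sigma}{2}\|\hat{\mathbf{z}}_k^{i+1}-\hat{\mathbf{z}}_k^i\|^2$ from Assumption~\ref{ass: Bregman} produces exactly~\eqref{result: regret_betweeniT}.

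For \eqref{result: regret_iTasap}, I would sum \eqref{result: regret_betweeniT} over $i=0,\ldots,\mathrm{it}(k)-1$; the Bregman terms telescope to $D_\psi(\mathbf{z}_k^c,\hat{\mathbf{z}}_k^0) - D_\psi(\mathbf{z}_k^c,\hat{\mathbf{z}}_k^{\mathrm{it}(k)})$, and each $\|\nabla f_k(\hat{\mathbf{z}}_k^i)\|^2$ is replaced by $G_f^2$. Since $\min_i f_k(\hat{\mathbf{z}}_k^i)\,\sum_i \eta_k^i \leq \sum_i \eta_k^i f_k(\hat{\mathbf{z}}_k^i)$, dividing through by $\sum_i \eta_k^i$ yields almost the desired inequality, with $-D_\psi(\mathbf{z}_k^c,\hat{\mathbf{z}}_k^{\mathrm{it}(k)})$ in place of $-D_\psi(\mathbf{z}_{k+1}^c,\hat{\mathbf{z}}_{k+1}^0)$.

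The main obstacle — and the step where \eqref{Dcontadap} and the comparator variation enter — is showing
\[
D_\psi\bigl(\mathbf{z}_{k+1}^c,\hat{\mathbf{z}}_{k+1}^0\bigr) \leq D_\psi\bigl(\mathbf{z}_k^c,\hat{\mathbf{z}}_k^{\mathrm{it}(k)}\bigr) + M\,\bigl\Vert \mathbf{z}_{k+1}^c - \Phi_k(\mathbf{z}_k^c)\bigr\Vert.
\]
My plan is to chain three bounds. First, the generalized Pythagorean inequality for the Bregman projection $\hat{\mathbf{z}}_{k+1}^0 = \arg\min_{\mathbf{z}\in\mathcal{S}_{k+1}} D_\psi(\mathbf{z},\bar{\mathbf{z}}_{k+1})$ gives $D_\psi(\mathbf{z}_{k+1}^c,\hat{\mathbf{z}}_{k+1}^0) \leq D_\psi(\mathbf{z}_{k+1}^c,\bar{\mathbf{z}}_{k+1})$. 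Second, writing out the Bregman distances from their definition~\eqref{BregmanDef} at the two first-arguments $\mathbf{z}_{k+1}^c$ and $\Phi_k(\mathbf{z}_k^c)$ and using convexity of $\psi$ together with the gradient bounds defining $M_1$ and $M_2$ (recalling $\bar{\mathbf{z}}_{k+1} = \Phi_k(\hat{\mathbf{z}}_k^{\mathrm{it}(k)})$, so $\|\nabla\psi(\bar{\mathbf{z}}_{k+1})\|\leq M_2$) yields $D_\psi(\mathbf{z}_{k+1}^c,\bar{\mathbf{z}}_{k+1}) - D_\psi(\Phi_k(\mathbf{z}_k^c),\bar{\mathbf{z}}_{k+1}) \leq M\,\|\mathbf{z}_{k+1}^c-\Phi_k(\mathbf{z}_k^c)\|$. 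Third, condition~\eqref{Dcontadap} applied to the pair $(\mathbf{z}_k^c,\hat{\mathbf{z}}_k^{\mathrm{it}(k)})$ gives $D_\psi(\Phi_k(\mathbf{z}_k^c),\bar{\mathbf{z}}_{k+1}) = D_\psi(\Phi_k(\mathbf{z}_k^c),\Phi_k(\hat{\mathbf{z}}_k^{\mathrm{it}(k)})) \leq D_\psi(\mathbf{z}_k^c,\hat{\mathbf{z}}_k^{\mathrm{it}(k)})$. Concatenating these three inequalities closes the gap and delivers~\eqref{result: regret_iTasap}. The delicate point is the second step, where one must be careful that both $\mathbf{z}_{k+1}^c$ and $\Phi_k(\mathbf{z}_k^c)$ lie in a region on which the gradient bounds $M_1$, $M_2$ apply, which is why the constants $M_1$, $M_2$ are introduced separately in the paper.
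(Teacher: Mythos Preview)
Your proposal is correct and uses the same ingredients as the paper's proof: the optimality condition of \eqref{optAdap}, the three-point identity, strong convexity of $D_\psi$, and Young's inequality for part one; and for part two, the Bregman-projection inequality, the contraction condition \eqref{Dcontadap}, and the convexity/gradient-bound argument giving the $M\Vert\mathbf{z}_{k+1}^c-\Phi_k(\mathbf{z}_k^c)\Vert$ term. The only organizational difference is that the paper starts from the $i=0$ instance of \eqref{result: regret_betweeniT}, introduces two auxiliary terms $T_1,T_2$, and then bounds $T_1$ by iterating \eqref{result: regret_betweeniT} backwards from $i=\mathrm{it}(k)-1$ down to $i=1$, whereas you telescope the whole sum $\sum_{i=0}^{\mathrm{it}(k)-1}$ at once and handle the boundary term $D_\psi(\mathbf{z}_k^c,\hat{\mathbf{z}}_k^{\mathrm{it}(k)})\to D_\psi(\mathbf{z}_{k+1}^c,\hat{\mathbf{z}}_{k+1}^0)$ at the end; your route is slightly more direct but otherwise equivalent. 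One small clarification on your closing remark: in the gradient-bound step, $\nabla\psi$ is evaluated only at $\mathbf{z}_{k+1}^c\in\mathcal{S}_{k+1}$ (covered by $M_1$) and at $\bar{\mathbf{z}}_{k+1}=\Phi_k(\hat{\mathbf{z}}_k^{\mathrm{it}(k)})$ (covered by $M_2$); the point $\Phi_k(\mathbf{z}_k^c)$ itself need not lie in any constraint set, since convexity of $\psi$ is used to linearize at $\mathbf{z}_{k+1}^c$, not there.
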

The proof of Lemma \ref{regret_betweeniT} can be found in Appendix \ref{proof: Lemma  regret_betweeniT}. 
We are now in a position to prove the theorem.
\begin{proof}[Proof of Theorem~\ref{theorem_iTadap}]
The proof is similar to the proof of  \cite[Theorem 4]{hall2013dynamical} which derives a regret upper bound for the dynamic mirror descent in the context of online convex optimization. For ease of notation, we employ $\sum \eta_k^{i} $ to refer to the sum of all the step sizes used within the time instant $k$, i.e. to $\sum_{i=0}^{\mathrm{it}(k)-1}\eta_k^{i}$.\\
By Lemma \ref{regret_betweeniT}, (\ref{result: regret_iTasap}) holds true.
Summing  (\ref{result: regret_iTasap})  over $k~=~1,\cdots,T$ yields 
\begin{align}
\label{thm: regret_iTadapPr}
R(T) &= \sum_{k=1}^{T} \min_{0 \leq i \leq \mathrm{it}(k)} f_k (   \mathbf{\hat{z}}_k^{i} )  - \sum_{k=1}^{T}  f_k (\mathbf{z}_{k}^c   )    \\
& \leq \sum_{k=1}^{T} \frac{1}{\sum\eta_k^{i}} \Big(D_\psi \left(  \mathbf{z}_{k}^c , \mathbf{\hat{z}}_k^0 \right) - D_\psi \left( \mathbf{z}_{k+1}^c ,   \mathbf{\hat{z}}_{k+1}^0\right)  \nonumber \\&    \qquad \qquad    +  \frac{G_f^2}{2 \sigma}  \sum  (\eta_k^{i})^2  +         M  \Vert \mathbf{z}_{k+1}^c - \Phi_k\left(\mathbf{z}_{k}^c  \right) \Vert   \Big) \nonumber.
\end{align}
Using~\eqref{thm: non-incr sequences}, i.e. the fact that $\sum\eta_{k+1}^{i} \leq \sum\eta_k^{i}$, we have  
\begin{align}
\label{sumDiffBreg}
&\sum_{k=1}^{T} \frac{1}{\sum\eta_k^{i}} \, \left( D_\psi \left(  \mathbf{z}_{k}^c , \mathbf{\hat{z}}_k^0 \right) - D_\psi \left( \mathbf{z}_{k+1}^c ,   \mathbf{\hat{z}}_{k+1}^0\right) \right)  \\
&\quad= \frac{D_\psi \left(   \mathbf{z}_{1}^c  , \mathbf{\hat{z}}^0_{1} \right)}{\sum\eta_{1}^{i}}  -\frac{D_\psi \left(  \mathbf{z}_{T+1}^c ,  \mathbf{\hat{z}}^0_{T+1} \right)}{\sum\eta_{T}^{i}} \nonumber \\ &\qquad+ D_\psi \left(   \mathbf{z}_{2}^c  ,  \mathbf{\hat{z}}^0_{2} \right)    \left( \frac{1}{\sum \eta_{2}^{i}} -  \frac{1}{\sum\eta_{1}^{i}} \right) + \cdots \nonumber\\& \qquad   +D_\psi \left( \mathbf{z}_{T}^c ,  \mathbf{\hat{z}}^0_{T} \right)   \left( \frac{1}{\sum\eta_{T}^{i}} -  \frac{1}{\sum\eta_{T-1}^{i}}  \right)      \nonumber \\
& \quad\leq \frac{D_{max}}{\sum\eta_{1}^{i}}   + D_{max}  \left( \sum_{k=1}^{T-1} \frac{1}{\sum\eta_{k+1}^{i}} -  \frac{1}{\sum\eta_{k}^{i}}  \right)  =  \frac{D_{max} }{\sum\eta_{T}^{i}}  .  \nonumber 
\end{align}
Moreover,  since $\frac{1}{\sum\eta_{1}^{i}} \leq \cdots \leq \frac{1}{\sum\eta_T^{i}}$, we compute 
\begin{align}
\sum_{k=1}^{T}  \frac{M }{\sum\eta_k^{i}}  & \left\Vert  \mathbf{z}_{k+1}^c \!  - \!  \Phi_k\left(\mathbf{z}_{k}^c \right) \right\Vert  \leq \frac{M }{\sum\eta_T^{i}}     \sum_{k=1}^{T} \left\Vert  \mathbf{z}_{k+1}^c \!  - \!  \Phi_k\left(\mathbf{z}_{k}^c \right) \right\Vert  .
\end{align}
 Hence, substituting the latter  bounds into (\ref{thm: regret_iTadapPr}) yields
\begin{align}
R(T)   \leq    \frac{D_{max}}{\sum \eta_{T}^{i}} &+ \frac{   G_f^2 }{2  \sigma  }\sum_{k=1}^{T}    \frac{\sum    (\eta_k^{i})^2  }{\sum\eta_k^{i}}   \\
 &   +   \frac{ M }{\sum\eta_T^{i}} \sum_{k=1}^{T} \left\Vert   \mathbf{z}_{k+1}^c  -  \Phi_k\left(\mathbf{z}_{k}^c \right) \right\Vert, \nonumber
\end{align}
finishing the proof.
\end{proof}
Note that condition (\ref{BregDistanceNonneg}) can be  satisfied if we choose, for instance, the quadratic Bregman distance (\ref{quadBregm}) with a  weight matrix $P\in \mathbb{S}_{+\!+}^n$  that fulfills (\ref{LMIBregman}). 
We discuss in the following an important implication  of Theorem \ref{theorem_iTadap}. If we execute a single iteration per time instant, i.e.,    set $\mathrm{it}(k)=1$ for all $k> 0$, we get $ \sum_{i=0}^{\mathrm{it}(k)-1} \eta_k^i = \eta_k^0 \eqqcolon \eta_k$  in (\ref{regretGenBound}).  \rev{In this case, the condition \eqref{thm: non-incr sequences} on the step size becomes $\eta_{k+1} \leq \eta_k$ and the regret	bound \eqref{regretGenBound} is as follows} 
 \begin{align}
 \label{regretBoundit1}
R(T)  
&\leq \frac{D_{max}}{\eta_T} + \frac{   G_f^2 }{2  \sigma  }\sum_{k=1}^{T}  \eta_k    + \frac{ M}{\eta_T} \sum_{k=1}^{T} \left\Vert \mathbf{z}_{k+1}^c - \Phi_k\left(\mathbf{z}_{k}^c \right) \right\Vert.
\end{align}
\rev{This regret bound is very similar to the bound derived for the dynamic mirror descent \cite{hall2013dynamical}.} Moreover, by choosing $\eta_k= \frac{1}{\sqrt{T}}$, Algorithm~\ref{alg: eta changes} with a single optimization iteration per time instant \rev{yields 
\begin{align}
\hspace{-0.35cm}	R(T)  \! 
	&\leq\!  \sqrt{T}\left(\!D_{\max} + \frac{   G_{\mathrm{f}}^2 }{2  \sigma  }   +  M \sum_{k=1}^{T} \left\Vert \mathbf{z}_{k+1}^{\mathrm{c}} - \Phi_k\left(\mathbf{z}_{k}^{\mathrm{c}} \right) \right\Vert\! \right) 
\end{align}}
and achieves therefore a  regret bound $\mathcal{O}\big(\sqrt{T}(1+  C_{T})\big)$, \rev{where $C_T$ is defined in (\ref{compSeq}). Furthermore, if the comparator sequence is such that   $ 	C_{T}(\mathbf{z}_{1}^{\mathrm{c}},\cdots, \mathbf{z}_{T}^{\mathrm{c}})= \sum_{k=1}^{T}\left\Vert \mathbf{z}_{k+1}^{\mathrm{c}} - \Phi_k\left(\mathbf{z}_{k}^{\mathrm{c}} \right) \right\Vert =~0$, then  Algorithm \ref{alg: eta changes} achieves in this case the desired regret bound $\mathcal{O}(\sqrt{T})$ and the average regret $R(T)/T$   tends to zero when $T$ goes to infinity. }  %
In our second main result, we specify conditions under which Algorithm~\ref{alg: eta changes} attains the regret bound $\mathcal{O}\big(\sqrt{T}(1+  C_{T})\big)$  as well as GES of the estimation error.
\begin{theorem}
\label{theorem_stab_regret}
Consider  Algorithm \ref{alg: eta changes} with $  \mathrm{j}(k)= \mathrm{it}(k) $   and any comparator sequence $  \left\lbrace  \mathbf{z}_{1}^c,   \mathbf{z}_{2}^c, \dots,   \mathbf{z}_{T}^c \right\rbrace $ with $   \mathbf{z}_{k}^c  \in \mathcal{S}_k$.  Let Assumptions  \ref{ass: S closed}-\ref{ass: Bregman} hold true.  Suppose that the Bregman distance $D_\psi$ satisfies 
 \begin{align}
 \label{BregmaniTvar}
D_\psi(\Phi_k(\mathbf{z}), \Phi_k(\mathbf{\hat{z}})) - D_\psi(\mathbf{z},\mathbf{\hat{z}})    \leq  - c \, \Vert \mathbf{z} - \mathbf{\hat{z}}   \Vert^2 
\end{align} 
 and that $\mathrm{it}(k+1) \leq \mathrm{it}(k)$. Let
\begin{align}
\label{StepiTvar}
\eta_k^{i} =  \frac{\sigma }{L_f} \frac{1}{\,\sqrt{k}}, 
\end{align}
 for all $i=0,\dots, \mathrm{it}(k)-1$ and $k> 0$. Then,  the estimation error is GES and we have that
 \begin{align}
 \label{RegretiTvar}
R(T)    \leq \frac{\sqrt{T}}{\mathrm{it}(T)} \frac{ L_f }{\sigma} \Big(&  D_{max}    +    M      \sum_{k=1}^{T} \left\Vert  \mathbf{z}_{k+1}^c - \Phi_k\left(\mathbf{z}_{k}^c \right)\right\Vert \Big).
\end{align}
\end{theorem}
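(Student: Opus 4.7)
The proof naturally splits into the two claims. For the GES part, my plan is to verify that the hypotheses of Theorem~\ref{prop: Stabadap} are met and invoke it directly. Condition \eqref{BregmaniTvar} is exactly the contraction condition \eqref{DiffDBoundadap} of Theorem~\ref{prop: Stabadap}, and the step-size prescription \eqref{StepiTvar} gives $\eta_k^{i}=\tfrac{\sigma}{L_f}\tfrac{1}{\sqrt{k}}\le \tfrac{\sigma}{L_f}$ for every $k\ge 1$ and $i$, which is exactly \eqref{stepSizeadap}. Hence GES follows with no additional work.

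For the regret bound, notice that if one tried to apply Theorem~\ref{theorem_iTadap} off the shelf, the middle term would contribute an extra $\tfrac{G_f^2}{2\sigma}\sum_{k=1}^T\tfrac{\sum_i(\eta_k^{i})^2}{\sum_i\eta_k^{i}} = \tfrac{G_f^2}{2L_f}\sum_{k=1}^T\tfrac{1}{\sqrt{k}}$, which is absent from \eqref{RegretiTvar}. The plan is therefore to rederive a sharper analogue of Lemma~\ref{regret_betweeniT} that exploits Assumption~\ref{ass: f strngSmooth} (which Theorem~\ref{theorem_iTadap} did not use). Concretely, I would write the first-order optimality condition of the mirror-descent update \eqref{optAdap} at $\mathbf{\hat z}_k^{i+1}$ and test it against an arbitrary $\mathbf{z}_k^c\in\mathcal{S}_k$, then invoke the three-point identity for the Bregman distance to obtain
\begin{align*}
\eta_k^{i}\bigl\langle\nabla f_k(\mathbf{\hat z}_k^{i}),\mathbf{\hat z}_k^{i+1}-\mathbf{z}_k^c\bigr\rangle
\le D_\psi(\mathbf{z}_k^c,\mathbf{\hat z}_k^{i})-D_\psi(\mathbf{z}_k^c,\mathbf{\hat z}_k^{i+1})-D_\psi(\mathbf{\hat z}_k^{i+1},\mathbf{\hat z}_k^{i}).
\end{align*}
Combining the strong-smoothness inequality for $f_k$ at the pair $(\mathbf{\hat z}_k^{i},\mathbf{\hat z}_k^{i+1})$ with the convexity bound $f_k(\mathbf{\hat z}_k^i)-f_k(\mathbf{z}_k^c)\le\nabla f_k(\mathbf{\hat z}_k^i)^\top(\mathbf{\hat z}_k^i-\mathbf{z}_k^c)$ and using $D_\psi(\mathbf{\hat z}_k^{i+1},\mathbf{\hat z}_k^{i})\ge\tfrac{\sigma}{2}\|\mathbf{\hat z}_k^{i+1}-\mathbf{\hat z}_k^{i}\|^2$ from Assumption~\ref{ass: Bregman}, one obtains
\begin{align*}
\eta_k^{i}\bigl(f_k(\mathbf{\hat z}_k^{i+1})-f_k(\mathbf{z}_k^c)\bigr)
\le D_\psi(\mathbf{z}_k^c,\mathbf{\hat z}_k^{i})-D_\psi(\mathbf{z}_k^c,\mathbf{\hat z}_k^{i+1})+\tfrac{\eta_k^{i}L_f-\sigma}{2}\|\mathbf{\hat z}_k^{i+1}-\mathbf{\hat z}_k^{i}\|^2.
\end{align*}
Because \eqref{StepiTvar} enforces $\eta_k^{i}L_f-\sigma\le 0$, the quadratic remainder drops, eliminating the $G_f$ dependence entirely; this is the core technical step of the proof.

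Summing this sharper bound for $i=0,\dots,\mathrm{it}(k)-1$ and using that $\mathbf{\hat z}_k^{i_o(k)}$ minimizes $f_k$ over the generated iterates gives
\begin{align*}
\Bigl(\textstyle\sum_{i}\eta_k^{i}\Bigr)\bigl(\min_i f_k(\mathbf{\hat z}_k^{i})-f_k(\mathbf{z}_k^c)\bigr)\le D_\psi(\mathbf{z}_k^c,\mathbf{\hat z}_k^{0})-D_\psi(\mathbf{z}_k^c,\mathbf{\hat z}_k^{\mathrm{it}(k)}).
\end{align*}
To transition to the next time instant, I would combine the contraction \eqref{BregmaniTvar} (which in particular implies $D_\psi(\Phi_k(\cdot),\Phi_k(\cdot))\le D_\psi(\cdot,\cdot)$) with the Bregman-projection inequality for the warm start in Step~3 of Algorithm~\ref{alg: eta changes}, and a perturbation estimate of the form $D_\psi(\mathbf{z}_{k+1}^c,\Phi_k(\mathbf{\hat z}_k^{\mathrm{it}(k)}))-D_\psi(\Phi_k(\mathbf{z}_k^c),\Phi_k(\mathbf{\hat z}_k^{\mathrm{it}(k)}))\le M\|\mathbf{z}_{k+1}^c-\Phi_k(\mathbf{z}_k^c)\|$ obtained via the definition of $D_\psi$ together with Cauchy--Schwarz and the definitions of $M_1,M_2$. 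This yields $-D_\psi(\mathbf{z}_k^c,\mathbf{\hat z}_k^{\mathrm{it}(k)})\le -D_\psi(\mathbf{z}_{k+1}^c,\mathbf{\hat z}_{k+1}^{0})+M\|\mathbf{z}_{k+1}^c-\Phi_k(\mathbf{z}_k^c)\|$.

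Finally, dividing by $\sum_{i}\eta_k^{i}$, summing over $k=1,\dots,T$, and applying the same Abel-type telescoping argument used in \eqref{sumDiffBreg} (which is valid since $\sum_{i}\eta_{k+1}^{i}\le\sum_{i}\eta_k^{i}$ follows from $\mathrm{it}(k+1)\le\mathrm{it}(k)$ together with $1/\sqrt{k+1}\le 1/\sqrt{k}$) leaves only the two terms $\tfrac{D_{\max}}{\sum_i \eta_T^{i}}$ and $\tfrac{M}{\sum_i \eta_T^{i}}\sum_{k=1}^T\|\mathbf{z}_{k+1}^c-\Phi_k(\mathbf{z}_k^c)\|$. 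Substituting $\sum_{i}\eta_T^{i}=\mathrm{it}(T)\tfrac{\sigma}{L_f\sqrt{T}}$ yields precisely \eqref{RegretiTvar}. The main obstacle is the derivation of the sharpened single-step inequality, since bypassing the $G_f$ contribution is essential to obtain the clean $\sqrt{T}/\mathrm{it}(T)$ dependence and to make the bound shrink as the number of iterations grows.
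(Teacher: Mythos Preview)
Your proposal is correct and follows essentially the same approach as the paper. The paper packages your sharpened single-step inequality $\eta_k^{i}\bigl(f_k(\mathbf{\hat z}_k^{i+1})-f_k(\mathbf{z}_k^c)\bigr)\le D_\psi(\mathbf{z}_k^c,\mathbf{\hat z}_k^{i})-D_\psi(\mathbf{z}_k^c,\mathbf{\hat z}_k^{i+1})$ and the resulting per-step bound as a separate lemma (Lemma~\ref{regret_betweeniT_eta}), and then proceeds with exactly the telescoping and substitution you outline; your direct summation over $i$ is slightly cleaner than the paper's $R_1,R_2$ decomposition but mathematically equivalent.
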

The proof of this result relies on the next lemma. 
\begin{lem}
\label{regret_betweeniT_eta}
Consider  Algorithm \ref{alg: eta changes} with $  \mathrm{j}(k)= \mathrm{it}(k) $   and any comparator sequence  $  \left\lbrace  \mathbf{z}_{1}^c,   \mathbf{z}_{2}^c, \dots,   \mathbf{z}_{T}^c \right\rbrace $ with $   \mathbf{z}_{k}^c  \in \mathcal{S}_k$.  Let Assumptions  \ref{ass: S closed}-\ref{ass: Bregman} hold true. If we choose the step size 
 \begin{align}
\eta_k^{i} =  \frac{\sigma }{L_f} \frac{1}{\,\sqrt{k}}, 
\end{align}
then for a given iteration step $i$ and time $k>0$, we have that 
\begin{align}
\label{result: regret_betweeniT_eta}
 \hspace{-0.3cm} \eta_k^i \left( f_k \left(\mathbf{\hat{z}}_{k}^{i+1}   \right) \! - \!  f_k(\mathbf{z}_{k}^c) \right) \leq     D_\psi \big(\mathbf{z}_{k}^c,\mathbf{\hat{z}}_{k}^i\big) \!    -    \!    D_\psi \big(\mathbf{z}_{k}^c, \mathbf{\hat{z}}_{k}^{i+1} \big)   . 
\end{align}
Moreover, if we choose the Bregman distance $D_\psi$ such that
 \begin{align}
 \label{Dcontadap_eta}
D_\psi(\Phi_k(\mathbf{z}), \Phi_k(\mathbf{\hat{z}})) - D_\psi(\mathbf{z},\mathbf{\hat{z}})    \leq 0, 
\end{align} 
then 
 \begin{align}
 \label{result: regret_iTasap_eta}
&   \min_{0 \leq i \leq \mathrm{it}(k)} f_k (  \mathbf{\hat{z}}_k^{i} )  -  f_k (\mathbf{z}_{k}^c )    \\& \quad\leq \frac{1}{\sum_{i=0}^{\mathrm{it}(k)-1}\eta_k^{i}} \Big( D_\psi \left(  \mathbf{z}_{k}^c , \mathbf{\hat{z}}_k^0 \right) - D_\psi \left( \mathbf{z}_{k+1}^c ,   \mathbf{\hat{z}}_{k+1}^0\right)  \nonumber \\&  \hspace{3cm}+      \!  M \Vert \mathbf{z}_{k+1}^c - \Phi_k\left(\mathbf{z}_{k}^c  \right) \Vert   \Big) \nonumber.
  \end{align}
\end{lem}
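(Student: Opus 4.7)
The plan is to prove the two inequalities in sequence, mirroring the structure of Lemma~\ref{regret_betweeniT} but exploiting the strong smoothness of $f_k$ from Assumption~\ref{ass: f strngSmooth} in order to bound $f_k(\mathbf{\hat{z}}_k^{i+1})$ rather than $f_k(\mathbf{\hat{z}}_k^{i})$ on the left-hand side. First I would combine the smoothness inequality $f_k(\mathbf{\hat{z}}_k^{i+1}) \leq f_k(\mathbf{\hat{z}}_k^{i}) + \nabla f_k(\mathbf{\hat{z}}_k^{i})^\top(\mathbf{\hat{z}}_k^{i+1} - \mathbf{\hat{z}}_k^{i}) + \tfrac{L_f}{2}\|\mathbf{\hat{z}}_k^{i+1} - \mathbf{\hat{z}}_k^{i}\|^2$ with the convexity inequality $f_k(\mathbf{\hat{z}}_k^{i}) \leq f_k(\mathbf{z}_k^c) + \nabla f_k(\mathbf{\hat{z}}_k^{i})^\top(\mathbf{\hat{z}}_k^{i} - \mathbf{z}_k^c)$ to obtain $f_k(\mathbf{\hat{z}}_k^{i+1}) - f_k(\mathbf{z}_k^c) \leq \nabla f_k(\mathbf{\hat{z}}_k^{i})^\top(\mathbf{\hat{z}}_k^{i+1} - \mathbf{z}_k^c) + \tfrac{L_f}{2}\|\mathbf{\hat{z}}_k^{i+1} - \mathbf{\hat{z}}_k^{i}\|^2$.

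Next I would invoke the first-order optimality condition of the mirror-descent update~\eqref{optAdap} with test point $\mathbf{z}_k^c \in \mathcal{S}_k$, namely $\eta_k^i \nabla f_k(\mathbf{\hat{z}}_k^{i})^\top(\mathbf{\hat{z}}_k^{i+1} - \mathbf{z}_k^c) \leq (\nabla\psi(\mathbf{\hat{z}}_k^{i+1}) - \nabla\psi(\mathbf{\hat{z}}_k^{i}))^\top(\mathbf{z}_k^c - \mathbf{\hat{z}}_k^{i+1})$, and rewrite the right-hand side via the standard three-point identity $D_\psi(\mathbf{z}_k^c,\mathbf{\hat{z}}_k^{i}) - D_\psi(\mathbf{z}_k^c,\mathbf{\hat{z}}_k^{i+1}) - D_\psi(\mathbf{\hat{z}}_k^{i+1},\mathbf{\hat{z}}_k^{i})$ from Appendix~\ref{app: Bregman}. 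Multiplying the previous bound by $\eta_k^i$ and substituting produces a residual $\tfrac{\eta_k^i L_f}{2}\|\mathbf{\hat{z}}_k^{i+1} - \mathbf{\hat{z}}_k^{i}\|^2 - D_\psi(\mathbf{\hat{z}}_k^{i+1},\mathbf{\hat{z}}_k^{i})$, which by the strong convexity part of Assumption~\ref{ass: Bregman} is at most $\tfrac{1}{2}(\eta_k^i L_f - \sigma)\|\mathbf{\hat{z}}_k^{i+1} - \mathbf{\hat{z}}_k^{i}\|^2$. Since $\eta_k^i = \tfrac{\sigma}{L_f\sqrt{k}} \leq \tfrac{\sigma}{L_f}$ for every $k \geq 1$, this residual is nonpositive and can be dropped, delivering~\eqref{result: regret_betweeniT_eta}.

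For the second inequality I would sum \eqref{result: regret_betweeniT_eta} over $i = 0,\dots,\mathrm{it}(k)-1$. Because $\eta_k^i$ does not depend on $i$, the sum on the left is at least $\bigl(\min_{0 \leq i \leq \mathrm{it}(k)} f_k(\mathbf{\hat{z}}_k^{i}) - f_k(\mathbf{z}_k^c)\bigr)\sum_{i=0}^{\mathrm{it}(k)-1}\eta_k^i$, while the right-hand side telescopes to $D_\psi(\mathbf{z}_k^c,\mathbf{\hat{z}}_k^{0}) - D_\psi(\mathbf{z}_k^c,\mathbf{\hat{z}}_k^{\mathrm{it}(k)})$. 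To turn the second term into $D_\psi(\mathbf{z}_{k+1}^c,\mathbf{\hat{z}}_{k+1}^{0})$ I would chain three ingredients: (i) contractivity~\eqref{Dcontadap_eta} applied at $(\mathbf{z}_k^c,\mathbf{\hat{z}}_k^{\mathrm{it}(k)})$, which with $\bar{\mathbf{z}}_{k+1} = \Phi_k(\mathbf{\hat{z}}_k^{\mathrm{it}(k)})$ gives $-D_\psi(\mathbf{z}_k^c,\mathbf{\hat{z}}_k^{\mathrm{it}(k)}) \leq -D_\psi(\Phi_k(\mathbf{z}_k^c),\bar{\mathbf{z}}_{k+1})$; (ii) the generalized Pythagorean inequality for the Bregman projection $\mathbf{\hat{z}}_{k+1}^0 = \argmin_{\mathbf{z} \in \mathcal{S}_{k+1}} D_\psi(\mathbf{z},\bar{\mathbf{z}}_{k+1})$, giving $D_\psi(\mathbf{z}_{k+1}^c,\mathbf{\hat{z}}_{k+1}^{0}) \leq D_\psi(\mathbf{z}_{k+1}^c,\bar{\mathbf{z}}_{k+1})$; and (iii) the identity $D_\psi(\mathbf{z}_{k+1}^c,\bar{\mathbf{z}}_{k+1}) - D_\psi(\Phi_k(\mathbf{z}_k^c),\bar{\mathbf{z}}_{k+1}) = \psi(\mathbf{z}_{k+1}^c) - \psi(\Phi_k(\mathbf{z}_k^c)) - \nabla\psi(\bar{\mathbf{z}}_{k+1})^\top(\mathbf{z}_{k+1}^c - \Phi_k(\mathbf{z}_k^c))$, where a convexity bound on $\psi$ together with Cauchy--Schwarz and the definitions of $M_1$, $M_2$ controls the right-hand side by $M\|\mathbf{z}_{k+1}^c - \Phi_k(\mathbf{z}_k^c)\|$. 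Substituting these bounds yields~\eqref{result: regret_iTasap_eta}.

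The main obstacle is coordinating steps (i)--(iii): every inequality flips sign when moved across the subtraction, the first argument of the Bregman distance has to be simultaneously shifted from $\Phi_k(\mathbf{z}_k^c)$ to $\mathbf{z}_{k+1}^c$ and the second argument from $\mathbf{\hat{z}}_k^{\mathrm{it}(k)}$ through $\bar{\mathbf{z}}_{k+1}$ to $\mathbf{\hat{z}}_{k+1}^{0}$, and the constant $M = M_1 + M_2$ must be justified as a uniform bound on $\|\nabla\psi(\mathbf{z}_{k+1}^c) - \nabla\psi(\bar{\mathbf{z}}_{k+1})\|$ using that $\mathbf{z}_{k+1}^c \in \mathcal{S}_{k+1}$ and $\bar{\mathbf{z}}_{k+1} = \Phi_k(\mathbf{\hat{z}}_k^{\mathrm{it}(k)})$. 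Everything else amounts to routine Bregman calculus already used in Lemma~\ref{regret_betweeniT}.
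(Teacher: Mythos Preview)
Your proposal is correct and uses essentially the same ingredients as the paper: the first inequality is derived identically (smoothness + convexity + optimality condition + three-point identity + strong convexity, then drop the nonpositive residual via $\eta_k^i \leq \sigma/L_f$), and the second combines direct telescoping with the contractivity condition~\eqref{Dcontadap_eta}, the Bregman-projection inequality, and the $M$-bound on $D_\psi(\mathbf{z}_{k+1}^c,\bar{\mathbf{z}}_{k+1}) - D_\psi(\Phi_k(\mathbf{z}_k^c),\bar{\mathbf{z}}_{k+1})$. The only organizational difference is that the paper evaluates~\eqref{result: regret_betweeniT_eta} at $i=0$, introduces auxiliary terms $R_1,R_2$, and telescopes inside $R_1$, whereas you telescope first and then handle the time-$k$-to-$k{+}1$ transition; both routes arrive at the same inequality by the same estimates.
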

The proof of Lemma \ref{regret_betweeniT_eta} can be found in Appendix \ref{proof: Lemma  regret_betweeniT_eta}. 
We are now in a position to prove  Theorem~\ref{theorem_stab_regret}.
\begin{proof}[Proof of Theorem~\ref{theorem_stab_regret}]
GES of the estimation error follows, since  $\eta_k^{i}$ in (\ref{StepiTvar}) satisfies  (\ref{stepSizeadap}), i.e.  $\eta_k^{i} \leq \frac{\sigma}{L_f}$.  
Note that, by Lemma \ref{regret_betweeniT_eta}, i.e. (\ref{result: regret_iTasap_eta}),
\begin{align}
R(T) &= \sum_{k=1}^{T} \min_{0 \leq i \leq \mathrm{it}(k) } f_k (   \mathbf{\hat{z}}_k^{i} )  - \sum_{k=1}^{T}  f_k (\mathbf{z}_{k}^c   )    \\
& \leq \sum_{k=1}^{T} \frac{1}{\sum_{i=0}^{\mathrm{it}(k)-1}\eta_k^{i}} \Big(D_\psi \left(  \mathbf{z}_{k}^c , \mathbf{\hat{z}}_k^0 \right) - D_\psi \left( \mathbf{z}_{k+1}^c ,   \mathbf{\hat{z}}_{k+1}^0\right)  \nonumber \\&  \hspace{3.2cm} +        M \Vert \mathbf{z}_{k+1}^c - \Phi_k\left(\mathbf{z}_{k}^c  \right) \Vert   \Big) \nonumber.
\end{align}
Since $\mathrm{it}(k+1) \leq \mathrm{it}(k)$,  we have that
\begin{align}
\label{StepSizeIneq}
\sum_{i=0}^{\mathrm{it}(k)-1}\eta_k^{i}   =  \frac{\sigma}{L_f} \frac{\mathrm{it}(k)}{\sqrt{k}} & \geq  \frac{\sigma}{L_f} \frac{\mathrm{it}(k+1)}{\sqrt{k+1}} = \sum_{i=0}^{\mathrm{it}(k+1)-1}\eta_{k+1}^{i}.
\end{align}
Hence,~\eqref{thm: non-incr sequences} holds and as a consequence, we can derive an upper bound, similar to~\eqref{sumDiffBreg}, to obtain
\begin{align}
\label{comb1}
&\sum_{k=1}^{T} \frac{1}{\sum_{i=0}^{\mathrm{it}(k)-1}\eta_k^{i}} \, \left( D_\psi \left(  \mathbf{z}_{k}^c , \mathbf{\hat{z}}_k^0 \right) - D_\psi \left( \mathbf{z}_{k+1}^c ,   \mathbf{\hat{z}}_{k+1}^0\right) \right)  \\& \leq  \frac{D_{max} }{\sum_{i=0}^{\mathrm{it}(T)-1}\eta_{T}^{i}} =   \frac{D_{max} L_f \sqrt{T}}{ \mathrm{it}(T) \sigma} .  \nonumber 
\end{align}
Moreover, given (\ref{StepSizeIneq}), we have that 
\begin{align}
\label{comb2}
\sum_{k=1}^{T} & \frac{M}{\sum_{i=0}^{\mathrm{it}(k)-1}\eta_k^{i}}    \left\Vert  \mathbf{z}_{k+1}^c - \Phi_k\left(\mathbf{z}_{k}^c \right) \right\Vert \\ & \leq \frac{M }{\sum_{i=0}^{\mathrm{it}(T)-1}\eta_T^{i}}     \sum_{k=1}^{T} \left\Vert  \mathbf{z}_{k+1}^c - \Phi_k\left(\mathbf{z}_{k}^c \right) \right\Vert \nonumber
\\ & =\frac{ML_f \sqrt{T} }{  \mathrm{it}(T) \sigma}     \sum_{k=1}^{T} \left\Vert  \mathbf{z}_{k+1}^c - \Phi_k\left(\mathbf{z}_{k}^c \right) \right\Vert \nonumber.
\end{align}
Combining (\ref{comb1}) and (\ref{comb2})  completes the proof.
\end{proof}
A direct consequence of Theorem \ref{theorem_stab_regret} is that fixing  the number of optimization iterations $\mathrm{it}(k)=\mathrm{it}(k+1) \eqqcolon \mathrm{it}$ \rev{and increasing $\mathrm{it}$ lead to a smaller regret bound. This allows for a trade-off between computational effort and performance}. In fact, if the comparator sequence $  \left\lbrace  \mathbf{z}_{1}^c,   \mathbf{z}_{2}^c, \dots,   \mathbf{z}_{T}^c \right\rbrace$ follows the dynamics described by the a priori estimate operator $\Phi_k$ closely, and if we let $ \mathrm{it} \rightarrow \infty$, then the bound in Theorem \ref{theorem_stab_regret} vanishes and we obtain an algorithm with zero regret, \rev{i.e., $\lim\limits_{\mathrm{it} \rightarrow \infty} R(T)  \rightarrow 0$.} 
\\
We also remark that the condition $\mathrm{it}(k+1) \leq \mathrm{it}(k)$ requires that we employ a smaller or equal number of optimization iterations each time we receive a new measurement. This condition is in line with the intuitive observation that it is  preferable to execute more  iterations at the beginning of the pMHE iteration scheme,
since our regret measure is aggregated over time and thus memorizes initially  poor estimates. 

\begin{table*}[!t]
	\caption{Summary of  results.  We employ the following notation for abbreviation:  $\Delta_{\Phi_k} D_\psi (\mathbf{z},\mathbf{\hat{z}})   \coloneqq D_\psi(\Phi_k(\mathbf{z}), \Phi_k(\mathbf{\hat{z}})) - D_\psi(\mathbf{z},\mathbf{\hat{z}})$ and $ 
		\sum_{i}\eta_k^{i}  \coloneqq \sum_{i=0}^{\mathrm{it}(k)-1}\eta_k^{i}$.}
	\label{Table: results}
	\begin{center}
		{\setlength{\extrarowheight}{4pt}
			\begin{tabular}{ l  l  l  l} 
				\hline 
				Theorem & Assumptions  & Step size &  Result \\ [1ex] 
				\hline  
				Thm. \ref{prop: Stabadap} &   \begin{tabular}{@{}l@{}}   A1 - A4       \\ $  \Delta_{\Phi_k} D_\psi (\mathbf{z},\mathbf{\hat{z}}) \leq  - c \, \Vert \mathbf{z} - \mathbf{\hat{z}}   \Vert^2   $\end{tabular}    & $ \eta_k^i \leq \frac{ \sigma }{L_f}$  &   Stability \\[3ex] \hline  
				Thm. \ref{theorem_iTadap}    &    \begin{tabular}{@{}l@{}}   A1, A2, A4       \\ $  \Delta_{\Phi_k} D_\psi (\mathbf{z},\mathbf{\hat{z}}) \leq 0  $\end{tabular}  &  $ \sum\limits_{i}\eta_{k+1}^{i}   \leq \sum\limits_{i}\eta_k^{i}   $       & Regret: {$\!\begin{aligned}   R(T) 
						\leq & \frac{D_{max}}{\sum_{i}\eta_{T}^{i}} + \frac{   G_f^2 }{2  \sigma  }\sum_{k=1}^{T}    \frac{\sum_{i}   (\eta_k^{i})^2  }{\sum_{i}\eta_k^{i}}      + \frac{M }{\sum_{i}\eta_T^{i}} C_T  
					\end{aligned}$}  \\[3ex]    \hline 
				Thm. \ref{theorem_stab_regret}  &    \begin{tabular}{@{}l@{}}   A1 - A4       \\  $  \Delta_{\Phi_k} D_\psi (\mathbf{z},\mathbf{\hat{z}}) \leq  - c \, \Vert \mathbf{z} - \mathbf{\hat{z}}   \Vert^2   $ \\ $      \mathrm{it}(k+1) \leq \mathrm{it}(k) $ \end{tabular} & $\eta_k^i= \frac{\sigma}{L_f} \frac{1}{\sqrt{k}}  $ &  Stability + Regret: {$\!\begin{aligned} R(T)  
						&\leq \frac{\sqrt{T}}{ \mathrm{it}(T)}  \frac{ L_f}{\sigma } \Big(   D_{max}  +  M     C_T  \Big)     
					\end{aligned}$}   \\[4ex]  \hline 
				Thm. \ref{thm: constantReg}  &  \begin{tabular}{@{}l@{}}   A1 - A5     \\ $  \Delta_{\Phi_k} D_\psi (\mathbf{z},\mathbf{\hat{z}}) \leq  - c \, \Vert \mathbf{z} - \mathbf{\hat{z}}   \Vert^2   $\end{tabular}   &    ${  \eta_k^i }\leq { {\frac{\sigma }{L_f}}}  $  &  \begin{tabular}{@{}l@{}}   Stability + Regret: \\   {$\!\begin{aligned}   
							R(T)   \leq  \frac{L_f}{2}\,  \frac{\alpha^2\,  \beta^2 }{1 -\beta^2  } \Vert   \mathbf{z}_{0}-\mathbf{\bar{z}}_0 \Vert^2   +  \frac{L_f}{2}\,  \frac{ \alpha_c^2 \, \beta_c^2}{1 -\beta_c^2  } \Vert   \mathbf{z}_{0}-\mathbf{z}^c_0 \Vert^2 
						\end{aligned}$} \end{tabular}    \\ [3ex] 
				\hline
		\end{tabular}}
	\end{center}
\end{table*}
 \subsection{Regret with respect to exponentially stable comparator sequences}
 As we mentioned before, in general, there is no requirement that  the comparator sequence converges to the true state. This being said, it is reasonable to restrict the class of comparator sequences to sequences that converge exponentially fast to the true state. 
    We study this case in this subsection by imposing  the following  additional assumption. 
    \begin{ass}[Exponentially stable comparator sequence]  \label{convComp}   The comparator sequence $  \left\lbrace  \mathbf{z}_{1}^c,   \mathbf{z}_{2}^c, \dots,   \mathbf{z}_{T}^c \right\rbrace$ with initial guess $\mathbf{z}_{0}^c$ is generated from a state estimator that yields GES error dynamics. More specifically, there exists   positive constants $\alpha_c \geq 1$ and $0\leq \beta_c <1$ such that 
\begin{align}
 \Vert \mathbf{z}_{k}  - \mathbf{z}_{k}^c \Vert \leq \alpha_c \enskip  \beta_c^{k} \enskip \Vert  \mathbf{z}_{0}  - \mathbf{z}_{0}^c \Vert 
\end{align}
holds for each $0 <  k \leq T$.  Here, $\mathbf{z}_{k}^c = \begin{bmatrix}  x_{k-N}^c \\[0.1cm] \mathbf{\hat{w}}_{k}^c   \end{bmatrix}$.
    \end{ass}
    Notably, when the comparator sequence satisfies the exponential stability assumption, Algorithm~\ref{alg: eta changes} leads to constant  regret, as our next result shows.  
 \begin{theorem}
 \label{thm: constantReg}
Consider  Algorithm \ref{alg: eta changes}  and let Assumptions  \ref{ass: S closed}-\ref{ass: Bregman} hold true. Suppose that a comparator sequence $  \left\lbrace  \mathbf{z}_{1}^c,   \mathbf{z}_{2}^c, \dots,   \mathbf{z}_{T}^c \right\rbrace  $  is generated from a GES  estimator with  
initial guess $\mathbf{z}_{0}^c$, as in Assumption \ref{convComp}.
 If the Bregman distance $D_\psi$ satisfies 
 \begin{align}
 \label{BreAss}
D_\psi(\Phi_k(\mathbf{z}), \Phi_k(\mathbf{\hat{z}})) - D_\psi(\mathbf{z},\mathbf{\hat{z}})  \leq  - c \Vert \mathbf{z} - \mathbf{\hat{z}}   \Vert^2,   
\end{align} 
for all $ \mathbf{z},\mathbf{\hat{z}} \in \mathbb{R}^{(N+1)n}$ and  $\eta_k^i \leq \frac{ \sigma }{L_f} $, then the estimation error is GES and  
\begin{align}
R(T)  & \leq  \frac{L_f}{2}\,   \frac{\alpha^2\,  \beta^2 }{1 -\beta^2  } \Vert   \mathbf{z}_{0}-\mathbf{\bar{z}}_0 \Vert^2  +      \frac{L_f}{2}\,  \frac{ \alpha_c^2 \, \beta_c^2}{1 -\beta_c^2  } \Vert   \mathbf{z}_{0}-\mathbf{z}^c_0 \Vert^2 , 
\end{align}
with  $ \beta \coloneqq \sqrt{ 1 - \frac{2c}{\gamma}} \in [0,1) $ and $\alpha \coloneqq {\sqrt{ \gamma / \sigma}}$.
\end{theorem}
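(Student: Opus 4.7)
The plan is to reduce the regret bound to two exponentially decaying sums, one driven by the stability of the algorithm and one by the convergence of the comparator, and to convert both from state-space distances into cost gaps via the strong smoothness of $f_k$ at its minimizer $\mathbf{z}_k$.

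First, GES of the estimation error is immediate, since the hypotheses here on $D_\psi$ and on the step size $\eta_k^i\leq\sigma/L_f$ coincide with those of Theorem~\ref{prop: Stabadap}, which therefore applies. In particular, the Lyapunov argument leading to~\eqref{GESproperty1} yields
\begin{equation*}
\|\mathbf{z}_k - \mathbf{\hat{z}}_k^{i}\|^2 \leq \alpha^2 \beta^{2k}\|\mathbf{z}_0-\mathbf{\bar{z}}_0\|^2
\end{equation*}
uniformly in $i\in\{0,\ldots,\mathrm{it}(k)\}$, with $\alpha=\sqrt{\gamma/\sigma}$ and $\beta=\sqrt{1-2c/\gamma}\in[0,1)$.

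Second, to convert stage-cost gaps into squared distances I would invoke Assumption~\ref{ass: f convex}, which forces $\nabla f_k(\mathbf{z}_k)=\mathbf{0}$, together with strong smoothness (Assumption~\ref{ass: f strngSmooth}) to obtain the two-sided bound
\begin{equation*}
0 \leq f_k(\mathbf{z})-f_k(\mathbf{z}_k) \leq \tfrac{L_f}{2}\|\mathbf{z}-\mathbf{z}_k\|^2
\end{equation*}
for every $\mathbf{z}$. Applied to $\mathbf{z}=\mathbf{\hat{z}}_k^{i}$ and to $\mathbf{z}=\mathbf{z}_k^c$, together with the triangle inequality on absolute values, this yields
\begin{equation*}
f_k(\mathbf{\hat{z}}_k^{i})-f_k(\mathbf{z}_k^c) \leq \tfrac{L_f}{2}\|\mathbf{\hat{z}}_k^{i}-\mathbf{z}_k\|^2 + \tfrac{L_f}{2}\|\mathbf{z}_k^c-\mathbf{z}_k\|^2
\end{equation*}
for every $i$, and hence the same inequality holds for $\min_{0\leq i\leq\mathrm{it}(k)}f_k(\mathbf{\hat{z}}_k^{i})-f_k(\mathbf{z}_k^c)$, as the minimum is no larger than the value at any fixed index.

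Finally, I would sum this pointwise inequality over $k=1,\dots,T$, substitute the exponential decay from the first step for the algorithm's iterates and the decay from Assumption~\ref{convComp} for the comparator, and close each sum via the geometric-series estimates $\sum_{k=1}^T\beta^{2k}\leq\beta^2/(1-\beta^2)$ and $\sum_{k=1}^T\beta_c^{2k}\leq\beta_c^2/(1-\beta_c^2)$; this delivers exactly the two constant contributions appearing in the claim. The subtlest step is the triangle inequality above: a direct bound on $f_k(\mathbf{\hat{z}}_k^{i})-f_k(\mathbf{z}_k^c)$ via strong smoothness only captures the algorithm's decay, so routing through $|f_k(\cdot)-f_k(\mathbf{z}_k)|$ is precisely what allows the comparator's exponential rate to enter as a second additive geometric sum and thus produce the stated constant regret bound.
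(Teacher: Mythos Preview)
Your proposal is correct and follows essentially the same approach as the paper's own proof: invoke Theorem~\ref{prop: Stabadap} (via~\eqref{GESproperty1}) for the exponential decay of the iterates, use $\nabla f_k(\mathbf{z}_k)=\mathbf{0}$ together with strong smoothness to route through $f_k(\mathbf{z}_k)$ via absolute values, and then close with two geometric series. The only cosmetic difference is that the paper bounds $\min_i f_k(\mathbf{\hat z}_k^i)$ by the value at the specific index $\mathrm{j}(k)$ rather than appealing to the uniform-in-$i$ bound, but this is immaterial.
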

\begin{proof}
In view of Theorem \ref{prop: Stabadap}, GES holds since the Lyapunov function $
 V\big(  \mathbf{z}_{k}, \mathbf{\hat{z}}^{\mathrm{j}(k)}_{k}\big)  = D_\psi\big( \mathbf{z}_{k},\mathbf{\hat{z}}^{\mathrm{j}(k)}_{k}\big)$
 satisfies 
\begin{align}
\Delta V &= V\big( \mathbf{z}_{k},\mathbf{\hat{z}}^{ \mathrm{j}(k)}_{k}\big) - V\big(  \mathbf{z}_{k-1},\mathbf{\hat{z}}^{ \mathrm{j}(k-1)}_{k-1}\big)   \\& \leq -c \big\Vert  \mathbf{z}_{k-1}-\mathbf{\hat{z}}^{\mathrm{j}(k-1)}_{k-1} \big\Vert^2  \nonumber.
\end{align}
In particular, this implies based on (\ref{toreplace})  that 
\begin{align}
 \label{decreaseIT}
 D_\psi\big(    \mathbf{z}_{k},\mathbf{\hat{z}}^{ \mathrm{j}(k)}_{k}  \big) \leq \beta_e^{k} \, D_\psi\big( \mathbf{z}_{0},\mathbf{\bar{z}}_0\big),
\end{align}
where $\beta_e \coloneqq 1 - \frac{2c}{\gamma} \in [0,1)$.  Given that $D_\psi$ is strongly convex and strongly smooth, we have that 
\begin{align} 
\big\Vert  \mathbf{z}_{k} -\mathbf{\hat{z}}^{\mathrm{j}(k )}_{k}     \big\Vert^2
& \leq \frac{2 }{\sigma}\, \beta_e^{k} \, D_\psi\big(    \mathbf{z}_{0},\mathbf{\bar{z}}_0  \big) \leq \frac{\gamma}{\sigma}\, \beta_e^{k} \,  \big\Vert   \mathbf{z}_{0}-\mathbf{\bar{z}}_0   \big\Vert^2. 
\end{align}
With $\beta \coloneqq \sqrt{\beta_e}=\sqrt{1-2c/\gamma} \in [0,1)$ and  $\alpha \coloneqq{\sqrt{ \gamma / \sigma}} \geq 1$, we obtain that
\begin{align} 
\label{lemTh4result}
\big\Vert  \mathbf{z}_{k} -\mathbf{\hat{z}}^{\mathrm{j}(k )}_{k}     \big\Vert
& \leq \alpha \,\beta^{k} \, \Vert   \mathbf{z}_{0}-\mathbf{\bar{z}}_0 \Vert.
\end{align}
The regret can be upper bounded as follows
\begin{align}
R(T) &= \sum_{k=1}^{T} \quad \min_{0 \leq i \leq \mathrm{it}(k) } f_k (  \mathbf{\hat{z}}^{i}_{k} ) - \sum_{k=1}^{T}  \,  f_k (   \mathbf{z}_{k}^c ) \\
 &\leq  \sum_{k=1}^{T} f_k  \big( \mathbf{\hat{z}}^{\mathrm{j}(k )}_{k}   \big)  - \sum_{k=1}^{T}  \,  f_k \big( \mathbf{z}_{k}^c \big). \nonumber
\end{align}
Furthermore, 
\begin{align}
\label{fDiff}
    f_k  \big( \mathbf{\hat{z}}^{\mathrm{j}(k )}_{k}   \big) \! - \!    f_k \big( \mathbf{z}_{k}^c \big) &=  f_k  \big( \mathbf{\hat{z}}^{\mathrm{j}(k )}_{k}   \big)\! - \! f_k  \big( \mathbf{z}_{k}   \big)  +  f_k  \big( \mathbf{z}_{k}   \big) \! -  \!  f_k \big( \mathbf{z}_{k}^c \big)\nonumber \\ 
     & \leq  \big\vert  f_k  \big( \mathbf{\hat{z}}^{\mathrm{j}(k )}_{k}   \big) - f_k  \big( \mathbf{z}_{k}   \big)  \big\vert \nonumber  \\& \hspace{1.5cm}  +  \big\vert     f_k \big( \mathbf{z}_{k}^c \big) -  f_k  \big( \mathbf{z}_{k}   \big) \big\vert. 
\end{align}
By Assumption  \ref{ass: f strngSmooth}, we have that for any  $\mathbf{z} \in \mathbb{R}^{(N+1)n}$ 
\begin{align}
\label{lipTrue}
 f_k ( \mathbf{z}   )  \leq  f_k ( \mathbf{z}_{k}   ) &+\nabla f_k  ( \mathbf{z}_{k}    )^\top   ( \mathbf{z} -\mathbf{z}_{k} )     + \frac{L_f}{2}  \Vert  \mathbf{z}_{k} - \mathbf{z}  \Vert^2.  
\end{align}
Since $f_k$ achieves its minimal value at $ \mathbf{z}_{k} $ by Assumption \ref{ass: f convex}, $\nabla f_k \big( \mathbf{z}_{k}   \big) = 0$ and we obtain in (\ref{lipTrue}) for $\mathbf{z}=\mathbf{\hat{z}}^{\mathrm{j}(k )}_{k}  $ 
\begin{align}
0 \leq f_k \big( \mathbf{\hat{z}}^{\mathrm{j}(k )}_{k}   \big) -  f_k \big( \mathbf{z}_{k}   \big)  \leq \frac{L_f}{2} \big\Vert  \mathbf{z}_{k} -  \mathbf{\hat{z}}^{\mathrm{j}(k )}_{k}   \big\Vert ^2.
\end{align}
Similarly, we have for $\mathbf{z}= \mathbf{z}_{k}^c$    in (\ref{lipTrue})
\begin{align}
0 \leq f_k \big(\mathbf{z}_{k}^c   \big) -  f_k \big( \mathbf{z}_{k}   \big)  \leq \frac{L_f}{2} \big\Vert  \mathbf{z}_{k} - \mathbf{z}_{k}^c   \big\Vert ^2. 
\end{align}
Substituting the latter two inequalities into (\ref{fDiff}) yields
\begin{align}
 f_k  \big( \mathbf{\hat{z}}^{\mathrm{j}(k )}_{k}   \big)  -     f_k \big( \mathbf{z}_{k}^c \big) \leq  \frac{L_f}{2} \big\Vert \mathbf{z}_{k} -  \mathbf{\hat{z}}^{\mathrm{j}(k )}_{k}   \big\Vert ^2 +\frac{L_f}{2} \big\Vert  \mathbf{z}_{k} - \mathbf{z}_{k}^c   \big\Vert ^2. 
\end{align}
By Assumption \ref{convComp} and (\ref{lemTh4result}), we obtain
\begin{align} 
f_k \big( \mathbf{\hat{z}}^{\mathrm{j}(k )}_{k} \big) \! - \! f_k  \big( \mathbf{z}_{k}^c  \big) &    \leq \frac{L_f}{2} \,  \alpha^2 \,\beta^{2k} \, \Vert   \mathbf{z}_{0}-\mathbf{\bar{z}}_0 \Vert^2 \\& \hspace{1cm} +  \frac{L_f}{2} \,  \alpha_c^2 \enskip  \beta_c^{2k} \enskip \Vert  \mathbf{z}_{0}  - \mathbf{z}_{0}^c \Vert^2  \nonumber.
\end{align}
Hence,
\begin{align}
\label{regretKappa}
\hspace{-0.2cm} R(T)  & \!\leq \sum_{k=1}^{T} \!   \frac{L_f}{2}\, \alpha^2 \,\beta^{2k} \, \Vert   \mathbf{z}_{0}-\mathbf{\bar{z}}_0 \Vert^2  \! + \frac{L_f}{2}  \, \alpha_c^2 \enskip  \beta_c^{2k} \enskip \Vert  \mathbf{z}_{0}  - \mathbf{z}_{0}^c \Vert^2  . 
\end{align}
Since  $ \beta \in [0,1)$, $ \beta^2 \in [0,1)$   and we have
\begin{align}
\sum_{k=1}^{T}  (\beta^2)^k = \frac{\beta^2- \beta^{2(T+1)}  }{1 -\beta^2  } \leq \frac{\beta^2 }{1 -\beta^2  }.
\end{align}
Therefore, it holds that
\begin{align}
  \sum_{k=1}^{T}  \frac{L_f}{2}\, \alpha^2 \,\beta^{2k} \, \Vert   \mathbf{z}_{0}-\mathbf{\bar{z}}_0 \Vert^2  &\leq   \frac{L_f}{2}  \Vert   \mathbf{z}_{0}-\mathbf{\bar{z}}_0 \Vert^2 \frac{ \alpha^2\,\beta^2 }{1 -\beta^2 }.
\end{align}
By carrying out a similar analysis for the second sum in (\ref{regretKappa}), the desired regret upper bound can be obtained.
\end{proof}
We summarize the obtained results of the paper in Table~\ref{Table: results}.
 
\section{Simulation results}
\label{sec: sim}
In order to demonstrate the stability and performance properties of the anytime pMHE algorithm, we consider \rev{the following} discrete-time linear system of the form (\ref{system}), where 
\begin{align}
	\label{matricesDis}
	\begin{split}
		A&=\begin{bmatrix}
			0.8831& 0.0078 &0.0022 \\
			0.1150 &0.9563 &0.0028\\
			0.1178& 0.0102 &0.9954
		\end{bmatrix},  \qquad 
		B=\begin{bmatrix}
			0 \\0\\0
		\end{bmatrix}, \\
		C&= \begin{bmatrix} 32.84 &32.84 &32.84 \end{bmatrix} 
	\end{split}
\end{align}
with $(A,C)$ is detectable. \rev{This system is taken from  \cite{sui2014linear}, where the nonlinear model of a well-mixed, constant volume, isothermal batch reactor is  linearized and discretized with a sampling time of $T_s = 0.25$. The associated (continuous-time) nonlinear system can be found in \cite[Section 3]{haseltine2005critical}.}    Given that the states represent concentrations, they are constrained to be nonnegative, i.e., $x_k \geq 0$.  We employ the proposed anytime pMHE scheme introduced in Algorithm \ref{alg: eta changes} with the horizon length of $N=~2$ and designed such that the assumptions and conditions of Theorem~\ref{theorem_stab_regret} are fulfilled. For the a priori estimate, we choose $\mathrm{j}(k)=\mathrm{it}(k)$ and design the observer gain $L$ in (\ref{Adap: warm}) such that the eigenvalues of $A-LC$ are given by $\lambda=\begin{bmatrix}
0.4754 & 0.8497 & 0.9727 \end{bmatrix}$. Moreover, we only consider the first state in the horizon window $\hat{x}_{k-N}$ as decision variable, i.e., we set the stage cost $q$ in (\ref{N>1Unc})  and the model residual $\hat{w}_i$ to be zero. The stage cost $r$ in  (\ref{N>1Unc}) is chosen as $r(x) = \frac{1}{2} \Vert x \Vert_R^2$ with $R=0.01$. The resulting sum of stage costs at time $k$ is  
\begin{align}
\label{examSumStag}
f_k(x) =\frac{1}{2}   \sum_{i=k-N}^{k-1} \left\Vert y_i -  C A^{i-k+N}\, x  \right\Vert_R^2.
\end{align}
Furthermore, we choose  the quadratic Bregman distance $D_\psi (x_1 ,x_2)=\frac{1}{2} \Vert x_1 -x_2 \Vert_P^2$. To satisfy the stability condition~\eqref{BregmaniTvar}, we design the weight matrix $P \succeq 0 $ such that the LMI \eqref{LMIBregman} is satisfied. In addition, we fix the number of iterations $\mathrm{it}(k)$, i.e., $\mathrm{it}(k)=\mathrm{it}(k+1) \eqqcolon \mathrm{it}$.
 The step sizes are chosen as (\ref{StepiTvar}), i.e., $\eta_k^{i} =  \frac{\sigma }{L_f} \frac{1}{\sqrt{k}} $. Here, $\sigma$ denotes the strong convexity parameter of the Bregman distance which is given by $\sigma= \min(\lambda_i(P))$.  The constant $L_f$ is the strong smoothness parameter of $f_k$ defined in (\ref{examSumStag}). It can be computed as 
 \begin{align}
     L_f=  R \sum_{i=k-N}^{k-1}  \left\Vert C A^{i-k+N} \right\Vert^2.
 \end{align}
As our estimate, we select at each time $k$ the iterate $\mathbf{\hat{z}}_k^{i_o(k)}$ with the minimal cost, i.e., $i_o(k)=\mathrm{arg} \min_{0 \leq i \leq \mathrm{it}(k) } f_k (   \mathbf{\hat{z}}^{i}_{k} )$. We compare the obtained stability results with the Luenberger observer designed with the same matrix $L$, as well as with those obtained from Algorithm \ref{alg: optimal pMHE}, where the pMHE scheme is based on solving (\ref{condensedpMHE}). For this estimator, we choose the same design parameters of the anytime pMHE iteration scheme given by $N$, $f_k$, $D_\psi$ and $L$. The resulting estimation errors for each estimation strategy are shown in Figure \ref{Fig:ErrorCompEstm}.  
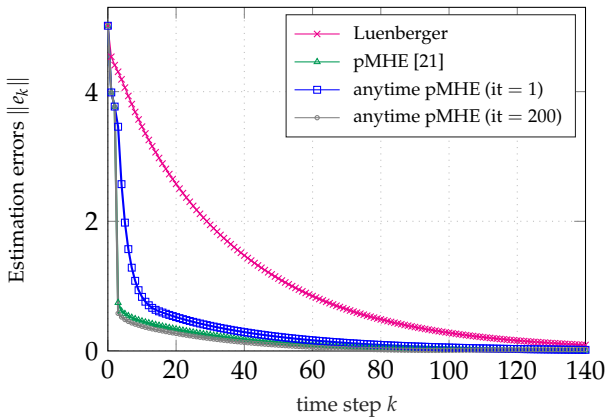
\begin{figure}[ht]
\centering
\begin{tikzpicture}
\begin{axis}[%
width=2.5in,
height=1.8in,
at={(0.758in,0.481in)},
scale only axis,
xmin=0,
xmax=140,
xlabel style={font=\color{white!15!black},scale=0.8},
xlabel={time step $k$ },
ymin=0,
ymax=5.3,
ylabel style={font=\color{white!15!black},scale=0.8},
ylabel={Estimation errors $\Vert e_k \Vert$},
every axis y label/.style={at={(axis description cs:-0.15,.5)},rotate=90,anchor=south,scale=0.8},
axis background/.style={fill=white},
xmajorgrids,
ymajorgrids,
grid style={dotted},
legend style={legend columns=1, column sep=3pt,legend cell align=left, align=left, draw=white!15!black,  font= \scriptsize}
]

     \addplot [each nth point=1, filter discard warning=false, unbounded coords=discard,color=magenta, line width=0.5pt, forget plot]
  table[x=t, y=e_Luen]{Data/e_i1.txt};
  \addplot [each nth point=1, filter discard warning=false, unbounded coords=discard 
,color=magenta, draw=none, mark size=1.7pt, mark=x, mark options={solid,,magenta}]
 table[x=t, y=e_Luen]{Data/e_i1.txt}; 

        \addplot [ each nth point=1, filter discard warning=false, unbounded coords=discard, color=ForestGreen	, line width=0.8pt, forget plot]
  table[x=t, y=e_con]{Data/e_i1.txt};
  \addplot [each nth point=1,color=ForestGreen	, draw=none, mark size=1.3pt, mark=triangle, mark options={solid,ForestGreen}]
  table[x=t, y=e_con]{Data/e_i1.txt};

    \addplot [ each nth point=1, filter discard warning=false, unbounded coords=discard, color=blue	, line width=0.8pt, forget plot]
  table[x=t, y=e_DMD]{Data/e_i1.txt};
  \addplot [each nth point=1,color=blue	, draw=none, mark size=1.3pt, mark=square, mark options={solid,blue}]
  table[x=t, y=e_DMD]{Data/e_i1.txt};

%

    \addplot [ each nth point=1, filter discard warning=false, unbounded coords=discard, color=gray	, line width=0.8pt, forget plot]
  table[x=t, y=e_DMD]{Data/e_i200.txt};
  \addplot [each nth point=1,color=gray	, draw=none, mark size=0.8pt, mark=o, mark options={solid,gray}]
  table[x=t, y=e_DMD]{Data/e_i200.txt};
  


\addlegendentry{Luenberger}
 \addlegendentry{pMHE \cite{gharbi2018proximity}}
 \addlegendentry{anytime pMHE ($\mathrm{it}=1$)}
\addlegendentry{anytime pMHE ($\mathrm{it} =200$)}

%
%
%

\end{axis}
\end{tikzpicture}%
  \caption{The evolution of the estimation
errors corresponding to the employed estimation strategies over time. }
\label{Fig:ErrorCompEstm}
\end{figure}%
All estimators exhibit GES of the estimation errors. This includes the case where we execute only one iteration of the optimization algorithm  per time instant $k$, i.e., $\mathrm{it}=1$. Note that for a small number of iterations, the choice of the observer gain $L$ affects the performance of the estimator. In this case, it is useful to tune $L$ such that a satisfactory performance is attained. Nevertheless, if we perform $\mathrm{it}=200$ iterations for example, the choice of $L$  does not have much impact on performance and we can observe that the iteration scheme performs even better than  Algorithm \ref{alg: optimal pMHE}. We illustrate  the effect of increasing the number of iterations on the convergence of the estimation error in Figure \ref{Fig:ErrorComp}.    We can see that the more we iterate, the faster is the convergence of the estimation error to zero. 
\begin{figure}[ht]
  \centering
\begin{tikzpicture}
\begin{axis}[%
width=2.5in,
height=1.8in,
at={(0.758in,0.481in)},
scale only axis,
xmin=0,
xmax=30,
xlabel style={font=\color{white!15!black},scale=0.8},
xlabel={time step $k$ },
ymin=0,
ymax=5.3,
ylabel style={font=\color{white!15!black},scale=0.8},
ylabel={Estimation errors $\Vert e_k \Vert$},
every axis y label/.style={at={(axis description cs:-0.15,.5)},rotate=90,anchor=south,scale=0.8},
axis background/.style={fill=white},
xmajorgrids,
ymajorgrids,
grid style={dotted},
legend style={legend columns=1, column sep=3pt,legend cell align=left, align=left, draw=white!15!black,  font= \scriptsize}
]

    \addplot [ each nth point=1, filter discard warning=false, unbounded coords=discard, color=blue	, line width=0.8pt, forget plot]
  table[x=t, y=e_DMD]{Data/e_i1.txt};
  \addplot [each nth point=1,color=blue	, draw=none, mark size=1.3pt, mark=square, mark options={solid,blue}]
  table[x=t, y=e_DMD]{Data/e_i1.txt};
     \addplot [each nth point=1, filter discard warning=false, unbounded coords=discard,color=magenta, line width=0.5pt, forget plot]
  table[x=t, y=e_DMD]{Data/e_i2.txt};
  \addplot [each nth point=1, filter discard warning=false, unbounded coords=discard 
,color=magenta, draw=none, mark size=1.7pt, mark=x, mark options={solid,,magenta}]
 table[x=t, y=e_DMD]{Data/e_i2.txt}; 

      \addplot [ each nth point=1, filter discard warning=false, unbounded coords=discard, color=black	, line width=0.8pt, forget plot]
  table[x=t, y=e_DMD]{Data/e_i5.txt};
  \addplot [each nth point=1,color=black	, draw=none, mark size=1.3pt, mark=triangle, mark options={solid,black}]
  table[x=t, y=e_DMD]{Data/e_i5.txt};

        \addplot [ each nth point=1, filter discard warning=false, unbounded coords=discard, color=gray	, line width=0.8pt, forget plot]
  table[x=t, y=e_DMD]{Data/e_i200.txt};
  \addplot [each nth point=1,color=gray	, draw=none, mark size=1.3pt, mark=o, mark options={solid,gray}]
  table[x=t, y=e_DMD]{Data/e_i200.txt};

%

  
 \addlegendentry{$\mathrm{it}=1$}
\addlegendentry{$\mathrm{it}=2$}
  \addlegendentry{$\mathrm{it}=5 $}
\addlegendentry{$\mathrm{it}  =200$}



%
%
%

\end{axis}
\end{tikzpicture}%
\caption{The evolution of the estimation
errors corresponding to  anytime pMHE with different number of iterations  over time.}
\label{Fig:ErrorComp}
\end{figure}
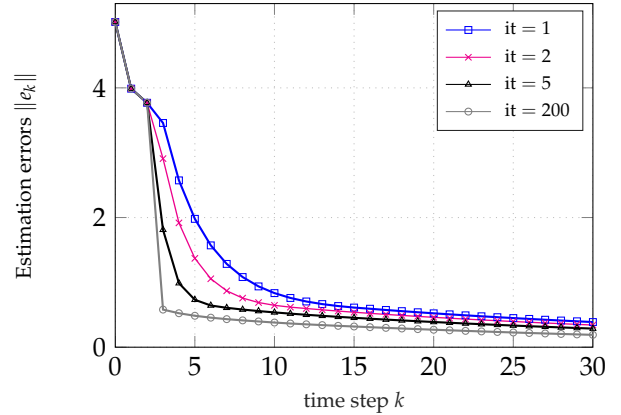
~\\
\rev{We also compare the proposed pMHE iteration scheme with the MHE approach in \cite{alessandri2017fast}, in which single and multiple iterations of descent methods are performed  each time a new measurement becomes available. Given the similarity between the underlying optimization algorithms, we employ the gradient descent for the MHE algorithm in \cite{alessandri2017fast}, which is refered to as GMHE, with $\mathrm{it}\in \mathbb{N}_+$ iterations at each time instant. In the associated cost function, we select the same sum of stage cost (\ref{examSumStag}) as in anytime pMHE. In GMHE, the a priori estimate is set to $\bar{x}_{k-N}=A\, \hat{x}_{k-N-1}^{\mathrm{it}}$. Moreover, GES of the estimation error can be ensured through a suitable condition on the step size used in the iteration step of the gradient descent \cite[Corollary 1]{alessandri2017fast}.  However, for this example, and after performing many numerical tests, we were not able to find a suitable value of the step size that satisfies this condition. Nevertheless, we tested the approach for arbitrary values of the step size and observed convergence of the estimation error to zero.  For these values, we computed the resulting root mean square error (RMSE)
	\begin{align}
		\mathrm{RMSE}=\sqrt{\sum_{k=N}^{T_{\text{sim}}} \frac{\Vert e_k \Vert^2}{T_\text{sim} - N+1}},
	\end{align}
	where $T_{\text{sim}}=100$  denotes the simulation time. For example, if we perform a single iteration per time instant and if the step size is chosen as the pMHE step size, we obtain $1.2694$ for GMHE and $1.0913$ for anytime pMHE. Note that  if we additionally construct the a priori estimate in GMHE based on the Luenberger observer (as is the case in pMHE), we obtain the exact same state estimates. If the step size in GMHE is chosen as $0.01$, the RMSE generated by GMHE becomes smaller than that of anytime pMHE and has the value $0.9602$. However, if we perform $\mathrm{it}=10$ iterations per time instant, pMHE performs better than GMHE ($0.9930$ vs  $ 1.0094$). This again demonstrates that the bias of the Luenberger observer in anytime pMHE is eventually fading away with each iteration. In GMHE, however, increasing the number of iterations to $\mathrm{it}=10$ does not seem to yield an improved performance (in fact it is worse than that with $\mathrm{it}=1$). Summarizing, although the optimization algorithm in both GMHE and anytime pMHE consists of gradient descent steps, a suitable step size in GMHE that ensures exponential stability of the estimation error is not always easy to compute, which is also remarked by the authors in \cite{alessandri2017fast}. Nevertheless, we observed in simulations that employing  the Newton method instead yields much better results for the approach in \cite{alessandri2017fast}. This is due to the fact that the  cost function is quadratic, which implies that the MHE problem is solved after one iteration. Moreover, similar to anytime pMHE, the underlying sufficient condition for stability can easily be fulfilled. Since we do not cover the use of the Newton method in the pMHE algorithm, we omit the carried out comparisons due to space
	constraints.} \\In the following, we investigate for anytime pMHE the regret \eqref{regretNew} with respect to the comparator sequence  given by the true states $x_{k-N}$. Note that  
\begin{align}
f_k(x_{k-N}) =\frac{1}{2}   \sum_{i=k-N}^{k-1} \left\Vert y_i -  C A^{i-k+N}\, x_{k-N}  \right\Vert_R^2 = 0.
\end{align}
We  employ different  number of iterations  for each  pMHE iteration scheme. After each simulation time $T$, we compute and plot the resulting regrets $R(T)$ as well as the average regrets $R(T) /T $  in Figure \ref{Fig:Regret Diff iT} and \ref{Fig:AvrRegret Diff iT}, respectively. Moreover, we plot the regret associated to Algorithm \ref{alg: optimal pMHE} when compared with this optimal sequence of true states.  
\begin{figure}[ht]
  \centering
\begin{tikzpicture}
\begin{axis}[%
width=2.5in,
height=2in,
at={(0.758in,0.481in)},
scale only axis,
xmin=0,
xmax=200,
xlabel style={font=\color{white!15!black},scale=0.8},
xlabel={Simulation time $T$  },
ymode=log,
ylabel style={font=\color{white!15!black},scale=0.8},
ylabel={$R(T)$},
every axis y label/.style={at={(axis description cs:-0.15,.5)},rotate=90,anchor=south,scale=0.8},
axis background/.style={fill=white},
xmajorgrids,
ymajorgrids,
grid style={dotted},
legend style={at={(0.6,0.32)},anchor=south west, legend columns=1, column sep=3pt, align=left, draw=white!15!black,  font= \scriptsize}
]
  \addplot [ each nth point=1, filter discard warning=false, unbounded coords=discard, color=blue	, line width=0.8pt, forget plot]
  table[x=t, y=regret_vec]{Data/T_N_2_iT_1_true.txt};
  \addplot [each nth point=1,color=blue	, draw=none, mark size=1.3pt, mark=square, mark options={solid,blue}]
  table[x=t, y=regret_vec]{Data/T_N_2_iT_1_true.txt};
     \addplot [each nth point=1, filter discard warning=false, unbounded coords=discard,color=magenta, line width=0.5pt, forget plot]
  table[x=t, y=regret_vec]{Data/T_N_2_iT_2_true.txt};
  \addplot [each nth point=1, filter discard warning=false, unbounded coords=discard 
,color=magenta, draw=none, mark size=1.7pt, mark=x, mark options={solid,,magenta}]
 table[x=t, y=regret_vec]{Data/T_N_2_iT_2_true.txt}; 

      \addplot [ each nth point=1, filter discard warning=false, unbounded coords=discard, color=black	, line width=0.8pt, forget plot]
  table[x=t, y=regret_vec]{Data/T_N_2_iT_5_true.txt};
  \addplot [each nth point=1,color=black	, draw=none, mark size=1.3pt, mark=triangle, mark options={solid,black}]
  table[x=t, y=regret_vec]{Data/T_N_2_iT_5_true.txt};
  
        \addplot [ each nth point=1, filter discard warning=false, unbounded coords=discard, color=ForestGreen	, line width=0.8pt, forget plot]
  table[x=t, y=regret_vec]{Data/T_N_2_iT_20_true.txt};
  \addplot [each nth point=1,color=ForestGreen	, draw=none, mark size=1.3pt, mark=triangle, mark options={solid,ForestGreen}]
  table[x=t, y=regret_vec]{Data/T_N_2_iT_20_true.txt};
  
 \addplot [ each nth point=1, filter discard warning=false, unbounded coords=discard, color=gray	, line width=0.8pt, forget plot]
  table[x=t, y=reg_pMHE_vec]{Data/T_N_2_pMHE.txt};
  \addplot [each nth point=1,color=gray	, draw=none, mark size=1.3pt, mark=o, mark options={solid,gray}]
  table[x=t, y=reg_pMHE_vec]{Data/T_N_2_pMHE.txt};
%

%

 \addlegendentry{$\mathrm{it}=1$}
\addlegendentry{$\mathrm{it}=2$}
 \addlegendentry{$\mathrm{it}=5 $}
\addlegendentry{$\mathrm{it}=20 $}
\addlegendentry{pMHE \cite{gharbi2018proximity}}

\end{axis}
\end{tikzpicture}%
\caption{The resulting regrets of anytime pMHE schemes with different number of optimization  iterations.}
\label{Fig:Regret Diff iT}
\end{figure}
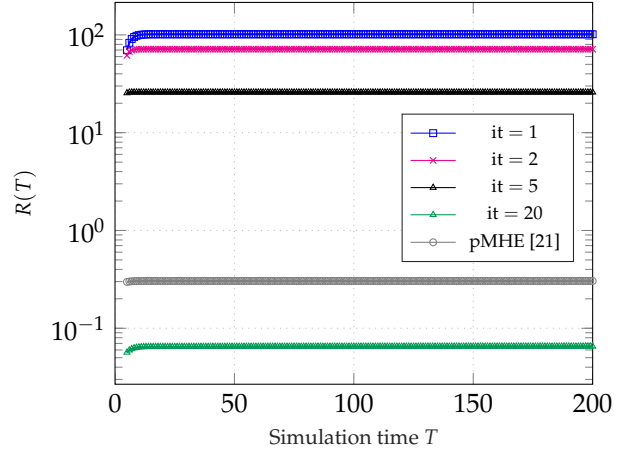
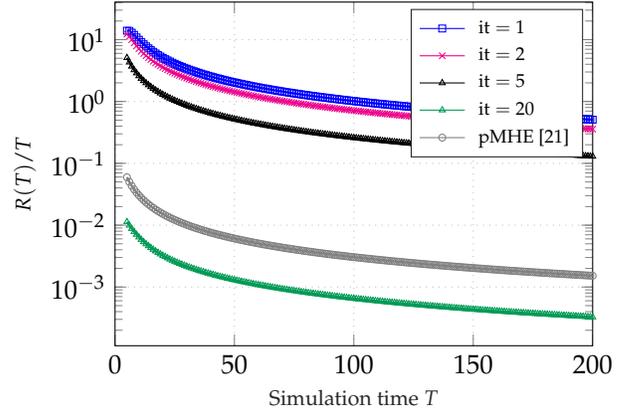
\begin{figure}[ht]
  \centering
\begin{tikzpicture}
\begin{axis}[%
width=2.5in,
height=1.8in,
at={(0.758in,0.481in)},
scale only axis,
xmin=0,
xmax=200,
xlabel style={font=\color{white!15!black},scale=0.8},
xlabel={Simulation time $T$  },
ymode=log,
ylabel style={font=\color{white!15!black},scale=0.8},
ylabel={$R(T) /T$},
every axis y label/.style={at={(axis description cs:-0.15,.5)},rotate=90,anchor=south,scale=0.8},
axis background/.style={fill=white},
xmajorgrids,
ymajorgrids,
grid style={dotted},
legend style={legend columns=1, column sep=3pt,legend cell align=left, align=left, draw=white!15!black,  font= \scriptsize}
]

    \addplot [ each nth point=1, filter discard warning=false, unbounded coords=discard, color=blue	, line width=0.8pt, forget plot]
  table[x=t, y=ave_regret_vec]{Data/T_N_2_iT_1_true.txt};
  \addplot [each nth point=1,color=blue	, draw=none, mark size=1.3pt, mark=square, mark options={solid,blue}]
  table[x=t, y=ave_regret_vec]{Data/T_N_2_iT_1_true.txt};
     \addplot [each nth point=1, filter discard warning=false, unbounded coords=discard,color=magenta, line width=0.5pt, forget plot]
  table[x=t, y=ave_regret_vec]{Data/T_N_2_iT_2_true.txt};
  \addplot [each nth point=1, filter discard warning=false, unbounded coords=discard 
,color=magenta, draw=none, mark size=1.7pt, mark=x, mark options={solid,,magenta}]
 table[x=t, y=ave_regret_vec]{Data/T_N_2_iT_2_true.txt}; 

      \addplot [ each nth point=1, filter discard warning=false, unbounded coords=discard, color=black	, line width=0.8pt, forget plot]
  table[x=t, y=ave_regret_vec]{Data/T_N_2_iT_5_true.txt};
  \addplot [each nth point=1,color=black	, draw=none, mark size=1.3pt, mark=triangle, mark options={solid,black}]
  table[x=t, y=ave_regret_vec]{Data/T_N_2_iT_5_true.txt};
        \addplot [ each nth point=1, filter discard warning=false, unbounded coords=discard, color=ForestGreen	, line width=0.8pt, forget plot]
  table[x=t, y=ave_regret_vec]{Data/T_N_2_iT_20_true.txt};
  \addplot [each nth point=1,color=ForestGreen	, draw=none, mark size=1.3pt, mark=triangle, mark options={solid,ForestGreen}]
  table[x=t, y=ave_regret_vec]{Data/T_N_2_iT_20_true.txt};

 \addplot [ each nth point=1, filter discard warning=false, unbounded coords=discard, color=gray	, line width=0.8pt, forget plot]
  table[x=t, y=ave_regret_pMHE_vec]{Data/T_N_2_pMHE.txt};
  \addplot [each nth point=1,color=gray	, draw=none, mark size=1.3pt, mark=o, mark options={solid,gray}]
  table[x=t, y=ave_regret_pMHE_vec]{Data/T_N_2_pMHE.txt};


  
 \addlegendentry{$\mathrm{it}=1$}
\addlegendentry{$\mathrm{it}=2$}
 \addlegendentry{$\mathrm{it}=5 $}
\addlegendentry{$\mathrm{it}=20 $}
\addlegendentry{pMHE \cite{gharbi2018proximity}}


%
%
%

\end{axis}
\end{tikzpicture}%
\caption{The resulting average regret of anytime pMHE schemes  with different number of optimization  iterations.}
\label{Fig:AvrRegret Diff iT}
\end{figure}%
~\\\rev{We can see that anytime pMHE exhibits a sublinear regret  in Figure \ref{Fig:Regret Diff iT}  and that the average regret $R(T)/T$  tends to zero for $T\rightarrow \infty$  in Figure \ref{Fig:AvrRegret Diff iT}. Note that one could also deduce the qualitative behavior of the average regret directly from Figure~\ref{Fig:Regret Diff iT}, since a sublinear regret charachterized by the regret bound  $\mathcal{O}(\sqrt{T})$ implies for the average regret that $\lim\limits_{T \rightarrow \infty} R(T)/T \rightarrow 0$.} Observe  also that we can achieve lower regrets by increasing the number of iterations. This observation is in line with the regret upper bound (\ref{RegretiTvar}) obtained in Theorem~\ref{theorem_stab_regret}. Moreover, we can see that the regret of the iteration scheme with $\mathrm{it}=20$ is  lower than the regret of Algorithm~\ref{alg: optimal pMHE}, \rev{in which the solution of the optimization problem \eqref{condensedpMHE} is computed at each time instant. This is due to the novel warm-start strategy in the proposed approach; although stability of the pMHE algorithm is induced from the Luenberger observer, it is only used in the a priori estimate to warm-start the optimization algorithm. Hence, its bias is fading away each time we perform the optimization iteration step (\ref{optAdap}) and an improved performance can be achieved with each iteration.  In Algorithm \ref{alg: optimal pMHE}, however, we can see that the solution of \eqref{condensedpMHE} is designed to lie in proximity of the a priori estimate. This implies that the suboptimal bias of the Luenberger observer is present in each internal iteration of the optimization algorithm used to solve the underlying pMHE problem, which indicates that increasing the number of iterations in this case might not yield to a smaller regret.}   In fact, in order to \rev{validate this observation via simulations} and illustrate the impact of Luenbrger observer, we also compute the  regret  $R(T)$ of the pMHE scheme in which, instead of centering the Bregman distance  around the previous iterate (see \eqref{optAdap}), we use  
      \begin{align}
  \label{UpdateWarmCons}
  \mathbf{\hat{z}}^{i+1}_{k}=\argmin_{\quad\mathbf{z}\in \mathcal{S}_k}\left\lbrace  \eta_k^i\,    \nabla f_k\left(  \mathbf{\hat{z}}^{i}_{k} \right)^\top \mathbf{z}+ D_\psi (\mathbf{z},  \mathbf{\bar{z}}_{k}) \right\rbrace .  
   \end{align}
In this case, the Bregman distance is always centered around the current a priori estimate $\mathbf{\bar{z}}_{k}$ given by the Luenbeger observer. The results are depicted in Figure \ref{Fig:Regret Diff iT WarmConst}.   
\begin{figure}[ht]
  \centering
\begin{tikzpicture}
\begin{axis}[%
width=2.5in,
height=1.8in,
at={(0.758in,0.481in)},
scale only axis,
xmin=0,
xmax=200,
xlabel style={font=\color{white!15!black},scale=0.8},
xlabel={Simulation time $T$  },
ylabel style={font=\color{white!15!black},scale=0.8},
ylabel={$R(T)$},
every axis y label/.style={at={(axis description cs:-0.15,.5)},rotate=90,anchor=north,scale=0.8},
axis background/.style={fill=white},
xmajorgrids,
ymajorgrids,
grid style={dotted},
legend style={legend pos=south east, legend columns=1, column sep=3pt, legend cell align=left,  draw=white!15!black,  font= \scriptsize}
]
  \addplot [ each nth point=1, filter discard warning=false, unbounded coords=discard, color=blue	, line width=0.8pt, forget plot]
  table[x=t, y=regret_vec]{Data/Reg_warm_N2_iT_1_true.txt};
  \addplot [each nth point=1,color=blue	, draw=none, mark size=1.3pt, mark=square, mark options={solid,blue}]
  table[x=t, y=regret_vec]{Data/Reg_warm_N2_iT_1_true.txt};
     \addplot [each nth point=1, filter discard warning=false, unbounded coords=discard,color=magenta, line width=0.5pt, forget plot]
  table[x=t, y=regret_vec]{Data/Reg_warm_N2_iT_2_true.txt};
  \addplot [each nth point=1, filter discard warning=false, unbounded coords=discard 
,color=magenta, draw=none, mark size=1.7pt, mark=x, mark options={solid,,magenta}]
 table[x=t, y=regret_vec]{Data/Reg_warm_N2_iT_2_true.txt}; 

      \addplot [ each nth point=1, filter discard warning=false, unbounded coords=discard, color=black	, line width=1.8pt, forget plot]
  table[x=t, y=regret_vec]{Data/Reg_warm_N2_iT_5_true.txt};
  \addplot [each nth point=1,color=black	, draw=none, mark size=1.8pt, mark=o, mark options={solid,black}]
  table[x=t, y=regret_vec]{Data/Reg_warm_N2_iT_5_true.txt};
  
        \addplot [ each nth point=1, filter discard warning=false, unbounded coords=discard, color=ForestGreen	, line width=0.8pt, forget plot]
  table[x=t, y=regret_vec]{Data/Reg_warm_N2_iT_20_true.txt};
  \addplot [each nth point=1,color=ForestGreen	, draw=none, mark size=1.3pt, mark=triangle, mark options={solid,ForestGreen}]
  table[x=t, y=regret_vec]{Data/Reg_warm_N2_iT_20_true.txt};
%
  
 \addlegendentry{$\mathrm{it}=1$}
\addlegendentry{$\mathrm{it}=2$}
 \addlegendentry{$\mathrm{it}=5 $}
\addlegendentry{$\mathrm{it}=20 $}
%


\end{axis}
\end{tikzpicture}%
\caption{The resulting regret for the pMHE scheme with update step (\ref{UpdateWarmCons}) and $\mathrm{it}$ iterations at each time instant $k$. }
\label{Fig:Regret Diff iT WarmConst}
\end{figure}
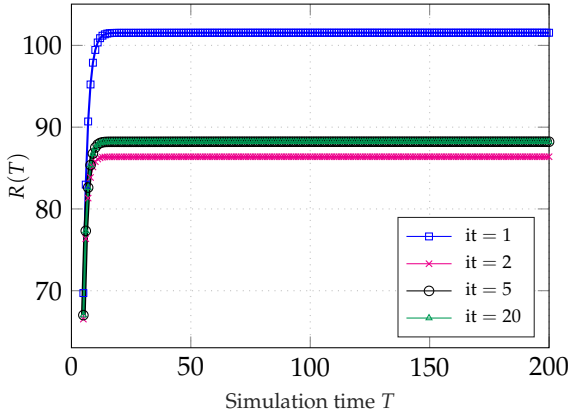
As demonstrated, increasing the number of iterations per time instant in this case does not necessarily yield to lower regrets. 
\section{Conclusion}
\label{sec: conclusion}
In this paper, we presented a computationally tractable approach for constrained MHE of discrete-time linear systems. An anytime pMHE iteration scheme is proposed in which a state estimate at each time instant is computed based on an arbitrary number of optimization algorithm iterations. The underlying optimization algorithm  consists of a mirror descent-like method which generalizes the gradient descent and can therefore be executed quickly. Under suitable assumptions on the Bregman distance and the step sizes, GES of the estimation errors was  established and is ensured after any  number of optimization
algorithm iterations. In addition, the performance of the iteration scheme was characterized by the resulting real-time regret for which upper bounds were derived. The proposed iteration scheme provides stable estimates after each optimization iteration and possesses a sublinear regret which can be rendered 
arbitrarily small by increasing the number of iterations.\\
The proposed anytime pMHE iteration scheme is conceptually related to the anytime model predictive control (MPC) iteration scheme with relaxed barrier
functions~\cite{feller2017stabilizing,feller2018sparsity}, where stabilizing control inputs are generated  after any number of optimization iterations. Our goal in future research is to combine both the MPC and MHE iteration schemes in an overall anytime
estimation-based MPC algorithm. Furthermore, comparisons to  real-time MHE techniques established in the literature and a further exploration of the computational complexity of the proposed algorithm deserve  further research. Moreover, it would be interesting to study the robustness properties of the iteration scheme with respect to process and measurement disturbances.   

\bibliography{bib}
\bibliographystyle{unsrt}

\renewcommand{\thesubsection}{\Alph{subsection}}
\section*{Appendix}
 \subsection{Reformulation of the estimation problem}
\label{app:condensed}
Using the system dynamics (\ref{dnamics}) and (\ref{dnamicsv}), we can write each output residual $\hat{v}_i$ in the estimation window in terms of the decision variable $\mathbf{\hat{z}}_k$ defined in (\ref{eq:z-def}) as follows
\begin{align}
  \hat{v}_i &=  y_i - C \hat{x_i}  \\
  &= y_i - \mathcal{O}_i\, \hat{x}_{k-N} -C \tilde{u}_i -\sum_{j=k-N}^{i-1} \!C A^{i-j-1} \hat{w}_j \nonumber
\end{align}
with $\mathcal{O}_i \coloneqq CA^{i-k+N} $ and $
\tilde{u}_i\coloneqq \sum\limits_{j=k-N}^{i-1} A^{i-j-1} B u_j$. We obtain the sum of stage costs
\begin{align} \label{sumStage}
  f_k\left( \mathbf{\hat{z}}_k \right)& \coloneqq \sum_{i=k-N}^{k-1}  q\left(\hat{w}_i \right) \\ & \quad + r\Big( y_i - \mathcal{O}_i\, \hat{x}_{k-N} -C \tilde{u}_i -\sum_{j=k-N}^{i-1} \!C A^{i-j-1} \hat{w}_j\Big) \nonumber.
\end{align}
The matrices $G$ and $F$ in the constraint set $\mathcal{S}_k$ defined in (\ref{stackedConstr}) are given by
\begin{align}
G&\coloneqq \begin{bmatrix} C_{\text{x}}\\C_{\text{x}}A \\ \vdots \\ C_{\text{x}}A^{N} \end{bmatrix} \in \mathbb{R}^{(N+1)q_{\text{x}} \times n}, \nonumber \\
 F &\coloneqq \begin{bmatrix}
0 & 0 &\hdots & 0\\
C_{\text{x}} & 0 &\hdots & 0\\
C_{\text{x}}A & C_{\text{x}} &\hdots & 0\\
\vdots &\vdots &\hdots & \vdots  \\
C_{\text{x}}A^{N-1}\, & C_{\text{x}}A^{N-2}\,&\hdots  & C_{\text{x}} 
\end{bmatrix} \in \mathbb{R}^{(N+1)q_{\text{x}}\times Nn}
\end{align}
and the vector $E_k$ is 
\begin{align}
 E_k&\coloneqq \begin{bmatrix} d_{\text{x}}\\d_{\text{x}}-C_{\text{x}}\tilde{u}_{k-N+1} \\ \vdots \\ d_{\text{x}}-C_{\text{x}}\tilde{u}_{k} \end{bmatrix}  \in \mathbb{R}^{(N+1)q_{\text{x}}}.
\end{align}
\subsection{Bregman distances}
\label{app: Bregman}
We shortly present central properties of Bregman distances defined in (\ref{BregmanDef}). Given the strong convexity of $\psi$, it follows that $D_\psi(\mathbf{z}_1,\mathbf{z}_2)$ is nonngeative, and that $D_\psi(\mathbf{z}_1,\mathbf{z}_2)=0$ if and only if $\mathbf{z}_1=\mathbf{z}_2$. Moreover, if  $\psi(\mathbf{z})=\frac{1}{2} \Vert \mathbf{z} \Vert^2  $, we obtain $D_\psi(\mathbf{z}_1,\mathbf{z}_2)= \frac{1}{2}\Vert  \mathbf{z}_1 - ~\mathbf{z}_2 \Vert^2 $, which is the quadratic Euclidean distance. In analogy with the classical projection, the Bregman projection $\Pi_{\mathcal{S}}^{\psi} (\mathbf{\bar{z}})$ onto a convex set $\mathcal{S}$ is defined as the closest point in $\mathcal{S}$  to $\mathbf{\bar{z}}$ with respect to the Bregman distance $D_\psi$:
\begin{align}
\label{BregProjection}
    \Pi_{\mathcal{S}}^{\psi} (\mathbf{\bar{z}})= \argmin\limits_{\mathbf{z}\in \mathcal{S}} \quad  D_\psi (\mathbf{z},\mathbf{\bar{z}}).
\end{align}
The next key identity can be proven by directly using the definition of $D_\psi$.
 \begin{lem} 
Let the function $D_{\psi}$ denote a  Bregman distance induced from $\psi$. Then for any $a,b,c \in  \mathbb{R}^{(N+1)n} $, the following three-points identity holds
 \begin{align}
 \label{threePoint}
 D_\psi (c,a)+&D_\psi (a,b) - D_\psi (c,b)  \\&= \left(\nabla \psi (b) - \nabla \psi (a) \right)^\top (c-a). \nonumber
 \end{align}
 \end{lem}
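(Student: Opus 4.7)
The plan is to prove the three-points identity by direct substitution of the definition \eqref{BregmanDef} of the Bregman distance and cancellation. Since $D_\psi$ is defined entirely in terms of $\psi$ and $\nabla\psi$, no convexity or smoothness assumption on $\psi$ is needed; the identity is purely algebraic.

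First, I would write out each of the three Bregman distances explicitly, namely
\begin{align*}
D_\psi(c,a) &= \psi(c) - \psi(a) - (c-a)^\top \nabla\psi(a),\\
D_\psi(a,b) &= \psi(a) - \psi(b) - (a-b)^\top \nabla\psi(b),\\
D_\psi(c,b) &= \psi(c) - \psi(b) - (c-b)^\top \nabla\psi(b).
\end{align*}
Second, I would form the combination $D_\psi(c,a)+D_\psi(a,b)-D_\psi(c,b)$ and observe that the scalar terms involving $\psi(c)$, $\psi(a)$, and $\psi(b)$ telescope and cancel completely. What remains are only the inner-product terms.

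Third, I would collect the surviving inner-product terms, namely $-(c-a)^\top\nabla\psi(a)-(a-b)^\top\nabla\psi(b)+(c-b)^\top\nabla\psi(b)$, and regroup the two terms containing $\nabla\psi(b)$ as $[(c-b)-(a-b)]^\top\nabla\psi(b)=(c-a)^\top\nabla\psi(b)$. This yields $(c-a)^\top[\nabla\psi(b)-\nabla\psi(a)]$, which is exactly the right-hand side of \eqref{threePoint}.

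There is no real obstacle here: the only thing to be careful about is sign bookkeeping in the third step when combining the $\nabla\psi(b)$-terms, since one comes from $D_\psi(a,b)$ with a minus sign (from $-(a-b)^\top\nabla\psi(b)$) and the other from $-D_\psi(c,b)$, which contributes $+(c-b)^\top\nabla\psi(b)$. Once these are aligned, the identity drops out immediately, so the proof fits in a few lines.
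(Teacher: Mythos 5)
Your proof is correct and is exactly the argument the paper intends: the paper only remarks that the identity ``can be proven by directly using the definition of $D_\psi$,'' and your expansion, cancellation of the $\psi(a),\psi(b),\psi(c)$ terms, and regrouping of the $\nabla\psi(b)$ inner products carries that out correctly, including the observation that no convexity of $\psi$ is needed. Nothing to add.
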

 We require the next result from~\cite[Proposition~3.5]{censor1992proximal}.
 \begin{lem}
 \label{lem:bregmanProj}
 Let the set $\mathcal{S} \subset \mathbb{R}^{(N+1)n}$ be nonempty, closed and convex. Suppose $\bar{\mathbf{z}} \notin \mathcal{S} $ and   $\mathbf{z} \in \mathcal{S}$. Then, 
 \begin{align}
  \label{bregmanProj}
     D_\psi \big(\Pi_{\mathcal{S}}^{\psi} (\mathbf{\bar{z}}), \bar{\mathbf{z}} \big) \leq D_\psi \big( \mathbf{z} ,  \bar{\mathbf{z}}  \big) - D_\psi \big( \mathbf{z}, \Pi_{\mathcal{S}}^{\psi} (\mathbf{\bar{z}})\big) .
 \end{align}
 \end{lem}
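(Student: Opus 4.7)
The plan is to exploit the variational characterization of the Bregman projection $\mathbf{p} \coloneqq \Pi_{\mathcal{S}}^{\psi}(\bar{\mathbf{z}})$ and combine it with the three-points identity~\eqref{threePoint}. Writing $D_\psi(\mathbf{z}, \bar{\mathbf{z}})$ out using the definition in~\eqref{BregmanDef}, its gradient with respect to $\mathbf{z}$ equals $\nabla\psi(\mathbf{z}) - \nabla\psi(\bar{\mathbf{z}})$, and since $\psi$ is strongly convex this objective is strongly convex in $\mathbf{z}$. Together with $\mathcal{S}$ being nonempty, closed, and convex, this guarantees that the minimizer $\mathbf{p}$ in~\eqref{BregProjection} exists, is unique, and satisfies the first-order optimality condition
\begin{align}
\bigl(\nabla \psi(\mathbf{p}) - \nabla\psi(\bar{\mathbf{z}})\bigr)^\top (\mathbf{z} - \mathbf{p}) \geq 0 \quad \text{for all } \mathbf{z} \in \mathcal{S}.
\end{align}

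Next, I would instantiate the three-points identity~\eqref{threePoint} with the choice $c = \mathbf{z}$, $a = \mathbf{p}$, $b = \bar{\mathbf{z}}$, yielding
\begin{align}
D_\psi(\mathbf{z}, \mathbf{p}) + D_\psi(\mathbf{p}, \bar{\mathbf{z}}) - D_\psi(\mathbf{z}, \bar{\mathbf{z}}) = \bigl(\nabla\psi(\bar{\mathbf{z}}) - \nabla\psi(\mathbf{p})\bigr)^\top (\mathbf{z} - \mathbf{p}).
\end{align}
The right-hand side is exactly the negative of the quantity appearing in the optimality condition above, so it is nonpositive for every $\mathbf{z} \in \mathcal{S}$. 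Rearranging this inequality yields
\begin{align}
D_\psi(\mathbf{p}, \bar{\mathbf{z}}) \leq D_\psi(\mathbf{z}, \bar{\mathbf{z}}) - D_\psi(\mathbf{z}, \mathbf{p}),
\end{align}
which is precisely~\eqref{bregmanProj}.

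There is no real obstacle here beyond bookkeeping: the result is the classical generalized Pythagorean inequality for Bregman projections, and the only subtle point is making sure the optimality condition is stated correctly for the convex constraint set $\mathcal{S}$ (so that one obtains the variational inequality rather than $\nabla_\mathbf{z} D_\psi(\mathbf{p}, \bar{\mathbf{z}}) = 0$, which would hold only if $\mathbf{p}$ were in the interior). Since we only require the variational inequality in the direction $\mathbf{z} - \mathbf{p}$ for $\mathbf{z} \in \mathcal{S}$, the assumption that $\mathcal{S}$ is closed and convex is exactly what is needed, and the hypothesis $\bar{\mathbf{z}} \notin \mathcal{S}$ is not actually used in the argument (the inequality also holds trivially with equality when $\bar{\mathbf{z}} \in \mathcal{S}$, in which case $\mathbf{p} = \bar{\mathbf{z}}$). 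Hence the proof reduces to two short algebraic steps built on the three-points identity and the first-order optimality condition.
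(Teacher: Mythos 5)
Your proof is correct. Note that the paper does not actually prove this lemma---it imports it verbatim from \cite[Proposition~3.5]{censor1992proximal}---so there is no in-paper argument to compare against; your derivation (first-order variational inequality for the Bregman projection combined with the three-points identity \eqref{threePoint} at $c=\mathbf{z}$, $a=\Pi_{\mathcal{S}}^{\psi}(\bar{\mathbf{z}})$, $b=\bar{\mathbf{z}}$) is precisely the standard proof of that cited result, and your side remarks on existence/uniqueness of the projection and on the hypothesis $\bar{\mathbf{z}}\notin\mathcal{S}$ being dispensable are both accurate.
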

  For more details on Bregman distances, we refer the reader to \cite{censor1992proximal}.
  \subsection{Proof of Lemma \ref{lem: Stabadap}}
  \label{app:stabilityLemma}
   \begin{proof}
   The proof generalizes and follows similar steps as in the proof of  \cite[Proposition 2]{mokhtari2016online}, in which the performance of the online gradient descent method is investigated.  
   Convexity of $f_k$ implies that
   \begin{align}
   f_k(\mathbf{z})  \geq f_k \left( \mathbf{\hat{z}}_{k}^i \right) + \nabla f_k \left( \mathbf{\hat{z}}_{k}^i\right)^\top \left(\mathbf{z} -  \mathbf{\hat{z}}_{k}^i\right)
   \end{align}
   for any $\mathbf{z} \in \mathcal{S}_k$ and hence 
      \begin{align}
      \label{inter1}
   f_k(\mathbf{z})  \geq f_k \left( \mathbf{\hat{z}}_{k}^i \right) &+ \nabla f_k \left( \mathbf{\hat{z}}_{k}^i \right)^\top \left(\mathbf{\hat{z}}_{k}^{i+1} - \mathbf{\hat{z}}_{k}^i\right) \\&  + \nabla f_k \left( \mathbf{\hat{z}}_{k}^i\right)^\top \left( \mathbf{z} -\mathbf{\hat{z}}_{k}^{i+1}  \right). \nonumber
   \end{align}
   By optimality of $\mathbf{\hat{z}}_{k}^{i+1} $ in (\ref{optAdap})
   and by \eqref{BregmanDef},
   we have for any $\mathbf{z} \in \mathcal{S}_k$ 
\begin{align}
\label{toUseinNextLem}
\hspace{-0.5cm}\big( \eta_k^i  \nabla f_k\left(\mathbf{\hat{z}}_{k}^i  \right) \! +\! \nabla \psi\big(\mathbf{\hat{z}}_{k}^{i+1} \big) \!- \!  \nabla \psi \big(\mathbf{\hat{z}}_{k}^i \big) \big)^\top\!\! \left(\mathbf{z} -\mathbf{\hat{z}}_{k}^{i+1}  \right) \geq 0.  
\end{align}
Thus (\ref{inter1}) becomes 
 \begin{align}
 \label{inter2}
   f_k(\mathbf{z})  \geq f_k \left( \mathbf{\hat{z}}_{k}^i \right) & + \nabla f_k \left(\mathbf{\hat{z}}_{k}^i \right)^\top \left(\mathbf{\hat{z}}_{k}^{i+1} - \mathbf{\hat{z}}_{k}^i\right)  \\ &+ \frac{1}{\eta_k^i} \big(  \nabla \psi \big(\mathbf{\hat{z}}_{k}^i\big) - \nabla \psi\big(\mathbf{\hat{z}}_{k}^{i+1} \big)    \big)^\top\, \left(\mathbf{z}-\mathbf{\hat{z}}_{k}^{i+1}  \right) .  \nonumber
   \end{align}
   Since the gradients of $f_k$ are Lipschitz continuous by  Assumption \ref{ass: f strngSmooth}, we have that
   \begin{align}
   f_k \left(\mathbf{\hat{z}}_{k}^{i+1}  \right) \leq   f_k \left( \mathbf{\hat{z}}_{k}^i \right)& + \nabla f_k \left( \mathbf{\hat{z}}_{k}^i \right) ^\top \left(\mathbf{\hat{z}}_{k}^{i+1}-  \mathbf{\hat{z}}_{k}^i\right) \\& + \frac{L_f}{2} \left\Vert \mathbf{\hat{z}}_{k}^{i+1}- \mathbf{\hat{z}}_{k}^i \right\Vert^2,  \nonumber 
   \end{align}
and hence
\begin{align}
    \label{inter3}
  f_k(\mathbf{z}) \geq  f_k \left(\mathbf{\hat{z}}_{k}^{i+1}   \right)& - \frac{L_f}{2} \left\Vert \mathbf{\hat{z}}_{k}^{i+1} -  \mathbf{\hat{z}}_{k}^i \right\Vert^2  \\&\!\!\! +  \frac{1}{\eta_k^i} \big(  \nabla \psi \big(\mathbf{\hat{z}}_{k}^i \big) - \nabla \psi\big( \mathbf{\hat{z}}_{k}^{i+1}  \big)    \big)^\top\, \left(\mathbf{z}-\mathbf{\hat{z}}_{k}^{i+1}  \right).   \nonumber 
   \end{align}
   In view of the three points identity (\ref{threePoint}) and the strong convexity of $D_\psi$, we have
    \begin{align}
\big(  \nabla \psi \big( \mathbf{\hat{z}}_{k}^i\big) &- \nabla \psi\big(\mathbf{\hat{z}}_{k}^{i+1}  \big)    \big)^\top\, \left(\mathbf{z}-\mathbf{\hat{z}}_{k}^{i+1}   \right)\\ \nonumber &= D_\psi \big(\mathbf{z}, \mathbf{\hat{z}}_{k}^{i+1}  \big)+D_\psi \big( \mathbf{\hat{z}}_{k}^{i+1}  ,\mathbf{\hat{z}}_{k}^i\big) - D_\psi \big(\mathbf{z},\mathbf{\hat{z}}_{k}^i\big) \\\nonumber 
& \geq D_\psi \big(\mathbf{z}, \mathbf{\hat{z}}_{k}^{i+1}  \big)+\frac{\sigma}{2} \Vert  \mathbf{\hat{z}}_{k}^{i+1}   - \mathbf{\hat{z}}_{k}^i \Vert^2 - D_\psi \big(\mathbf{z},\mathbf{\hat{z}}_{k}^i\big) .
 \end{align}
Therefore, using (\ref{inter3})
    \begin{align}
  f_k(\mathbf{z})& \geq  f_k \left(\mathbf{\hat{z}}_{k}^{i+1}   \right) - \frac{L_f}{2} \left\Vert \mathbf{\hat{z}}_{k}^{i+1} -  \mathbf{\hat{z}}_{k}^i \right\Vert^2  \nonumber \\&   \enskip+  \frac{1}{\eta_k^i} \big( D_\psi \big(\mathbf{z}, \mathbf{\hat{z}}_{k}^{i+1}  \big)- D_\psi \big(\mathbf{z},\mathbf{\hat{z}}_{k}^i\big) + \frac{\sigma}{2} \Vert   \mathbf{\hat{z}}_{k}^{i+1}   - \mathbf{\hat{z}}_{k}^i \Vert^2 \big)    \nonumber  
  \\  &=    f_k \left(\mathbf{\hat{z}}_{k}^{i+1}   \right) + \frac{1}{2}  \big( \frac{\sigma}{\eta_k^i} - L_f \big) \left\Vert  \mathbf{\hat{z}}_{k}^{i+1}   - \mathbf{\hat{z}}_{k}^i \right\Vert^2   \label{LipschitzForRegret} \\& \enskip +  \frac{1}{\eta_k^i} \big( D_\psi \big(\mathbf{z}, \mathbf{\hat{z}}_{k}^{i+1}  \big)- D_\psi \big(\mathbf{z},\mathbf{\hat{z}}_{k}^i\big)  \big) .  \nonumber
   \end{align}
   We set $\mathbf{z}=\mathbf{z}_k \in \mathcal{S}_k$, i.e. the true state with zero model residual, and obtain
       \begin{align}
       \label{toUseInRegret}
\!\!\!0\geq  f_k( \mathbf{z}_k) - f_k \left(\mathbf{\hat{z}}_{k}^{i+1}   \right)\!   & \geq      \frac{1}{2}  \big( \frac{\sigma}{\eta_k^i} - L_f \big) \left\Vert\mathbf{\hat{z}}_{k}^{i+1}   - \mathbf{\hat{z}}_{k}^i \right\Vert^2 \\& +  \frac{1}{\eta_k^i} \big(  D_\psi \big(\mathbf{z}_k, \mathbf{\hat{z}}_{k}^{i+1}  \big)- D_\psi \big(\mathbf{z}_k,\mathbf{\hat{z}}_{k}^i\big) \big) \nonumber .   
   \end{align}
The inequality $ f_k \left(\mathbf{\hat{z}}_{k}^{i+1}  \right) \geq  f_k( \mathbf{z}_k)    $ holds by  Assumption~\ref{ass: f convex},  which states that $f_k$ achieves its minimal value at $\mathbf{z}_k $.  Hence, we get 
        \begin{align}
        \label{betweenIt}
D_\psi \big(\mathbf{z}_k, \mathbf{\hat{z}}_{k}^{i+1}  \big) & \leq D_\psi \big(\mathbf{z}_k,\mathbf{\hat{z}}_{k}^i\big) +      \frac{\eta_k^i}{2}  \big(L_f -  \frac{\sigma}{\eta_k^i}  \big) \left\Vert \mathbf{\hat{z}}_{k}^{i+1}   - \mathbf{\hat{z}}_{k}^i \right\Vert^2,
   \end{align}
which proves the first statement in Lemma  \ref{lem: Stabadap}. Applying (\ref{betweenIt}) for each  two subsequent iterations $i$ and $i+1$ (where $i=~0,\cdots,j$)  yields
  \begin{align}
  \label{resultwithwarm}
&D_\psi(\mathbf{z}_k, \mathbf{\hat{z}}_{k}^{j})\\ &\leq D_\psi(\mathbf{z}_k,  \mathbf{\hat{z}}_{k}^{j-1} ) \!+\!  \frac{1 }{2}  \big(\eta_k^{j-1} \,L_f - \sigma  \big) \!\left\Vert  \mathbf{\hat{z}}_{k}^{j} -  \mathbf{\hat{z}}_{k}^{j-1}  \right\Vert^2 \nonumber \\
&\leq D_\psi(\mathbf{z}_k,  \mathbf{\hat{z}}_{k}^{j-2} )\! +\! \frac{1 }{2}  \big(\eta_k^{j-1} \,L_f - \sigma  \big) \! \left\Vert \mathbf{\hat{z}}_{k}^{j} -\mathbf{\hat{z}}_{k}^{j-1}  \right\Vert^2  \nonumber \\ &\qquad +    \frac{1 }{2}  \big(\eta_k^{j-2} \,L_f - \sigma  \big)\left\Vert  \mathbf{\hat{z}}_{k}^{j-1 } -\mathbf{\hat{z}}_{k}^{j-2}  \right\Vert^2 \nonumber \\
&\leq  \cdots \nonumber \\
&\leq D_\psi(\mathbf{z}_k, \mathbf{\hat{z}}_{k}^{0}) +  \frac{1 }{2}  \sum_{i=0}^{j-1} \big(\eta_k^{i} \,L_f - \sigma  \big)\left\Vert  \mathbf{\hat{z}}_{k}^{i+1} -\mathbf{\hat{z}}_{k}^{i}  \right\Vert^2 \nonumber.
\end{align}
   Since  $\mathbf{\hat{z}}_{k}^0= \Pi_{\mathcal{S}_k}^{\psi}\left(\mathbf{\bar{z}}_{k} \right)$  by \eqref{BregProjection} 
   and   $\mathbf{z}_k \in \mathcal{S}_k$, in view of   (\ref{bregmanProj}) in Lemma \ref{lem:bregmanProj}, we have 
   \begin{align}
 0 \leq  D_\psi \left(\mathbf{\hat{z}}_{k}^0, \mathbf{\bar{z}}_{k}\right) \leq D_\psi \left(\mathbf{z}_k , \mathbf{\bar{z}}_{k} \right) - D_\psi \left(\mathbf{z}_k , \mathbf{\hat{z}}_{k}^0 \right).
   \end{align}
   Thus, $D_\psi \left(\mathbf{z}_k , \mathbf{\hat{z}}_{k}^0 \right) \leq D_\psi \left(\mathbf{z}_k , \mathbf{\bar{z}}_{k} \right) $ and we obtain in (\ref{resultwithwarm})  
  \begin{align}
&D_\psi(\mathbf{z}_k, \mathbf{\hat{z}}_{k}^{j})\\ 
&\leq D_\psi \left(\mathbf{z}_k , \mathbf{\bar{z}}_{k} \right)+  \frac{1 }{2}  \sum_{i=0}^{j-1} \big(\eta_k^{i} \,L_f - \sigma  \big)\left\Vert  \mathbf{\hat{z}}_{k}^{i+1} -\mathbf{\hat{z}}_{k}^{i}  \right\Vert^2 \nonumber.
\end{align} 
   \end{proof}
   \subsection{Proof of Lemma \ref{regret_betweeniT}}
   \label{proof: Lemma  regret_betweeniT}
 \begin{proof}
 The following analysis is based on the convergence proof of the mirror descent algorithm  presented in \cite{beck2003mirror}.
Since  $   \mathbf{z}_{k}^c\in \mathcal{S}_k$, we can evaluate the optimality condition  (\ref{toUseinNextLem}) of $\mathbf{\hat{z}}_{k}^{i+1} $  for $ \mathbf{z}= \mathbf{z}_{k}^c$ to obtain
\begin{align}
\label{optxbar}
\big( \eta_k^i  \nabla f_k\left(\mathbf{\hat{z}}_{k}^i  \right)  + \nabla \psi\big(\mathbf{\hat{z}}_{k}^{i+1} \big) -   \nabla \psi \big(\mathbf{\hat{z}}_{k}^i \big) \big)^\top\, \left(\mathbf{z}_{k}^c -\mathbf{\hat{z}}_{k}^{i+1}  \right) \geq 0.    
\end{align}
Given that $f_k$ is convex, we have 
\begin{subequations}
\label{overall} 
\begin{align}
  \eta_k^i \left(f_k(\mathbf{\hat{z}}_{k}^i) - f_k( \mathbf{z}_{k}^c) \right) &\leq \eta_k^i\nabla f_k\left(\mathbf{\hat{z}}_{k}^i \right)^\top (\mathbf{\hat{z}}_{k}^i -\mathbf{z}_{k}^c )   \\
&=  s_1 +s_2 +s_3,   \nonumber
\end{align}
where 
\begin{align}
s_1 &\coloneqq \big(   \nabla \psi \big(\mathbf{\hat{z}}_{k}^i\big) -\nabla \psi\big(\mathbf{\hat{z}}_{k}^{i+1}\big) -\eta_k^i  \nabla f_k\left(\mathbf{\hat{z}}_{k}^{i} \right) \big)^\top\, \left(\mathbf{z}_{k}^c-\mathbf{\hat{z}}_{k}^{i+1} \right)  \label{rh1}   \\
s_2 &\coloneqq   \big(   \nabla \psi\big(\mathbf{\hat{z}}_{k}^{i+1}\big) - \nabla \psi \big(\mathbf{\hat{z}}_{k}^{i}\big)     \big)^\top\, \left( \mathbf{z}_{k}^c-\mathbf{\hat{z}}_{k}^{i+1}  \right) \label{rh2}    \\
s_3 &\coloneqq \eta_k^i \nabla f_k\left(\mathbf{\hat{z}}_{k}^{i} \right)^\top (\mathbf{\hat{z}}_{k}^{i}  -\mathbf{\hat{z}}_{k}^{i+1} ) \label{rh3}.
\end{align}
\end{subequations}
 By (\ref{optxbar}),  $s_1 \leq 0$. 
Using the three-points identity (\ref{threePoint}) as well as the strong convexity of $D_\psi$ assumed in  Assumption \ref{ass: Bregman}, we have that 
\begin{align} 
\label{inter11}
s_2&= - \big(    \nabla \psi \big(\mathbf{\hat{z}}_{k}^{i}\big)  -  \nabla \psi\big(\mathbf{\hat{z}}_{k}^{i+1}\big)   \big)^\top\, \left(\mathbf{z}_{k}^c-\mathbf{\hat{z}}_{k}^{i+1} \right)
 \\& = D_\psi(\mathbf{z}_{k}^c,\mathbf{\hat{z}}_{k}^{i}) - D_\psi( \mathbf{z}_{k}^c, \mathbf{\hat{z}}_{k}^{i+1}) - D_\psi(\mathbf{\hat{z}}_{k}^{i+1}, \mathbf{\hat{z}}_{k}^{i}) \nonumber\\
& \leq D_\psi( \mathbf{z}_{k}^c,\mathbf{\hat{z}}_{k}^{i}) - D_\psi(\mathbf{z}_{k}^c, \mathbf{\hat{z}}_{k}^{i+1}) - \frac{\sigma}{2} \Vert \mathbf{\hat{z}}_{k}^{i+1} - \mathbf{\hat{z}}_{k}^{i}\Vert^2 \nonumber. 
\end{align}
Moreover, by Young's inequality,
\begin{align}
\label{inter12}
s_3 \leq \eta_k^i  \left( \frac{ \eta_k^i}{2 \sigma} \big\Vert \nabla f_k\left(\mathbf{\hat{z}}_{k}^{i} \right) \big\Vert^2 + \frac{\sigma}{2  \eta_k^i} \big\Vert  \mathbf{\hat{z}}_{k}^{i+1} -\mathbf{\hat{z}}_{k}^{i} \big\Vert^2 \right).
\end{align}
Hence, we obtain the  first statement  of  Lemma \ref{regret_betweeniT} by substituting (\ref{inter11}) and (\ref{inter12}) into (\ref{overall}).
  Evaluating (\ref{result: regret_betweeniT}) for $i=0$ yields  
 \begin{subequations}
 \label{instaFirst}
 \begin{align}
  \eta_k^0 &\left(f_k(\mathbf{\hat{z}}_{k}^0) - f_k( \mathbf{z}_{k}^c) \right) \nonumber\\ &\leq  D_\psi( \mathbf{z}_{k}^c,\mathbf{\hat{z}}_{k}^{0}) - D_\psi(\mathbf{z}_{k}^c, \mathbf{\hat{z}}_{k}^{1}) +   \frac{ ( \eta_k^0)^2}{2 \sigma} \Vert \nabla f_k\left(\mathbf{\hat{z}}_{k}^{0}\right) \Vert^2    
    \\ &=  D_\psi \left(  \mathbf{z}_{k}^c , \mathbf{\hat{z}}_k^0 \right) - D_\psi \left( \mathbf{z}_{k+1}^c ,   \mathbf{\hat{z}}_{k+1}^0\right) +   \frac{ ( \eta_k^0)^2}{2 \sigma} \Vert \nabla f_k\left(\mathbf{\hat{z}}_{k}^{0}\right) \Vert^2  \nonumber\\
   & \quad + T_1 + T_2 , \nonumber
  \end{align}
  where 
  \begin{align}
     T_1 &\coloneqq  D_\psi  \left(  \Phi_k\left(  \mathbf{z}_{k}^c \right)  ,  \mathbf{\bar{z}}_{k+1}  \right)   - D_\psi  \left( \mathbf{z}_{k}^c ,    \mathbf{\hat{z}}_k^1 \right)    \label{newrh3} \\
     T_2 &\coloneqq  D_\psi \left( \mathbf{z}_{k+1}^c ,     \mathbf{\hat{z}}_{k+1}^0 \right)  -D_\psi  \left(   \Phi_k\left( \mathbf{z}_{k}^c \right)   ,     \mathbf{\bar{z}}_{k+1}  \right)    \label{newrh4}.
  \end{align}
  \end{subequations}
  We can compute an upper bound for each of these terms as follows.    
Since $\mathbf{\bar{z}}_{k+1} =\Phi_k\big(  \mathbf{\hat{z}}_k^{\mathrm{it}(k)} \big)$, using (\ref{Dcontadap}), we have
 \begin{align}
 T_1& =        D_\psi  \big( \Phi_k\big(  \mathbf{z}_{k}^c \big)   ,  \Phi_k\big(  \mathbf{\hat{z}}_k^{\mathrm{it}(k)} \big)\big)   - D_\psi  \big( \mathbf{z}_{k}^c ,    \mathbf{\hat{z}}_k^1 \big)  \\
 & \leq    D_\psi  \big(      \mathbf{z}_{k}^c    ,    \mathbf{\hat{z}}_k^{\mathrm{it}(k)} \big)     -D_\psi  \big( \mathbf{z}_{k}^c ,    \mathbf{\hat{z}}_k^1 \big) \nonumber.
 \end{align}
 Moreover, employing (\ref{result: regret_betweeniT}) in Lemma \ref{regret_betweeniT} (as just proved above) for each iteration step  starting from $i=\mathrm{it}(k)-1$  to $i=2$ yields
\begin{align}
\label{toSubstitue1}
T_1&  \leq    
      D_\psi  \big(   \mathbf{z}_{k}^c    ,    \mathbf{\hat{z}}_k^{\mathrm{it}(k) -1}  \big)   +      \frac{  (\eta_k^{\mathrm{it}(k)-1})^2 }{2 \sigma} \big\Vert \nabla f_k \big( \mathbf{\hat{z}}_k^{\mathrm{it}(k) -1} \big) \big\Vert^2 \nonumber   \\
& \enskip +   \eta_k^{\mathrm{it}(k)-1} \big( f_k( \mathbf{z}_{k}^c) -f_k\big(\mathbf{\hat{z}}_k^{\mathrm{it}(k) -1} \big)  \big)   -D_\psi  \left( \mathbf{z}_{k}^c ,    \mathbf{\hat{z}}_k^1 \right) \nonumber\\
  &\leq   D_\psi  \big(  \mathbf{z}_{k}^c     ,    \mathbf{\hat{z}}_k^{\mathrm{it}(k)-2}  \big)   +  \frac{  (\eta_k^{\mathrm{it}(k)-2})^2 }{2 \sigma} \big\Vert \nabla f_k \big( \mathbf{\hat{z}}_k^{\mathrm{it}(k)-2} \big) \big\Vert^2  \nonumber  \\
& \enskip +    \frac{  (\eta_k^{\mathrm{it}(k)-1})^2 }{2 \sigma} \big\Vert \nabla f_k \big(  \mathbf{\hat{z}}_k^{\mathrm{it}(k)-1} \big) \big\Vert^2  \nonumber \\& \enskip +   \eta_k^{\mathrm{it}(k)-2} \big( f_k( \mathbf{z}_{k}^c ) -f_k\big( \mathbf{\hat{z}}_k^{\mathrm{it}(k)-2} \big)  \big) \nonumber \\& \enskip +  \eta_k^{\mathrm{it}(k)-1} \big( f_k(  \mathbf{z}_{k}^c ) -f_k\big( \mathbf{\hat{z}}_k^{\mathrm{it}(k)-1} \big)  \big)   -D_\psi  \left( \mathbf{z}_{k}^c ,    \mathbf{\hat{z}}_k^1 \right) \nonumber \\
  & \leq \dots  \\
    &\leq  \!\!   \sum_{i=1}^{\mathrm{it}(k)-1}   \frac{  (\eta_k^{i})^2 }{2 \sigma} \Vert \nabla f_k \left( \mathbf{\hat{z}}_k^{i} \right) \Vert^2        \! + \! \!\sum_{i=1}^{\mathrm{it}(k)-1} \! \eta_k^{i}   \left( f_k(  \mathbf{z}_{k}^c )  -    f_k(\mathbf{\hat{z}}_k^{i}) \right) \nonumber  .  
 \end{align}
 Given that $\mathbf{\hat{z}}_{k}^0= \Pi_{\mathcal{S}_k}^{\psi}\left(\mathbf{\bar{z}}_{k} \right)$  and   $\mathbf{z}_k^c \in \mathcal{S}_k$,   by (\ref{bregmanProj}) in Lemma~\ref{lem:bregmanProj}, we have
   \begin{align}
 0 \leq  D_\psi \left(\mathbf{\hat{z}}_{k}^0, \mathbf{\bar{z}}_{k}\right) \leq D_\psi \left(\mathbf{z}_k^c , \mathbf{\bar{z}}_{k} \right) - D_\psi \left(\mathbf{z}_k^c , \mathbf{\hat{z}}_{k}^0 \right),
   \end{align}
 for all $k>0$.  Hence, $D_\psi \left(\mathbf{z}_k^c , \mathbf{\hat{z}}_{k}^0 \right) \leq D_\psi \left(\mathbf{z}_k^c , \mathbf{\bar{z}}_{k} \right) $, for all $k>0$, and we obtain 
    \begin{align}
 T_2&= D_\psi \left( \mathbf{z}_{k+1}^c ,     \mathbf{\hat{z}}_{k+1}^0 \right)  -D_\psi  \left(   \Phi_k\left( \mathbf{z}_{k}^c \right)   ,     \mathbf{\bar{z}}_{k+1}  \right)
 \\& \leq D_\psi \left( \mathbf{z}_{k+1}^c ,     \mathbf{\bar{z}}_{k+1} \right)  -D_\psi  \left( \Phi_k\left( \mathbf{z}_{k}^c \right)   ,      \mathbf{\bar{z}}_{k+1} \right)  \nonumber .
 \end{align}
 In addition, using the definition of the Bregman distance and the convexity of $\psi$, we get 
 \begin{align}
 \label{toSubstitue2}
 T_2& \leq  \psi(\mathbf{z}_{k+1}^c )   - \nabla \psi(\mathbf{\bar{z}}_{k+1})^\top (  \mathbf{z}_{k+1}^c  - \mathbf{\bar{z}}_{k+1}) -\psi(\Phi_k\left( \mathbf{z}_{k}^c  \right) ) \nonumber  \\
 & \qquad + \nabla \psi(  \mathbf{\bar{z}}_{k+1})^\top ( \Phi_k\left( \mathbf{z}_{k}^c \right)  -  \mathbf{\bar{z}}_{k+1} )  \nonumber \\
 &= \psi(\mathbf{z}_{k+1}^c ) -\psi(\Phi_k\left( \mathbf{z}_{k}^c  \right) ) - \nabla \psi(\mathbf{\bar{z}}_{k+1})^\top ( \mathbf{z}_{k+1}^c  - \Phi_k\left( \mathbf{z}_{k}^c \right) )  \nonumber \\
 &\leq \nabla  \psi(\mathbf{z}_{k+1}^c )^\top ( \mathbf{z}_{k+1}^c   -\Phi_k\left( \mathbf{z}_{k}^c  \right)) \nonumber \\
 & \qquad - \nabla \psi(\mathbf{\bar{z}}_{k+1})^\top (  \mathbf{z}_{k+1}^c  - \Phi_k\left( \mathbf{z}_{k}^c  \right) ) \nonumber\\
 &\leq M\Vert  \mathbf{z}_{k+1}^c - \Phi_k\left(\mathbf{z}_{k}^c  \right) \Vert .
 \end{align}
Substituting \eqref{toSubstitue1} and \eqref{toSubstitue2} into (\ref{instaFirst}) yields 
\begin{align}
   &\eta_k^0 \left( f_k (  \mathbf{\hat{z}}_k^0 ) -   f_k ( \mathbf{z}_{k}^c ) \right)
    \\&\leq D_\psi \left(  \mathbf{z}_{k}^c , \mathbf{\hat{z}}_k^0 \right) - D_\psi \left( \mathbf{z}_{k+1}^c ,   \mathbf{\hat{z}}_{k+1}^0\right)   +  \frac{(\eta_k^0)^2 }{2  \sigma  }  \Vert\nabla f_k ( \mathbf{\hat{z}}_k^0 )  \Vert^2 
  \nonumber  \\&  +    \sum_{i=1}^{\mathrm{it}(k)-1}   \frac{  (\eta_k^{i})^2 }{2 \sigma} \Vert \nabla f_k \left( \mathbf{\hat{z}}_k^{i} \right) \Vert^2        +  \sum_{i=1}^{\mathrm{it}(k)-1}  \eta_k^{i} \, \left( f_k(  \mathbf{z}_{k}^c )  -    f_k(\mathbf{\hat{z}}_k^{i}) \right)\nonumber \\
   & +  M  \Vert  \mathbf{z}_{k+1}^c - \Phi_k\left(\mathbf{z}_{k}^c  \right) \Vert. \nonumber
  \end{align} 
 Rearranging the above inequality yields
 \begin{align}
&  \sum_{i=0}^{\mathrm{it}(k)-1}\eta_k^{i} ( f_k( \mathbf{\hat{z}}_k^{i})    -  f_k( \mathbf{z}_{k}^c ) )    \\&\leq   D_\psi \left(  \mathbf{z}_{k}^c , \mathbf{\hat{z}}_k^0 \right) - D_\psi \left( \mathbf{z}_{k+1}^c ,   \mathbf{\hat{z}}_{k+1}^0\right)   \nonumber \\& \enskip +    \sum_{i=0}^{\mathrm{it}(k)-1}   \frac{  (\eta_k^{i})^2 }{2 \sigma} \Vert \nabla f_k \left( \mathbf{\hat{z}}_k^{i} \right) \Vert^2 + \!     M \Vert \mathbf{z}_{k+1}^c - \Phi_k\left(\mathbf{z}_{k}^c  \right) \Vert   \nonumber.
  \end{align}
  Since 
$  \min\limits_{0 \leq i \leq \mathrm{it}(k)} f_k (  \mathbf{\hat{z}}_k^{i} )  \,\sum_{i=0}^{\mathrm{it}(k)-1}  \eta_k^{i}    \leq \sum_{i=0}^{\mathrm{it}(k)-1}\eta_k^{i}\, f_k( \mathbf{\hat{z}}_k^{i})  
$, we obtain
 \begin{align}
& \left( \min_{0 \leq i \leq \mathrm{it}(k)} f_k (  \mathbf{\hat{z}}_k^{i} )  -  f_k (\mathbf{z}_{k}^c )\right)\sum_{i=0}^{\mathrm{it}(k)-1}  \eta_k^{i}   \\&\leq   D_\psi \left(  \mathbf{z}_{k}^c , \mathbf{\hat{z}}_k^0 \right) - D_\psi \left( \mathbf{z}_{k+1}^c ,   \mathbf{\hat{z}}_{k+1}^0\right)   \nonumber \\& \enskip  +    \sum_{i=0}^{\mathrm{it}(k)-1}   \frac{  (\eta_k^{i})^2 }{2 \sigma} G_f^2 +    M \Vert \mathbf{z}_{k+1}^c - \Phi_k\left(\mathbf{z}_{k}^c  \right) \Vert   \nonumber.
  \end{align}
  Dividing the latter inequality by $\sum_{i=0}^{\mathrm{it}(k)-1}  \eta_k^{i}  $ yields the desired result.
 \end{proof}  
  \subsection{Proof of Lemma \ref{regret_betweeniT_eta}}
   \label{proof: Lemma  regret_betweeniT_eta}
 \begin{proof}
  Since the gradients of $f_k$ are Lipschitz continuous by Assumption \ref{ass: f strngSmooth}, by (\ref{LipschitzForRegret}),
         \begin{align}
  f_k(\mathbf{z})& \geq     f_k \left(\mathbf{\hat{z}}_{k}^{i+1}   \right) + \frac{1}{2}  \big( \frac{\sigma}{\eta_k^i} - L_f \big) \left\Vert  \mathbf{\hat{z}}_{k}^{i+1}   - \mathbf{\hat{z}}_{k}^i \right\Vert^2    \\& \enskip +  \frac{1}{\eta_k^i} \big( D_\psi \big(\mathbf{z}, \mathbf{\hat{z}}_{k}^{i+1}  \big)- D_\psi \big(\mathbf{z},\mathbf{\hat{z}}_{k}^i\big)  \big),  \nonumber 
   \end{align}
   for all $\mathbf{z} \in \mathcal{S}_k$.    Since the step size satisfies $ \frac{\sigma}{\eta_k^i} - L_f \geq   0   $, we obtain 
       \begin{align}
  f_k(\mathbf{z})& \geq     f_k \left(\mathbf{\hat{z}}_{k}^{i+1}   \right)  +  \frac{1}{\eta_k^i} \big( D_\psi \big(\mathbf{z}, \mathbf{\hat{z}}_{k}^{i+1}  \big)- D_\psi \big(\mathbf{z},\mathbf{\hat{z}}_{k}^i\big)  \big) .   
   \end{align}
      Thus,  for $\mathbf{z}  = \mathbf{z}_{k}^c\in \mathcal{S}_k $, we have that 
         \begin{align}
  \eta_k^i \left( f_k \left(\mathbf{\hat{z}}_{k}^{i+1}   \right) - f_k(\mathbf{z}_{k}^c) \right) \leq     D_\psi \big(\mathbf{z}_{k}^c,\mathbf{\hat{z}}_{k}^i\big)    -       D_\psi \big(\mathbf{z}_{k}^c, \mathbf{\hat{z}}_{k}^{i+1} \big)   .   
   \end{align}
To prove the second statement in Lemma \ref{regret_betweeniT_eta}, suppose we execute $\mathrm{it}(k)$ iterations at the time instant $k$.  Evaluating the latter inequality for $i=0$ yields  
  \begin{subequations}
 \begin{align}
  \eta_k^0 &\left(f_k(\mathbf{\hat{z}}_{k}^1) - f_k( \mathbf{z}_{k}^c) \right)  \\ &\leq  D_\psi( \mathbf{z}_{k}^c,\mathbf{\hat{z}}_{k}^{0}) - D_\psi(\mathbf{z}_{k}^c, \mathbf{\hat{z}}_{k}^{1})  \nonumber 
    \\&=   D_\psi \left(  \mathbf{z}_{k}^c , \mathbf{\hat{z}}_k^0 \right) - D_\psi \left( \mathbf{z}_{k+1}^c ,   \mathbf{\hat{z}}_{k+1}^0\right)   + R_1 + R_2   \nonumber
  \end{align}
  where 
  \begin{align}
      R_1 &\coloneqq    D_\psi  \left(  \Phi_k\left(  \mathbf{z}_{k}^c \right)  ,  \mathbf{\bar{z}}_{k+1}  \right)   - D_\psi  \left( \mathbf{z}_{k}^c ,    \mathbf{\hat{z}}_k^1 \right)    \label{newrh3ad} \\
      R_2 &\coloneqq  D_\psi \left( \mathbf{z}_{k+1}^c ,     \mathbf{\hat{z}}_{k+1}^0 \right)  -D_\psi  \left(   \Phi_k\left( \mathbf{z}_{k}^c \right)   ,     \mathbf{\bar{z}}_{k+1}  \right)    \label{newrh4ad} .
  \end{align}
  \end{subequations}
Again,  we can compute an upper bound for each of these terms as follows.  
Since $\mathbf{\bar{z}}_{k+1} =\Phi_k\big(  \mathbf{\hat{z}}_k^{\mathrm{it}(k)} \big)$, by (\ref{Dcontadap}),
 \begin{align}
 R_1& =        D_\psi  \big( \Phi_k\left(  \mathbf{z}_{k}^c \right)   ,  \Phi_k\big(  \mathbf{\hat{z}}_k^{\mathrm{it}(k)} \big)\big)   - D_\psi  \left( \mathbf{z}_{k}^c ,    \mathbf{\hat{z}}_k^1 \right)  \\
 & \leq    D_\psi  \big(      \mathbf{z}_{k}^c    ,    \mathbf{\hat{z}}_k^{\mathrm{it}(k)} \big)     -D_\psi  \left( \mathbf{z}_{k}^c ,    \mathbf{\hat{z}}_k^1 \right) \nonumber.
 \end{align}
 Moreover, employing (\ref{result: regret_betweeniT_eta}) in Lemma \ref{regret_betweeniT_eta} for each iteration step  starting from $i=\mathrm{it}(k)-1$ to $i=2$ yields
\begin{align}
R_1&  \leq    
      D_\psi  \big(   \mathbf{z}_{k}^c    ,    \mathbf{\hat{z}}_k^{\mathrm{it}(k) -1}  \big)      \\
& \enskip +   \eta_k^{\mathrm{it}(k)-1} \big( f_k( \mathbf{z}_{k}^c) -f_k\big(\mathbf{\hat{z}}_k^{\mathrm{it}(k) } \big)  \big)   -D_\psi  \left( \mathbf{z}_{k}^c ,    \mathbf{\hat{z}}_k^1 \right) \nonumber\\
  &\leq   D_\psi \big(  \mathbf{z}_{k}^c     ,    \mathbf{\hat{z}}_k^{\mathrm{it}(k)-2}  \big)      +   \eta_k^{\mathrm{it}(k)-2} \big( f_k( \mathbf{z}_{k}^c ) -f_k\big( \mathbf{\hat{z}}_k^{\mathrm{it}(k)-1} \big) \big) \nonumber \\& \enskip +  \eta_k^{\mathrm{it}(k)-1} \big( f_k(  \mathbf{z}_{k}^c ) -f_k\big( \mathbf{\hat{z}}_k^{\mathrm{it}(k)} \big)  \big)   -D_\psi  \left( \mathbf{z}_{k}^c ,    \mathbf{\hat{z}}_k^1 \right) \nonumber \\
  & \leq \dots \nonumber \\
    &\leq       \sum_{i=2}^{\mathrm{it}(k) }  \eta_k^{i-1} \, \left( f_k(  \mathbf{z}_{k}^c )  -    f_k(\mathbf{\hat{z}}_k^{i}) \right) \nonumber  .  
 \end{align}
 Note that $R_2=T_2$ in (\ref{newrh4}). Hence, by (\ref{toSubstitue2}), we obtain  
\begin{align}
     \eta_k^0 \left(f_k(\mathbf{\hat{z}}_{k}^1) - f_k( \mathbf{z}_{k}^c) \right) &\leq D_\psi \left(  \mathbf{z}_{k}^c , \mathbf{\hat{z}}_k^0 \right) - D_\psi \left( \mathbf{z}_{k+1}^c ,   \mathbf{\hat{z}}_{k+1}^0\right)     \nonumber  \\&\quad      +  \sum_{i=2}^{\mathrm{it}(k) }  \eta_k^{i-1} \, \left( f_k(  \mathbf{z}_{k}^c )  -    f_k(\mathbf{\hat{z}}_k^{i}) \right)  \nonumber\\
   &\quad +  M \Vert  \mathbf{z}_{k+1}^c - \Phi_k\left(\mathbf{z}_{k}^c  \right) \Vert.
  \end{align} 
 Rearranging the above inequality yields
 \begin{align}
&  \sum_{i=1}^{\mathrm{it}(k)}\eta_k^{i-1} ( f_k( \mathbf{\hat{z}}_k^{i})    -  f_k( \mathbf{z}_{k}^c ) )      \\&= \eta_k^0 \,   f_k ( \mathbf{\hat{z}}_k^1 )    - \eta_k^0 \,  f_k (\mathbf{z}_{k}^c ) + \sum_{i=2}^{\mathrm{it}(k) }  \eta_k^{i-1} \, (f_k( \mathbf{\hat{z}}_k^{i})- f_k( \mathbf{z}_{k}^c) )
   \nonumber \\&\leq   D_\psi \left(  \mathbf{z}_{k}^c , \mathbf{\hat{z}}_k^0 \right) - D_\psi \left( \mathbf{z}_{k+1}^c ,   \mathbf{\hat{z}}_{k+1}^0\right)  + \!   M \Vert \mathbf{z}_{k+1}^c - \Phi_k\left(\mathbf{z}_{k}^c  \right) \Vert   \nonumber.
  \end{align}
  Since  
   \begin{align}
 \min_{0 \leq i \leq \mathrm{it}(k) } f_k (  \mathbf{\hat{z}}_k^{i} ) \enskip   \sum_{i=0}^{\mathrm{it}(k)-1}  \eta_k^{i}  
& \leq  \min_{1 \leq i \leq \mathrm{it}(k) } f_k (  \mathbf{\hat{z}}_k^{i} ) \enskip   \sum_{i=1}^{\mathrm{it}(k)}  \eta_k^{i-1}  
   \nonumber \\&   \leq \sum_{i=1}^{\mathrm{it}(k)}\eta_k^{i-1} \enskip f_k( \mathbf{\hat{z}}_k^{i}),    
  \end{align}
we obtain
 \begin{align}
& \left( \min_{0 \leq i \leq \mathrm{it}(k)} f_k (  \mathbf{\hat{z}}_k^{i} )  -  f_k (\mathbf{z}_{k}^c )\right)\sum_{i=0}^{\mathrm{it}(k)-1}  \eta_k^{i}   \\& \enskip   \leq   D_\psi \left(  \mathbf{z}_{k}^c , \mathbf{\hat{z}}_k^0 \right) - D_\psi \left( \mathbf{z}_{k+1}^c ,   \mathbf{\hat{z}}_{k+1}^0\right)    +     M  \Vert \mathbf{z}_{k+1}^c - \Phi_k\left(\mathbf{z}_{k}^c  \right) \Vert   \nonumber.
  \end{align}
  Dividing the latter inequality by $\sum_{i=0}^{\mathrm{it}(k)-1}  \eta_k^{i}  $ yields the desired result.
 \end{proof}

\end{document}